\newcommand{\benum}{\begin{enumerate}}
\newcommand{\eenum}{\end{enumerate}}
\newcommand{\cdb}[1]{{\color{blue}{#1}}}
\newsavebox\CBox
\newcommand\hcancel[2][0.5pt]{%
  \ifmmode\sbox\CBox{$#2$}\else\sbox\CBox{#2}\fi%
  \makebox[0pt][l]{\usebox\CBox}%  
  \rule[0.5\ht\CBox-#1/2]{\wd\CBox}{#1}}
\renewcommand{\hcancel}[1]{\unskip}
\newcommand{\cbc}[1]{#1}
\newcommand{\crso}[1]{{\color{red}{\hcancel{#1}}}}
\renewcommand{\COMMENT}[2][.4\linewidth]{%
  \leavevmode\hfill\makebox[#1][l]{//~#2}}
\newcolumntype{?}[1]{!{\vrule width #1}}
\numberwithin{equation}{subsection}
\title{Hybrid solver for the radiative transport equation using finite volume and discontinuous Galerkin
\thanks{This manuscript has been supported in part by UT-Battelle, LLC, under Contract No. DE-AC0500OR22725 with the U.S. Department of Energy and by Los Alamos National Laboratory operated by Los Alamos National Security, LLC, for the National Nuclear Security Administration of U.S. Department of Energy  under Contract No. DEAC52-06NA25396. The United States Government retains and the publisher, by accepting the article for publication, acknowledges that the United States Government retains a non-exclusive, paid-up, irrevocable, world-wide license to publish or reproduce the published form of this manuscript, or allow others to do so, for the United States Government purposes. The Department of Energy will provide public access to these results of federally sponsored research in accordance with the DOE Public Access Plan (\texttt{http://energy.gov/downloads/doe-public-access-plan}). % This work was supported by the US Department of Energy through the Los Alamos National Laboratory. .
}}%
\author{Vincent Heningburg\thanks{Mathematics Department, University of Tennessee, Knoxville, TN 37996, USA. (\email{vheningb@vols.utk.edu}).}
\and Cory D. Hauck\thanks{Computational Mathematics Group, Computer Science and Mathematics Division, Oak Ridge National Laboratory, Oak Ridge, TN 37831, USA. (\email{hauckc@ornl.gov}).}}
\begin{document}
%\linenumbers
\allowdisplaybreaks
\maketitle
\begin{abstract}
We propose a hybrid spatial discretization for the radiative transport equation that combines  a second-order discontinuous Galerkin (DG) method and a second-order finite volume (FV) method.  The strategy relies on a simple operator splitting that has been used previously to combine different angular discretizations.  Unlike standard FV methods with upwind fluxes, the hybrid approach is able to accurately simulate problems in scattering dominated regimes.   However, it requires less memory and yields a faster computational time than a uniform DG discretization.   In addition, the underlying splitting allows naturally for hybridization in both space and angle.  Numerical results are given to demonstrate the efficiency of the hybrid approach in the context of discrete ordinate angular discretizations and Cartesian spatial grids.

\end{abstract}
\tableofcontents

\section{Introduction}\label{sec:Intro}
Radiative transport equations (RTEs) are used to describe the movement of particles --- including neutrons \cite{Lewis,Case-Zweifel-1967}, photons \cite{Mihalis-Mihalis-1999,Pomraning-1973}, neutrinos \cite{Buras, Mezzacappa} and charged particles \cite{ZHENGMING1993673}---through a surrounding material medium.  As they pass through the medium, these particles interact with the material via scattering and emission/absorption processes.  The RTE describes the state of the particle system via a scalar density function defined over a position-momentum phase space.

Discontinuous Galerkin (DG) methods  are commonly used in the spatial discretization of the RTE.  In fact, the original method was formulated in \cite{ReedHill} specifically for this purpose.  In the context of radiation transport, one of the main benefits of DG is its ability to accurately approximate the RTE in scattering dominated regimes, while still using upwind fluxes \cite{Guermond, MorelLarsen, adams}.  However, to do so, the method requires additional unknowns per cell when compared to finite volume (FV) and finite difference approaches that rely on the surrounding stencil to obtain high-order.  The resulting increase in memory is usually deemed worth the additional cost since the method is able to give accurate answers without having to resolve mean-free-path lengths.

In this paper, we propose a hybrid discretization strategy that reduces the memory requirement of the usual DG approach, while still accurately simulating the RTE in scattering dominated regimes (with under-resolved meshes).  The strategy, which has been used in  \cite{Crockatt, Coryhybrid} to combine different angular discretizations, is based on a splitting of the RTE  into two components: one with a scattering source (the collided equation) and one without (the uncollided equation) \cite{Alcouffe}.  This splitting carries with it the flexibility to discretize the phase space in each component equation separately. We choose to discretize the collided equation in space with DG, since it handles scattering well,  and the uncollided equation with FV, since it uses fewer degrees of freedom.  The  result is a hybrid discretization that uses less memory, but still behaves well in scattering dominated regimes. 
For purposes of illustration, we implement the spatial hybrid scheme in combination with a discrete ordinate angular discretization.  In this context we show that, under reasonable assumptions, the spatial hybrid captures numerically the diffusion limit, which is realized asymptotically in the  limit of infinite scattering.  We show how the spatial hybrid can be used in conjunction with existing angular hybrid methods, by combining it with the discrete ordinate hybrid from \cite{Crockatt}.   Finally, we demonstrate that the spatial hybrid is more efficient, in terms of memory usage and time to solution, than the standard DG discretization of the transport equation.
%In this paper, we look at a hybrid discretization of the linear transport equation using two, perhaps different, sets of discrete ordinates to approximate the solution of the aforementioned uncollided and collided equations. In addition to this level of discretization, we propose using a second order finite volume (FV) scheme to discretize the spatial components of the uncollided equations, and a second order discontinuous Galerkin (DG) scheme to discretize the spatial components of the collided equations. We analyze the scaled versions of these equations to show that this method preserves the asymptotic diffusion limit as the scaling parameter goes to zero for all family of uniform partitions with mesh sizes going to zero. We also show that this method is more efficient than the standard DG discretization and the hybrid method using DG in both collided and uncollided equations, in terms of memory usage and computational time, while giving similar results.

The remainder of the paper is organized as follows. In Section \ref{sec:Background}, we briefly recall the  linear RTE, the standard discrete ordinate method, and the diffusion limit. In Section \ref{sec:Hybrid}, we introduce the hybrid method and discuss its implementation. We also give a formal proof (in the steady-state setting) that shows the method captures the diffusion limit.  In Section \ref{sec:Results}, we investigate the diffusion limit numerically and then demonstrate the efficiency of the hybrid approach on two benchmark problems.   A summary and conclusions are given in Section \ref{sec:Conclude}. In an appendix, we provide details on operation counts and memory usage, as well as some specifics of the finite volume reconstruction.

\section{Background}\label{sec:Background}
%We briefly recall the  linear RTE, the standard discrete ordinate method, and the diffusion limit
\subsection{Linear transport equation}\label{sec:RTE}
%\cdb{Subsection: State the transport equation and define the notation. Equation is in scaled form to transition into a brief discussion of the diffusion limit.}\\
%\cdb{Part1: Setup of equation (Domain and relevant spaces.). }\\
Let $X\subset \bbR^3$ be an open domain with Lipschitz continuous boundary $\partial X$; let $\bbS^2$ be the unit sphere in $\bbR^3$; and let $\Gamma:=X\times\sph$. Denote points in $X$ by $r=(x,y,z)$ and points in $\sph$ by $\Omega=(\Omega_x,\Omega_y,\Omega_z)$, and define the following sets:
$$\partial \Gamma=\partial X\times\bbS^2,\quad\partial \Gamma^{\pm}=\{(r,\Omega)\in\partial \Gamma\colon \pm\Omega\cdot n(r)>0\},$$
where $n(r)$ is the unit outward normal vector at points $r\in \partial X$ where the boundary is sufficiently smooth. We consider a scaled, linear radiative transport equation (RTE) \cite{Lewis, Chandra} that models mono-energetic particles moving with unit speed and interacting with a material medium through isotropic scattering and emission/absorption processes.  The complete initial-boundary value problem is
\begin{subequations}
\label{eq:RTE}
\begin{alignat}{3}\label{eq:RTEa}
(\epsilon \p_t + \cL_{\epsilon })\psi(t,r,\Omega)&=(\cS_\epsilon \psi)(t,r)+\epsilon q(t,r,\Omega),\quad&&(t,r,\Omega)\in(0,T)\times\Gamma,\\
\psi(t,r,\Omega)&=\psi_0(r,\Omega),\quad&&(t,r,\Omega)\in \{0\}\times\Gamma,\\
\psi(t,r,\Omega)&=\psi_{\rm{b}}(t,r,\Omega),\quad&&(t,r,\Omega)\in (0,T)\times\partial\Gamma^-,
\end{alignat}
\end{subequations}
where $0<\epsilon < 1$ is a dimensionless parameter,
\begin{align}
&(\cL_{\epsilon }\psi)(t,r,\Omega) 
	= \Omega\cdot\nabla_r\psi(t,r,\Omega)+\frac{\sigma_{\rm{t}}(r)}{\epsilon }\psi(t,r,\Omega)
	 \quad \text{and}	\quad \\
  &(\cS_\epsilon)(r) = \left( \frac{\sigma_{\rm{t}}(r)}{\epsilon }-\epsilon \sigma_{\rm{a}}(r) \right) \cA, \quad \text{with}
  \ \cA v=\frac{1}{4\pi}\int_{\sph}v(\Omega)\ d\Omega,\ \forall\ v\in L^1(\sph).
\end{align}
%\cdb{Part2: Explain notation.}\\
 Here $\psi(t,r,\Omega)$ is the density of particles at time $t$ with respect to the measure $d\Omega dr$.  The initial condition $\psi_0$, boundary data $\psi_{\rm{b}}$, and source $q$ are known functions.  The functions $\sigma_{\rm{t}}$ and $\sigma_{\rm{a}}$, which are independent of $\epsilon$, are the (non-dimensional) total and absorption cross-sections, respectively.    With the above scaling, $\sigs=\sigma_{\rm{t}} -\epsilon^2 \sigma_{\rm{a}}\ge0$ is the (non-dimensional) scattering cross-section.   In particular, \eqref{eq:RTEa} is  scattering dominated when $\epsilon \ll 1$.
% \begin{equation}
%\inf_{r\in X}& \sigma_{\rm{s}}(r) > 0,\quad\inf_{r\in X}& \sigma_{\rm{a}}(r) > 0.
%\end{equation}  
 A discussion of well-posedness for \eqref{eq:RTE} can be found in \cite{Lions} and a discussion of the scaling can be found in \cite{MorelLarsen2}.
\subsection{Discrete ordinates}
Due to its popularity and easy implementation, we use discrete ordinates for angular discretization in the paper.  The discretization is based on a quadrature rule of order $N$ that is defined by a set of $N^*$ discrete angles $\{\dAng{i}{N}\}_{i=1}^{N^*}\subset\bbS^2$ and weights $\{\dWgt{i}{N}\}_{i=1}^{N^*}$. Here $N^* = N^*(N)$ is a positive integer that is  monotonically increasing as a function of $N$; the dependence of $N^*$ on $N$ depends on the specific  rule.  Usually $N^*$ = $O(N^2)$; this will be the case for the quadrature rules used in this paper.  %\cdh{Is what I just said true?}
%At a minimum, we require that
%\begin{equation}\label{wdef}
%\dWgt{i}{N}>0,\quad\forall\ i\ge1\qquad\text{and}\qquad\sum_{i=1}^{N^*}\dWgt{i}{N}=4\pi.
%\end{equation}
%The discrete ordinates $(\nsol{S}_N)$ equations \cite{Lewis, Lathrop, Larsen} approximate the solution of the RTE at a fixed set of points in $\sph$ and approximate any integrals by some quadrature using the same set of points. 

The discrete ordinate approximation of \eqref{eq:RTEa} is a vector-valued function $\bspsi^N = \bspsi^N(t,r)$,  whose components $\psi_i^N$, $1\le i\le N^*$, satisfy  
\begin{subequations}\label{eq:RTESN}
\begin{alignat}{3}
(\epsilon \p_t + L_{\epsilon,i}^N)\psi_i^N(t,r)&=S^N_{\epsilon} \bspsi^N(t,r)+\epsilon q_i^N(t,r),\quad&&(t,r)\in(0,T)\times X,\\
\psi_i^N(t,r)&=\psi_0(r,\dAng{i}{N}),\quad&&(t,r)\in \{0\}\times X,\\
\psi_i^N(t,r)&=\psi_{\rm{b}}(t,r,\dAng{i}{N}),\quad&&(t,r)\in (0,T)\times\partial X_i^{-},
\end{alignat}
\end{subequations}
where %\cdh{why is $L$ with mathcal?  Shouldn't it be regular font, like $S$?}
\begin{align}
\label{eq:SN_op_defs}
&L_{\epsilon,i}^N \psi_i^N(t,r)
	=\dAng{i}{N} \cdot \nabla_r\psi_i^N(t,r)+\frac{\sigt(r)}{\epsilon}\psi_i^N(t,r), \quad\text{and}\quad \\
	\label{eq:SN_op_defs2}
&S^N_{\epsilon} (r)
	=\left( \frac{\sigma_{\rm{t}}(r)}{\epsilon }-\epsilon \sigma_{\rm{a}}(r) \right)A^N,\quad\text{with}\quad A^N\bv=\frac{1}{4\pi}\sum_{i=1}^{N^*}\dWgt{i}{N}v_i,\quad\forall~ \bv\in \bbR^{N^*}.
\end{align}
Here the components of $\bspsi^N$ approximate $\psi$ at each quadrature point. Additionally,
\begin{equation} \label{eq:dissource}
q_i^N(t,r)=q(t,r,\dAng{i}{N})
\quad\text{and}\quad
\partial X_i^{\pm} = \{r\in\partial X\colon\pm\dAng{i}{N}\cdot n(r)>0\}
\end{equation}
are, respectively, the discretized source and the inflow/outflow boundaries with respect to $\dAng{i}{N}$.
%\begin{equation}\label{eq:RTESN}
%\left(\epsilon\partial_t + L_{\epsilon ,{\rm{u}},k}\psolPsi{}{}{k}&=\sigse\solPhi{}{} + q_k,\quad&& k=1,\ldots,K_{\rm{u}}
%\end{equation}
%\begin{subequations}\label{eq:RTESN}
%\begin{alignat}{3}
%\label{eq:hyuncolSN} L_{\epsilon ,{\rm{u}},k}\psolPsi{\rm{u}}{}{k}&=\epsilon q_k,\quad&& k=1,\ldots,K_{\rm{u}}\\
%\label{eq:hycolSN}   L_{\epsilon ,{\rm{c}},k}\psolPsi{\rm{c}}{}{k}&=\sigse\left(S_{\rm{u}}\solPsi{\rm{u}}{} + S_{\rm{c}}\solPsi{\rm{c}}{}\right),\quad&& k=1,\ldots,K_{\rm{c}}\\
%\label{eq:hyfullSN} L_{\epsilon ,{\rm{u}},k}\psolPsi{}{}{k}&=\sigse\solPhi{}{} + q_k,\quad&& k=1,\ldots,K_{\rm{u}},
%\end{alignat}
%\end{subequations}
%where
%\begin{equation}\label{eq:hySNLandphi}
%  L_{\epsilon ,\ell,k} = \Omega_k^{\ell}\cdot\nabla_r+\frac{\sigt(r)}{\epsilon },\quad  \solPhi{}{}=S_{\rm{u}}\solPsi{\rm{u}}{} + S_{\rm{c}}\solPsi{\rm{c}}{},\quad S_{\ell}\solPsi{\ell}{}=\frac{1}{4\pi}\sum_{k=1}^{K_{\ell}}\wkl\psolPsi{\ell}{}{k},\ \ell\in\{\rm{u,c}\}.
%\end{equation}
%Here the components of the vector solutions $\solPsi{\rm{u}}{}, \solPsi{\rm{c}}{}$ and $\solPsi{}{}$ satisfy \eqref{eq:hyuncolSN}, \eqref{eq:hycolSN}, and \eqref{eq:hyfullSN} respectively. Therefore for each $k\le K_{\rm u}$, $\psolPsi{\rm{u}}{}{k}(r)\approx \psi_{\rm u}(r, \Omega_k^{\rm{u}})$ and $\psolPsi{}{}{k}(r)\approx \psi(r, \Omega_k^{\rm{u}})$, and for each $k\le K_{\rm c}$, $\psolPsi{\rm{c}}{}{k}(r)\approx \psi_{\rm c}(r, \Omega_k^{\rm{c}})$. 
 
\subsection{Diffusion limit}
%\cdb{Part3: Discussion of diffusion limit.}\\
In scattering dominated regions, the solution of \eqref{eq:RTE} can be approximated by the solution of a diffusion equation \cite{Habetler, LarsenKeller}; that is, if  
\begin{equation}
\inf_{r\in X} \sigma_{\rm{t}}(r) > 0 \quad\text{and} \quad \inf_{r\in X} \sigma_{\rm{a}}(r) > 0,
\end{equation} 
then for any compactly embedded subset $X_0 \Subset X$ and all $\epsilon$ sufficiently small,\\ $\psi(t, r,\Omega) = \phi(t,r) + O(\epsilon )$ for all $r \in X_0$,
%One of the concerns when modeling kinetic equations is accurately approximating the solution in diffusive regimes of the transport equation. In such regions of the domain, the size of a computational spatial may be many times larger than the mean free path (i.e. optically thick spatial cell.) For \eqref{eq:RTE}, the parameter $\epsilon$ will be a measure of how much scattering is in the domain. As $\epsilon\to0$, the mean free path length vanishes and the solution of \eqref{eq:RTE} has been shown \cite{Habetler, LarsenKeller} to satisfy:
% \begin{equation}\label{eq:asymlim}
%     \psi(t, r,\Omega) = \phi(t,r) + O(\epsilon ),\quad r\in X_0,\ t\ge 0,
% \end{equation}
 where $\phi$ satisfies the following diffusion equation
 \begin{equation}\label{eq:difflimeq}
     \partial_t\phi(t,r)-\nabla_r \cdot \left(\frac{1}{3\sigma_{\rm{t}}(r)}\nabla_r\phi(t,r)\right) +\sigma_{\rm{a}}(r)\phi(t,r)=(\cA q)(t,r).
 \end{equation}
Here $X_0$ must be bounded away from $\partial X$ due to the boundary layers of width $O(\epsilon )$ which can appear in the solution of \eqref{eq:RTE} but not the solution of \eqref{eq:difflimeq} \cite{MorelLarsen}.  Discretizations of  \eqref{eq:RTEa} that in the limit $\epsilon \to 0$, become  stable and consistent discretizations of \eqref{eq:difflimeq} are said to capture the (interior) diffusion limit. 

%In more generalized, multi-energy group formulation of the transport equation, the problem could have a variety of material properties. This could result in a spatial region that is optically thick for one energy group while being thin to another \cite{adams}. This is a concern in modeling kinetic equaitons accurately since the domain could have regions where \eqref{eq:RTE} is more diffusive, and other regions where the equation is more hyperbolic. 

In this paper, we are interested in numerical methods that can capture both the (interior) diffusion limit and the steady-state limit of \eqref{eq:RTE}.  For time integration, the steady-state limit suggests a fully implicit approach.   For spatial discretization, there are a variety of approaches, including edge-based finite difference methods like diamond differencing \cite{Lewis}; DG methods with sufficiently rich trial spaces \cite{MorelLarsen, adams, Guermond}; finite volume methods with modified fluxes \cite{Gosse2004,jin1996,hauck2009temporal}; and methods based on even-odd parity \cite{adams1991even, jin2000uniformly, egger2012mixed} and second-order forms of the RTE \cite{lewis2010second}.   We focus here on DG methods with upwind fluxes, which are particularly attractive because they capture the diffusion limit, are fairly robust to general boundary conditions, and also enable the use of sweeping techniques \cite{Larsen,adams2002fast} that are commonly used for solving the quasi steady-state problems generated by a fully implicit time integrator.
%which was shown in \cite{Guermond} that any DG approximation could not use polynomial spaces that were piece-wise constant as the numerical solution becomes globally constant in diffusive regimes. Therefore, upwind DG approximations, where the approximation space is at least linear when restricted to a spatial cell, have been used to converge to the correct diffusion limit away from any boundary layers \cite{adams, Guermond, MorelLarsen}. 

Several angular discretizations have been shown to work well with upwind DG. The initial analysis of the diffusion limit can be found in \cite{MorelLarsen}.  There the discrete ordinates method is employed in a one-dimensional slab geometry.
This analysis was later extended to the multi-dimensional setting in \cite{adams} for a variety of geometries. In \cite{Guermond}, a finite element discretization is used for the angular variables, and the authors re-establish the results from \cite{adams} using functional analytic tools.  Specifically, it is shown that the upwind DG approximation can capture the (interior) diffusion limit as long as the trial space supports global linear functions. In \cite{mcclarren2008}, a spherical harmonic ($P_N$) angular discretization is combined with an upwind DG spatial discretization and a semi-implicit time integration scheme in order to achieve the diffusion limit. 

%For the remainder of the paper, we assume the solution of \eqref{eq:RTE} is independent of the $z$ spatial component. Therefore we see $\psi$ as a solution on a 2-D plane.

\section{Spatial hybrid}\label{sec:Hybrid}
%Our goal with the spatial hybrid is to incorporate the computational advantages of the finite volume method into the angular hybrid scheme while maintaining its ability to solve multi-scale problems. The main benefit of FV is that high order approximation schemes have been created through linear combinations of the cell averages of the solution. This requires only one degree of freedom for any one computational mesh cell. This has the immediate effect of reducing the memory expenditure required to solve the uncollided flux.  Additionally, by continuing to use DG in the solution of the collided flux, we show through formal asymptotic analysis that this combination preserves the diffusion limit in optically thick mesh regions.
In this section, we introduce and analyze a hybrid spatial discretization for the RTE.  The underlying formulation is based on the idea of first collision source \cite{Alcouffe} and has been used in  \cite{Coryhybrid} as a way to combine different angular discretizations in a fully implicit time integration scheme.  Here we combine DG discretization, which performs well in the diffusion limit, with FV discretization, which uses less memory per computational cell, into a single discretization strategy.
%The hybrid method we use was introduced in \cite{Coryhybrid} as a means to solve the transport equation with a fully implicit time integration scheme while reducing the increased computational demand often seen in such schemes. It is based on the splitting method known as the first-collision source \cite{Alcouffe} which consists of splitting the equation into collision and non-collision components.
\subsection{General hybrid formulation}\label{sec:SNhybrid}
The basic idea of first collision source is to separate $\psi$ into $\psi = \psi_{\rm{u}} +\psi_{\rm{c}}$,
where the uncollided flux $\psi_{\rm{u}}$ and the collided flux $\psi_{\rm{c}}$  satisfy 
\begin{subequations}\label{eq:hy}
\begin{align}
\label{eq:hyuncol}    (\epsilon \p_t + \cL_{\epsilon })\psi_{\rm{u}}&=\epsilon q,\\
\label{eq:hycol}    (\epsilon \p_t + \cL_{\epsilon })\psi_{\rm{c}}&=\cS_\epsilon\left(\psi_{\rm{u}} + \psi_{\rm{c}}\right).
\end{align}
\end{subequations}
The hybrid method evolves these equations for a time-step and then re-initalizes the values $\psi_{\rm{u}}$ and $\psi_{\rm{c}}$.  For illustration, with a backward Euler discretization these steps (in reverse order) give
\begin{equation}\label{eq:init}
\psi_{\rm{u}}^{n+1/2} = \psi_{\rm{u}}^{n} +\psi_{\rm{c}}^{n}   \quad \text{and} \quad \psic^{n+1/2} = 0
\end{equation}
and then
\begin{subequations}\label{eq:hytime}
\begin{align}
\label{eq:hyuncoltime}    \epsilon \frac{\psi_{\rm{u}}^{n+1}-\psi_{\rm{u}}^{n+1/2}}{\Delta t} + \cL_{\epsilon }\psi_{\rm{u}}^{n+1}&=\epsilon q^{n+1},\\
\label{eq:hycoltime}    \epsilon \frac{\psi_{\rm{c}}^{n+1}-\psi_{\rm{c}}^{n+1/2}}{\Delta t}+ \cL_{\epsilon }\psi_{\rm{c}}^{n+1}&=\cS_\epsilon \left(\psi_{\rm{u}}^{n+1} + \psi_{\rm{c}}^{n+1}\right),%\\
%\label{eq:hysumtime}\epsilon \frac{\psi^{n+1}-\psi^n}{\Delta t} + \cL_{\epsilon }\psi^{n+1}&=\cS_\epsilon \psi^{n+1}+\epsilon \hat{q}^{n+1},
\end{align}
\end{subequations}
where $q^{n+1}=q^{n+1}(r,\Omega) = q(t^{n+1},r,\Omega)$.   In practice, $\psiu$ is often approximated by a high-order angular discretization, while $\psic$ uses a low-order angular discretization. 
%This is mostly necessary in regions of low levels of scattering.
Since the components of $\psiu$ are uncoupled, a high degree of angular resolution is needed, but the unknowns can be solved in parallel with respect to the angles. For $\psic$, the components are strongly coupled through the operator  $\cS_\epsilon$, and therefore require a relatively small number of unknowns to resolve angular dependencies \cite{Coryhybrid}. \crso{Thus} \crso{t}\cbc{T}he point of the initialization step \eqref{eq:init} is to recover some of the benefits of the high-resolution angular discretization.

The reinitialization step \eqref{eq:init} can be substituted into \eqref{eq:hytime} to obtain a closed rule for updating $\psiu^{n}$ and $\psic^n$:
\begin{subequations}\label{eq:hytimemod}
\begin{align}
\label{eq:hyuncoltimemod}\cL_{\epsilon}^{\Delta t}{\psi}_{\rm{u}}^{n+1}&=\epsilon \left(\frac{\psiu^n + \psic^n}{\Delta t}+ q^{n+1}\right),\\
\label{eq:hycoltimemod}\cL_{\epsilon}^{\Delta t}{\psi}_{\rm{c}}^{n+1}&=\cS_\epsilon \left({\psi}_{\rm{u}}^{n+1} + {\psi}_{\rm{c}}^{n+1}\right),%\\
%\label{eq:hysumtime}\epsilon \frac{\psi^{n+1}-\psi^n}{\Delta t} + \cL_{\epsilon }\psi^{n+1}&=\cS_\epsilon \psi^{n+1}+\epsilon \hat{q}^{n+1},
\end{align}
\end{subequations}
where
\begin{equation}
%\label{eq:Ltime}
  \cL_{\epsilon }^{\Delta t} = \Omega\cdot\nabla_r+\frac{\sigma_{\rm{t}}(r)}{\epsilon }+\frac{\epsilon }{\Delta t}.
\end{equation}
Adding \eqref{eq:hyuncoltimemod} and \eqref{eq:hycoltimemod} together yields a backward Euler discretization of \eqref{eq:RTEa}:
\begin{equation}
\label{eq:sumHybrid}
\cL_{\epsilon}^{\Delta t}{\psi}^{n+1}=\cS_\epsilon\psi^{n+1}+\epsilon \left(\frac{1}{\Delta t}\psi^{n}+ q^{n+1}\right),\\
\end{equation}
where $\psi^{n} = {\psi}^{n}_{\rm{u}} + {\psi}_{\rm{c}}^{n}$ and $\psi^{n+1} = {\psi}^{n+1}_{\rm{u}} + {\psi}_{\rm{c}}^{n+1}$. However, the hybrid strategy is to approximate \eqref{eq:hyuncoltimemod} and \eqref{eq:hycoltimemod} in angle and space using different discretizations, in which case the approximations for ${\psi}^{n+1}_{\rm{u}}$ and ${\psi}_{\rm{c}}^{n+1}$ cannot be added directly.

For the remainder of this section we fix $n$ and $\dt$ and focus on finding a hybrid approximation for $\psi^{n+1}$.  Since the discretizations in \eqref{eq:hytimemod} and \eqref{eq:sumHybrid} are implicit, we can reformulate the system as a steady-state problem. Let $V^{\rm u}$ and $V^{\rm c}$ be two finite-dimensional vector spaces and let $f^{\rm{u}}\in V^{\rm u}$ and $f^{\rm{c}} \in V^{\rm c}$ satisfy
 \begin{subequations}\label{eq:hynewapprox}
\begin{align}
\label{eq:hyuncolnew}L^{{\rm u}}_\epsilon f^{\rm{u}}&=\epsilon Q,\\
\label{eq:hycolnew}L^{{\rm c}}_\epsilon f^{\rm{c}}&= S^{\rm c,\rm u}_\epsilon f^{\rm{u}} 
	+ S^{\rm c,\rm c}_\epsilon f^{\rm{c}}.
\end{align}
\end{subequations}
Here $Q \in V^{\rm u}$ is an approximation of $\left(\frac{1}{\Delta t}\psi^{n}+ q^{n+1}\right)$; 
$S^{\rm c,\rm u}_\epsilon: V^{\rm u} \to V^{\rm c}$ and $S^{\rm c,\rm c}_\epsilon: V^{\rm c} \to V^{\rm c}$ are both approximations of $\cS_\epsilon$; and $L^{{\rm u}}_\epsilon:V^{\rm u}\to V^{\rm u}$ and $L^{{\rm c}}_\epsilon: V^{\rm c}\to V^{\rm c}$ are both approximations of $\cL_{\epsilon}^{\Delta t}$.

The next step is to compute an approximation $f \in V^{\rm u}$ of $\psi^{n+1}$ from $f^{\rm{u}}$ and $f^{\rm{c}}$. The strategy in \cite{Crockatt,Coryhybrid} is to let $f = f^{\rm{u}} + \cR f^{\rm{c}}$, where $\cR: V^{\rm c} \to V^{\rm u}$ is a ``relabeling operator".  Here, we instead follow \cite{CrockattDis} and solve the following approximation of \eqref{eq:sumHybrid}:
\begin{equation}\label{eq:hyclosed}
L^{{\rm u}}_\epsilon f= \left(S^{\rm u,\rm u}_\epsilon f^{\rm{u}}+ S^{\rm u,\rm c}_{\epsilon} f^{\rm{c}}\right) + \epsilon Q,
\end{equation}
where $S_\epsilon^{\rm u,\rm u}\colon V^{\rm u}\to V^{\rm u}$ and $S_\epsilon^{\rm u ,\rm c}\colon V^{\rm c}\to V^{\rm u}$ both approximate $\cS_\epsilon$.
%\end{subequations}
% we will treat collided solution at step $n$ as a new source for the uncollided equations at step $n+1$ by solving for a new variable $\psi^{n}$ that approximates the sum of $\psi_{\rm{u}}^{n}$ and $\psi_{\rm{c}}^{n}$. With respect to the semi-discrete form \eqref{eq:hytime}, this gives
%\begin{subequations}\label{eq:hyclosed}
%\begin{align}
%\label{eq:hyuncolclosed} \cL_{\epsilon }^{\Delta t}\psi_{\rm{u}}^{n+1}&=\epsilon q^{\Delta t},\\
%\label{eq:hycolclosed}   \cL_{\epsilon }^{\Delta t}\psi_{\rm{c}}^{n+1}&=\cS_\epsilon\left( \psi_{\rm{u}}^{n+1} + \psi_{\rm{c}}^{n+1}\right),\\
%\label{eq:hyfullclosed} \cL_{\epsilon }^{\Delta t}\psi^{n+1}&=\cS_\epsilon\left(\psi_{\rm{u}}^{n+1} + \psi_{\rm{c}}^{n+1}\right) + \epsilon q^{\Delta t},
%\end{align}
%\end{subequations}
%where
%\begin{equation}\label{eq:phinp1}
%  \cL_{\epsilon }^{\Delta t} = \Omega\cdot\nabla_r+\frac{\sigma_{\rm{t}}(r)}{\epsilon }+\frac{\epsilon }{\Delta t},\quad  q^{\Delta t} = \frac{1}{\Delta t}\psi^{n}+ q^{n+1}.
%\end{equation}
%Other methods to handle this approximation of the sum of $\psi_{\rm{u}}^{n}$ and $\psi_{\rm{c}}^{n}$ have been discussed, including relabeling strategies that move a portion of the collided flux into the uncollided flux \cite{Coryhybrid}, and hyperinterpolation over a set of polynomials \cite{Crockatt}.

While the formulation above uses backward Euler for the temporal discretization, we use a second-order diagonally implicit Runge-Kutta method \cite{DIRK} method for the time-dependent numerical experiments in Section \ref{sec:Results}. Like backward Euler, each stage of this method can be written into a steady-state form.

\subsection{Finite volume / discontinuous Galerkin hybrid}\label{sec:HyImp}
One of the important features of the hybrid method is that it is allows for different discretizations of each component of \eqref{eq:hynewapprox} as well as \eqref{eq:hyclosed}.   While the focus of this work is on the hybridization in space, we also allow for discrete ordinate angular discretizations of different orders.
Specifically, we use finite volume (FV) with (possibly) high-order discrete ordinates  to solve \eqref{eq:hyuncolnew} and \eqref{eq:hyclosed}, and we use discontinuous Galerkin (DG) with (possibly) low-order discrete ordinates to solve \eqref{eq:hycolnew}. The FV and DG discretizations will both be formally second-order and, to allow for sweeping, will use upwinding to define numerical traces at cell interfaces.
%For the choices of the spatial discretizations in the hybrid strategy, we choose to apply second order finite volume (FV) to \eqref{eq:hyuncolclosedSS} and \eqref{eq:hyfullclosedSS} and second order discontinuous Galerkin (DG) to \eqref{eq:hycolclosedSS} for rectangular mesh cells. We also apply discrete ordinates discretization to the system with a high order quadrature for \eqref{eq:hyuncolclosedSS} and \eqref{eq:hyfullclosedSS} and a low order quadrature for \eqref{eq:hyfullclosedSS}. 

 Let the high-order discrete ordinate quadrature be order $N_{\rm u}$ with $N_{\rm u}^*$ points and weights.  Similarly, let the low-order discrete ordinate quadrature be of order $N_{\rm c}$ with $N_{\rm c}^*$ points and weights. For the remainder of the paper, we use the simplified notation
\begin{equation}
\{\oiu,\wku\}_{i=1}^{N_{\rm u}^*}:=\{\dAng{i}{N_{\rm u}},\dWgt{i}{N_{\rm u}}\}_{i=1}^{N_{\rm u}^*},\quad \{\oic,\wkc\}_{i=1}^{N_{\rm c}^*}:=\{\dAng{i}{N_{\rm c}},\dWgt{i}{N_{\rm c}}\}_{i=1}^{N_{\rm c}^*},
\end{equation}
and define the component boundaries
\begin{equation}
\partial X_i^{{\rm n},\pm} = \{r\in\partial X\colon\pm\dAng{i}{\rm n}\cdot n(r)>0\},\quad \text{for } {\rm n\in \{u, c\}}.
\end{equation}
%Using a second order time integration scheme for the system, we are able to estimate the approximate time for the method to run by counting the degrees of freedom for our particular method as compared to other methods.
We denote by $\nsol{S}_{N_{\rm{u}}}\nsol{S}_{N_{\rm{c}}}$ a hybrid angular discretization and by X-Y a spatial discretization that uses method X to discretize the uncollided component $\psi_{\rm{u}}$ and method Y to discretize the collided component $\psi_{\rm{c}}$.  In Table \ref{Table 0}, we summarize the leading order terms in the flop count and degrees of freedom, for both Cartesian and triangular meshes.  The values are given per iteration per spatial mesh cell; their derivation is explained in greater detail in Appendix \ref{App1}.

\begin{table}[tbhp]
\scriptsize
\centering
\begin{tabular}{|c|c|c|c|c|}
\cline{2-5}
			\multicolumn{1}{c|}{} &\multicolumn{2}{c|}{Cartesian}&\multicolumn{2}{c|}{Triangular}\\
\hline
{}& Flops & DOF & Flops & DOF\\
\hline
FV & $N^*$ & $N^*$ & $N^*$ & $N^*$\\
\hline
DG & $(2^d+2^{2d}) N^*$ & $2^d N^*$ & $(d+1)^2 N^*$ & $(d+1) N^*$\\
\hline
DG-DG & $(2^d+2^{2d})(N_{\rm u}^* + N_{\rm c}^*)$ & $2^d(N_{\rm u}^* + N_{\rm c}^*)$ & $(d+1)\left(d+2\right)(N_{\rm u}^* + N_{\rm c}^*)$ & $(d+1)(N_{\rm u}^* + N_{\rm c}^*)$\\
\hline
FV-DG & $N_{\rm u}^* + (2^d+2^{2d})N_{\rm c}^*$ & $N_{\rm u}^* + 2^dN_{\rm c}^*$ & $N_{\rm u}^* + (d+1)\left(d+2\right)N_{\rm c}^*$ & $N_{\rm u}^* +(d+1) N_{\rm c}^*$\\
\hline
\end{tabular}
\caption{Leading order operations (Flops) and degrees of freedom (DOF) per iteration per cell. Here  $d$ is the dimension of the spatial domain. DG methods on Cartesian grids use polynomials in $Q_1$ for each cell, and on triangular grids use polynomials in $P_1$. Further details can be found in Appendix \ref{App1}.}\label{Table 0} 
\end{table}

We formulate the spatial discretization using bilinear and linear operators common to DG discretization \cite{Han}.
%To see how to implement this hybrid method for both space and angle, we will be using standard bilinear and linear operators common to DG discretizations . This requires a first order scheme for the uncollided equations and a second order scheme for the collided equations. 
Let $\{\cT_h\}_{h>0}$ be a regular family of partitions of $X$ into open elements $K$, with $h_K=\text{diam}(K)$ and $h=\max_{K\in \cT_h}h_K$. 
Let $Q_i(K)$ be the set of polynomials with support $K$ of maximum degree $i$ in each spatial dimension, and let $P_i(K)$ be the set of polynomial with support $K$ of total degree less than or equal to $i$.
Let $\cE_h^{\rm in}$ be the set of all interior edges of $\cT_h$; let $\cE_h^{\rm ex}$ be the set of exterior edges; and let $\cE_{h,i}^{{\rm n},\pm}= \cE_h^{\rm ex}\cap \partial X_{i}^{{\rm n},\pm}$ for $\rm n\in\{u, c\}$. 
For each edge $e$, let $n_e$ be a fixed normal vector with respect to $e$.  For interior edges, the direction of $n_e$ is chosen by convention.  
For exterior edges, we assume that $n_e$ points outward from the domain.  Given an edge $e$, let $v$ be any scalar-valued function that is continuous on the cells adjacent to $e$.  
Then the jump of $v$ at $r \in e$ is $\llbracket v\rrbracket (r)=v^+(r)-v^-(r)$, where
\begin{equation}
v^+(r) = \lim_{\varepsilon \to 0+} v(r + \varepsilon n_e)
\quad\text{and} \quad
v^-(r) = \lim_{\varepsilon \to 0+} v(r - \varepsilon n_e).
\end{equation}
For $k \in \{0,1 \}$, define

\begin{equation}
\cX_{h,k} = \{v\in L^2(X):\forall K\in \cT_h,\ v|_{K} \in Z_k(K)\},
\end{equation}
where $Z=Q$ for Cartesian grids and $Z=P$ for triangular grids. 
%\begin{equation}
%\cX_{h,0} = \{v\in L^2(X):\forall K\in \cT_h,\ v|_{K} \in Q_0(K)\}
%\quad \text{and} \quad
%\cX_{h,1}=\{v\in L^2(X): \forall\ K\in\cT_h, v|_{K}\in Q_1(K)\},
%\end{equation}
%where $Q_0(K)$ is the space of piece-wise constants on $K$ and $Q_1(K)$ is the piece-wise bilinear space of polynomials on the same domain.   
Let $\cW_{h,k}^{\rm{n}}=(\cX_{h,k})^{N_{\rm{n}}^*}$ for ${\rm n\in\{u,c\}}$ and $k=\{0,1\}$; for all $v^{\rm n}\in \cW_{h,k}^{\rm{n}}$ let
%Let $\cW_{h,0}^{\rm{u}}=(\cX_{h,0})^{N_{\rm{u}}^*}$ and $\cW_{h,1}^{\rm{c}}=(\cX_{h,1})^{N_{\rm{c}}^*}$; \cdb{Should I give this space a general form to account for its usage later on?} for all $v^{\rm u}\in\cW_{h,0}^{\rm{u}}$ and $v^{\rm c}\in\cW_{h,1}^{\rm{c}}$ let 
\begin{equation}
v^{\rm n}=[v_1^{\rm n},\ldots,v_{\nnumn}^{\rm n}]^T\cdb{,}
\quad {\rm n\in\{u,c\}},
%v^{\rm c}=[v_1^{\rm c},\ldots,v_{\nnumc}^{\rm c}]^T.
\end{equation}
where $v_i^{\rm n}\in \cX_{h,k}$ for each $1\le i\le \nnumn$.

For finite volume methods, polynomial approximations are generated from cell averages in neighboring cells. In particular, approximations in $\cX_{h,1}$ for the uncollided equations are generated using elements of $\cX_{h,0}$. 

For the uncollided equation, the  bilinear form ${\cB}_{\epsilon}^{{\rm u}} \colon \cW_{h,0}^{\rm{u}} \times \cW_{h,0}^{\rm{u}} \to \bbR$, corresponding to the left-hand side of \eqref{eq:hyuncolnew}, is given by
%The bilinear form for the uncollided equations would then take inputs $\sol{},\test{}\in \cW_{h,0}^{\rm{u}}$. To create a higher order scheme, we must reconstruct a linear approximation using $\sol{}$. To facilitate this we create a reconstruction operator $\SRfull:\cW_{h,0}^{\rm{u}}\to\cW_{h,1}^{\rm{u}}$ and use the following modified bilinear form:
\begin{equation}
	\label{eq:Bilin2}
	{\cB}_{\epsilon}^{{\rm u}}(g,\test{})  = \sum_{i=1}^{N_{\rm{u}}^*}\wku{\cB}_{\epsilon,i}^{{\rm u}}(g,\test{}),
\end{equation}
where $\cB_{\epsilon,i}^{{\rm{u}}}\colon \cW_{h,0}^{\rm{u}} \times \cW_{h,0}^{\rm{u}} \to \bbR$ 
%\cdh{put domain and range for other linear and bilinear functionals} 
is given by
\begin{align}\label{eq:FVbilin}
\cB_{\epsilon,i}^{{\rm{u}}}(g,\test{}) =& \sum_{K\in\cT_h}\int_{K} \frac{\sigma_{\rm{t}}}{\epsilon}\sol{i}\testk{}\ dr + \sum_{e\in\cE^{\rm in}_h}\int_e\oiu\cdot n_e (\SRfull\sol{})_i^{\uparrow}\llbracket \testk{}\rrbracket\ dr
 + \sum_{e\in \cE_{h,i}^{{\rm u},+}}\int_{e}\Omega_{i}^{\rm{u}}\cdot n_e (\SRfull\sol{})_i^{\uparrow}\testk{}\ dr,
\end{align}
with $\SRfull \colon \cW_{h,0}^{\rm{u}}\to\cW_{h,1}^{\rm{u}}$ a reconstruction operator, and for any $v \in \cW_{h,1}^{\rm n}$,
\begin{equation}
v_i^{\uparrow} = \begin{cases}
v_i^- , & \Omega_i^{\rm n} \cdot n_e > 0\cdb{,} \\ 
v_i^+ , & \Omega_i^{\rm n} \cdot n_e < 0, \\ 
\end{cases}\quad {\rm n\in\{u,c\}}.
\end{equation}  
Details of the operator $R$, for the case $Z_i(K)=Q_0(K)$ and $d=2$, are given in Appendix \ref{App2}.
%A second-order reconstruction will involve a linear combination of the value of the function $\sol{i}$ on two adjacent cells and therefore depends on $e$.
% \cdh{This dependence is not clear for the arbitrary reconstruction operator $\SRfull$. I may need a suggestion here.}

The linear operator $\cF^{{\rm u}} \colon \cW_{h,0}^{\rm{u}} \to \bbR$, corresponding to the right-hand side of \eqref{eq:hyuncolnew}, is given by
\begin{align}
\cF^{\rm{u}}(\test{}) = \sum_{i=1}^{N_{\rm{u}}^*}\wku \cF_{i}^{{\rm u}}(\test{}),
\end{align}
where $\cF_{i}^{{\rm u}} \colon \cW_{h,0}^{\rm{u}} \to \bbR$ is given by 
%\cdh{look at roman index $i$ on boundary term}
	\begin{align}
	\cF_{i}^{{\rm u}}(\test{})= \sum_{K\in \cT_h}\int_{K} \epsilon q^{}_i\testk{}\ dr + \sum_{e\in \cE_{h,i}^{{\rm u}, -}}\int_e |\oiu\cdot n_e|\psi_{{\rm b},i}^{}\testk{}\ dr,
	\end{align}
with $q_i=q_i^{N_{\rm u}}(r)$ \cbc{(see \eqref{eq:dissource})} and $\psi_{{\rm b},i}=\psi_{\rm{b}}(r,\oiu)$. 

For the collided equations, the bilinear form ${\cB}_{\epsilon}^{{\rm c}}\colon \cW_{h,1}^{\rm{c}}\times\cW_{h,1}^{\rm{c}}\to\bbR$ corresponding to the left-hand side of \eqref{eq:hycolnew} is given by
\begin{equation}
{\cB}_{\epsilon}^{{\rm c}}(\sol{},\test{})  = \sum_{i=1}^{N_{\rm{c}}^*}\wkc{\cB}_{\epsilon,i}^{{\rm c}}(\sol{},\test{}),
\end{equation}
 where $\cB_{\epsilon,i}^{{\rm c}}\colon\cW_{h,1}^{\rm{c}}\times\cW_{h,1}^{\rm{c}}\to\bbR$ is given by
\begin{subequations}
\begin{align}
\cB_{\epsilon,i}^{{\rm c}}(\sol{},\test{}) =&\sum_{K\in\cT_h}\int_{K} -\sol{i}\Omega_{i}^{{\rm c}}\cdot\nabla_r \testk{} + \frac{\sigma_{\rm{t}}}{\epsilon } \sol{i}\testk{}\ dr\\
\nonumber   &+ \sum_{e\in\cE^{\rm in}_h}\int_e\Omega_{i}^{{\rm c}}\cdot n_e \sol{i}^{\uparrow}\llbracket \testk{}\rrbracket\ dr + \sum_{e\in \cE_{h,i}^{{\rm c}, +}}\int_{e}\Omega_{i}^{\rm c}\cdot n_e \sol{i}^{\uparrow}\testk{}\ dr.
\end{align}
\end{subequations}
Additionally, the bilinear forms $\cC_{\epsilon}^{\rm u,c}\colon\cW_{h,1}^{\rm{u}}\times\cW_{h,1}^{\rm{c}}\to\bbR$ and $\cC_{\epsilon}^{\rm c,c}\colon\cW_{h,1}^{\rm{c}}\times\cW_{h,1}^{\rm{c}}\to\bbR$ corresponding to the right-hand side of \eqref{eq:hycolnew} are given by
%\cdh{why can't this bilinear operator be expressed like the ones below in a general form?}
\begin{equation}
\cC_{\epsilon}^{\rm n,m}(\sol{},\test{}) = \sum_{i=1}^{N_{\rm n}^*}\wkl\cC_{\epsilon,i}^{\rm n,m},%4\pi\sum_{K\in\cT_h}\int_{K} S_{\epsilon}^{\rm c}\sol{}A^{\rm c}\test{}\ dr,\quad {\rm n\in\{u,c\}},
\end{equation}
where $\cC_{\epsilon,i}^{\rm n,m}\colon\cW_{h,1}^{\rm{n}}\times\cW_{h,1}^{\rm{m}}\to\bbR$ is given by
\begin{equation}
\cC_{\epsilon,i}^{\rm n,m}(\sol{},\test{}) = \sum_{K\in\cT_h}\int_{K}\left(S_{\epsilon}^{\rm m,n}g\right)_iv_i\ dr,\quad {\rm m,n\in\{u,c\}},
\end{equation}
and $S_{\epsilon}^{\rm m,n}:\cW_{h,1}^{\rm{n}}\to\cW_{h,1}^{\rm{m}}$ is a linear operator given by
(cf. \eqref{eq:SN_op_defs2})
\begin{equation}
S_{\epsilon}^{\rm m,\rm n}v = \left(\frac{\sigt}{\epsilon}-\epsilon\siga\right) (A^{\rm n}v)\mathbbm{1}^{\rm m},\quad  \forall\ v\in \cW_{h,1}^{\rm n},\quad A^{\rm n}:=A^{N_{\rm n}},\ \text{for}\ {\rm m,n \in\{u,c\}}.
\end{equation}
Here $\mathbbm{1}^{\rm m}\in \bbR^{N_{\rm m}^\ast}$ is a vector whose components are all one.
%Finally, the bilinear form $\cC_{\epsilon}^{\rm u,c}\colon\cW_{h,1}^{\rm{u}}\times\cW_{h,1}^{\rm{c}}\to\bbR$ and the generic bilinear form $\cC_{\epsilon}^{\rm n,u}\colon\cW_{h,1}^{\rm{n}}\times\cW_{h,0}^{\rm{u}}\to\bbR$  %, $\cC_{\epsilon}^{\rm c,u}\colon\cW_{h,1}^{\rm{c}}\times\cW_{h,0}^{\rm{u}}\to\bbR$, and $\cC_{\epsilon}^{\rm u,c}\colon\cW_{h,0}^{\rm{u}}\times\cW_{h,1}^{\rm{c}}\to\bbR$ 
%are given by
%\begin{equation}
%\cC_{\epsilon}^{\rm u,c}(\sol{},\test{}) = \sum_{i=1}^{N_{\rm c}^*}\wkc\cC_{\epsilon,i}^{\rm u,c},\quad \cC_{\epsilon}^{\rm n,u}(\sol{},\test{}) = \sum_{i=1}^{N_{\rm u}^*}\wku\cC_{\epsilon,i}^{\rm n,u},\quad {\rm n\in\{u,c\}},
%%\cC_{\epsilon,i}^{\rm n,n'}(\sol{},\test{}) = \sum_{K\in\cT_h}\int_{K}\left(S_{\epsilon}^{\rm n,n'}g\right)_iv_i\ dr,\quad {\rm n,n'\in\{u,c\}}.
%%\cC_{\epsilon}^{\rm c,u}(\sol{},\test{}) = 4\pi\sum_{K\in\cT_h}\int_{K} S_{\epsilon}^{\rm c}\sol{}A^{\rm u}\test{}\ dr, \quad \cC_{\epsilon}^{\rm u,c}(\sol{},\test{}) = 4\pi\sum_{K\in\cT_h}\int_{K} S_{\epsilon}^{\rm u}\sol{}A^{\rm c}\test{}\ dr.
%\end{equation}
%where $\cC_{\epsilon,i}^{\rm u,c}\colon\cW_{h,1}^{\rm{u}}\times\cW_{h,1}^{\rm{c}}\to\bbR$ and $\cC_{\epsilon,i}^{\rm n,u}\colon\cW_{h,1}^{\rm{n}}\times\cW_{h,0}^{\rm{u}}\to\bbR$ are given by
%\begin{equation}
%\cC_{\epsilon,i}^{\rm u,c}(\sol{},\test{}) = \sum_{K\in\cT_h}\int_{K}\left(S_{\epsilon}^{\rm c,u}g\right)_iv_i\ dr,\quad \cC_{\epsilon,i}^{\rm n,u}(\sol{},\test{}) = \sum_{K\in\cT_h}\int_{K}\left(S_{\epsilon}^{\rm u,n}g\right)_iv_i\ dr,\quad {\rm n\in\{u,c\}}.
%\end{equation}

Using these bilinear forms, our method is to find $f_{h}^{{\rm{u}}}\in\cW_{h,0}^{{\rm u}}$, $f_{h}^{{\rm{c}}}\in \cW_{h,1}^{\rm{c}}$, and $f_{h}\in\cW_{h,0}^{\rm{u}}$ such that the following holds:
\begin{subequations}\label{eq:hyFVDG}
\begin{alignat}{2}
\label{eq:hyFVDGa}\cB_{\epsilon}^{{\rm{u}}}(f_{h}^{\rm{u}},\test{}) &= \cF^{\rm{u}}(\test{}),\quad &\forall\ \test{}\in\cW_{h,0}^{\rm{u}},\\
\label{eq:hyFVDGb}\cB_{\epsilon}^{{\rm{c}}}(f_{h}^{\rm{c}}, \test{}) - \cC_{\epsilon}^{\rm c,c}(f_{h}^{\rm{c}}, \test{})&=  \cF^{\rm{c}}(\test{}),\quad &\forall\ \test{}\in\cW_{h,1}^{\rm{c}},\\
\label{eq:hyFVDGc}\cB_{\epsilon}^{{\rm{u}}}(f_{h},\test{}) &= \cF(\test{}),\quad &\forall\ \test{}\in\cW_{h,0}^{\rm{u}},
\end{alignat}
\end{subequations}
where the linear operators $\cF^{\rm{c}}\colon\cW_{h,1}^{\rm{c}}\to\bbR$ and $\cF\colon\cW_{h,0}^{\rm{u}}\to\bbR$ are given by
\begin{subequations}\label{eq:DGlinopsource}
\begin{align}
%\cF^{\rm{c}}(\test{})&= 4\pi\sum_{K\in \cT_h}\int_K  S_{\epsilon}^{\rm{u}} f_{h}^{\rm u} A^{\rm c}\test{}\ dr,\\ %+ \sum_{F\in \cF_{h,k}^{\partial-,c}}\int_F |\Omega_{k}^{\rm{c}}\cdot n_1|(\psi_{\rm{b}}^{n+1})_k\testk{c}\ dr 
\cF^{\rm{c}}(\test{})&=\cC_{\epsilon}^{\rm u,c}(f_{h}^{\rm{u}},\test{}),\\
%\cF(\test{})&= \cF^{\rm{u}}(\test{}) + 4\pi\sum_{K\in \cT_h}\int_{K}\left(S_{\epsilon}^{\rm{u}} f_{h}^{{\rm u}} + S_{\epsilon}^{\rm{c}} f_{h}^{\rm c}\right)A^{\rm u}\test{}\ dr.
\cF(\test{})&= \cF^{\rm{u}}(\test{}) + \cC_{\epsilon}^{\rm u,u}(f_{h}^{\rm{u}},\test{}) + \cC_{\epsilon}^{\rm c,u}(f_{h}^{\rm{c}},\test{}).
\end{align}
\end{subequations}
In this formulation, $f_{h}$ is the approximation to $\psi$ in \eqref{eq:RTE}. 
%Since one or more of the inputs in the bilinear forms of \eqref{eq:DGlinopsource} are source terms during the solution process of the associated equation, the bilinear operators may be represented as the linear operators in \eqref{eq:hyFVDG}.
%as one of the inputs in the bilinear forms are known source terms during the solution of the associated equation.

%\cdb{Part2: Setup the solution process. How does this look as a matrix vector equation. Create an algorithm of the steps needed to solve the system.}\\
To assemble the matrix components for the operator form of \eqref{eq:hyFVDG}, let $\mnumz=\dim(\cX_{h,0})$ and $\mnumo=\dim(\cX_{h,1})$. Then $\dim (\cW_{h,0}^{\rm u}) =\mnumz N_{\rm u}^*$  and $\dim (\cW_{h,0}^{\rm c})= \mnumo N_{\rm c}^*$.   Let 
\begin{equation}
\{\bbasu{(i,k)}: i=1, \dots N_{\rm u}^*, k = 1, \dots  \mnumz \}
\quad \text{and} \quad 
\{\bbasc{(i,k)}: i=1, \dots N_{\rm c}^*, k = 1, \dots  \mnumo \}
\end{equation}
be two sets of vector-valued basis functions for $\cW_{h,0}^{\rm u}$ and $\cW_{h,0}^{\rm c}$, respectively, and set
\begin{equation}
f_{h}^{\rm{u}} = \sum_{i=1}^{N_{\rm u}^*} \sum_{k=1}^{\mnumz}\cofuik{(i,k)}\bbasu{(i,k)},
\qquad
f_{h}^{\rm{c}} = \sum_{i=1}^{N_{\rm c}^*} \sum_{k=1}^{\mnumo}\cofcik{(i,k)}\bbasc{(i,k)}
\quad\text{and}\quad 
f_{h} = \sum_{i=1}^{N_{\rm u}^*} \sum_{k=1}^{\mnumz}\cofik{(i,k)}\bbasu{(i,k)}.
\end{equation}
Then matrix form of \eqref{eq:hyFVDG} is an equation for the coefficient vectors
\begin{align}
\cofu &= [\cofuik{(1,1)}, \cofuik{(1,2)},\ldots,\cofuik{(1,\mnumz)}, \cofuik{(2,1)},\ldots,\cofuik{(N_{\rm u}^*,\mnumz)}]^T \in \bbR^{N_{\rm u}^*\mnumz}, \\
\cofc &= [\cofcik{(1,1)}, \cofcik{(1,2)},\ldots,\cofcik{(1,\mnumo)}, \cofcik{(2,1)},\ldots,\cofcik{(N_{\rm c}^*,\mnumo)}]^T \in \bbR^{N_{\rm c}^*\mnumo} ,\text{and} \\
\cof &= [\cofik{(1,1)}, \cofik{(1,2)},\ldots,\cofik{(1,\mnumz)}, \cofik{(2,1)},\ldots,\cofik{(N_{\rm u}^*,\mnumz)}]^T \in \bbR^{N_{\rm u}^*\mnumz},
\end{align}
that takes the form
\begin{subequations}\label{eq:hyFVDGmatvec}
	\begin{alignat}{2}
	\label{eq:hyFVDGmatvecA}L_{\epsilon}^{\rm u} \cofu &= {\bf q}_{h}^{\rm u},\\
	%\label{eq:hyFVDGmatvecB}\left(1 + \hat{A}^{\rm c}(L_{\epsilon,h}^{\rm c})^{-1}\Sigma_{\epsilon, h}^{\rm c}\right)\cofcphi &= \hat{A}^{\rm c}(L_{\epsilon,h}^{\rm c})^{-1}\Sigma_{\epsilon,h}^{\rm c}R^{0,1}\cofuphi,\\
	\label{eq:hyFVDGmatvecB} L_{\epsilon}^{\rm c}\cofc - \CCo\cofc &= \CCt\cofu,\\
	\label{eq:hyFVDGmatvecC}L_{\epsilon}^{\rm u} \cof &= \CCth\cofc + \CCf\cofu + {\bf q}_{h}^{\rm u}
	\end{alignat}
\end{subequations}
where the components of 
$L_{\epsilon}^{\rm n}: \bbR^{(\nnumn\mnumn)}\to\bbR^{(\nnumn \mnumn)}$ for ${\rm n\in\{u,c\}}$ are 
%and $L_{\epsilon}^{\rm c}: \bbR^{(N_{\rm c}^*\mnumo)}\to\bbR^{(N_{\rm c}^*\mnumo)}$ 
%\cdh{are  block diagonal matrices with entries},
\begin{equation}\label{eq:sweep}
\left(L_{\epsilon}^{\rm n}\right)_{(i,k),(i',k')} = \cB_{\epsilon}^{\rm n}(\bbasn{(i',k')},\bbasn{(i,k)}), \qquad  1\leq i,i'\le \nnumn ,\quad  1\leq k,k'\le \mnumn,\\
\end{equation}
%\begin{subequations}
%	\begin{align}
%	\left(L_{\epsilon}^{\rm u}\right)_{(i,k),(i',k')} = \cB_{\epsilon}^{\rm u}(\bbasu{(i',k')},\bbasu{(i,k)}), \qquad  1\leq i,i'\le N_{\rm u}^* ,\quad  1\leq k,k'\le \mnumz,\\
%	\left(L_{\epsilon}^{\rm c}\right)_{(i,k),(i',k')} = \cB_{\epsilon}^{\rm c}(\bbasc{(i',k')},\bbasc{(i,k)}), \qquad  1\leq i,i'\le N_{\rm c}^* ,\quad  1\leq k,k'\le \mnumo,
%	\end{align}
%\end{subequations}
the components of $\CCv:\bbR^{(\nnumm\mnumm)}\to\bbR^{(\nnumn\mnumn)}$, for ${\rm m,n\in\{u,c\}}$ are
%$\CCo:\bbR^{(N_{\rm c}^*\mnumo)}\to\bbR^{(N_{\rm c}^*\mnumo)}$, $\CCt:\bbR^{(N_{\rm u}^*\mnumz)}\to\bbR^{(N_{\rm c}^*\mnumo)}$, $\CCth:\bbR^{(N_{\rm c}^*\mnumo)}\to\bbR^{(N_{\rm u}^*\mnumz)}$, and $\CCf:\bbR^{(N_{\rm u}^*\mnumz)}\to\bbR^{(N_{\rm u}^*\mnumz)}$ are  
%\cdh{I think using the subscripts 0 and 1 is screwing you up and making life complicated.  At a minimum, the way in which write out the index ranges in 3.2.27 should be consistent.  But unless there is a serious reason not to, I would figure out a way to replace M0 and M1 by  and Mu and Mc.}
\begin{align}
\left(\CCv\right)_{(i,k),(i',k')} = \cC_{\epsilon}^{\rm m,n}(\bbasm{(i',k')},\bbasn{(i,k)}), %\qquad  (1,1)\leq (i,k),(i',k')\le (N_{\rm c}^*, \mnumo),
\end{align}
%\begin{subequations}
%\begin{align}
%\left(\CCo\right)_{(i,k),(i',k')} &= \cC_{\epsilon}^{\rm c,c}(\bbasc{(i',k')},\bbasc{(i,k)}), \qquad  (1,1)\leq (i,k),(i',k')\le (N_{\rm c}^*, \mnumo),\\
%\left(\CCt\right)_{(i,k),(i',k')} &= \cC_{\epsilon}^{\rm u,c}(\bbasu{(i',k')},\bbasc{(i,k)}), \qquad  (1,1)\leq (i,k)\le (N_{\rm c}^*,\mnumo),\ (1,1)\leq (i',k')\le (N_{\rm u}^*, \mnumz),\\
%\left(\CCth\right)_{(i,k),(i',k')} &= \cC_{\epsilon}^{\rm c,u}(\bbasc{(i',k')},\bbasu{(i,k)}), \qquad  1\leq i'\le N_{\rm c}^*,\ 1\leq i\le N_{\rm u}^* ,\quad  1\leq k'\le \mnumo,\ 1\leq k\le \mnumz,\\
%\left(\CCf\right)_{(i,k),(i',k')} &= \cC_{\epsilon}^{\rm u,c}(\bbasu{(i',k')},\bbasu{(i,k)}), \qquad  1\leq i,i'\le N_{\rm u}^* ,\quad  1\leq k,k'\le \mnumz,
%\end{align}
%\end{subequations}
for $1\leq i'\le \nnumm,\ 1\leq i\le \nnumn ,\ 1\leq k'\le \mnumm,\ 1\leq k\le \mnumn$, and the $(i,k)$-th component $q_{(i,k)}^{\rm u}$ of the source vector $\bq_h^{\rm u}$ is
\begin{equation}\label{eq:setsource}
q_{(i,k)}^{\rm u} = \cF^{\rm u}(\bbasu{(i,k)}),\quad\forall\ i\le N_{\rm u}^*,\ k\le \mnumz.
\end{equation}
Because $L_{\epsilon}^{\rm u}$ has a sparse structure with triangular blocks, \eqref{eq:hyFVDGmatvecA} and \eqref{eq:hyFVDGmatvecC} can be easily inverted.
To solve \eqref{eq:hyFVDGmatvecB}, we reformulate it into a Krylov framework by inverting the streaming operator $L_{\epsilon,h}^{\rm c}$ and applying the discrete average operator $A^{\rm c}$ to both sides. 
Here we overload the operator $A^{\rm n}:(\cY)^{\nnumn}\to \cY$ for an arbitrary space $\cY$ and $\rm n\in\{u,c\}$, such that 
%\cdh{should be a general overload.  also the fact that it is overloaded is a little out of context.  You could say that this is consistent with the definition previously used.}
\begin{equation}\label{eq:integrate}
%\left(A^{\rm n}\bv\right)_k = \frac{1}{4\pi}\sum_{i=1}^{N_{\rm n}^*}\wkl v_{(i,k)},\ \forall\ k\le \mnumn,\ \bv\in\bbR^{N_{\rm u}^*\mnumz},\quad\text{and}\quad 
\left(A^{\rm n}\bv\right)_k = \frac{1}{4\pi}\sum_{i=1}^{\nnumn}\wkl v_{(i,k)},\ \forall\ 1\le k\le \mnumn,\ \forall\ \bv\in\bbR^{\mnumn\nnumn},\ {\rm n\in\{u,c\}}.
\end{equation}
This is consistent with the definition of $A^N$ in \eqref{eq:SN_op_defs2} when $\cY=\bbR$.
Additionally, let $\basu{k}$ and $\basc{k}$ be basis functions such that $\text{span}\{\basu{k}\}_{k=1}^{\mnumz}=\cX_{h,0}$ and $\text{span}\{\basc{k}\}_{k=1}^{\mnumo}=\cX_{h,1}$.  We assume that
\begin{equation}
\bbasn{(i,k)} = [\basni{1},\ldots,\basni{\nnumn}]^T,\quad {\rm n\in\{u,c\}},
%\quad \bbasc{(i,k)} = [\basci{1},\ldots,\basci{\nnumc}]^T,
\end{equation}
where $\basni{i'}=\basn{k}\delta_{i,i'}$
% and $\basci{i'}=\basc{k}\delta_{i,i'}$
, in which case
\begin{align}
\left(\CCo\cofc\right)_{i,k} 
%&= \sum_{i'=1}^{\nnumc}\sum_{k'=1}^{\mnumo}\cC_{\epsilon}^{\rm c,c}(\bbasc{(i',k')},\bbasc{(i,k)})\cofcik{(i',k')}\\
%\nonumber&= 4\pi\sum_{i'=1}^{\nnumc}\sum_{k'=1}^{\mnumo}\int_X\sigse\left(A^{\rm c}\bbasc{(i',k')}\right)\left(A^{\rm c}\bbasc{(i',k')}\right)\cofcik{(i',k')}\\
%\nonumber&
=\sum_{k'=1}^{\mnumo}\wkc\int_X\sigse\basc{k'}\basc{k}\left(A^{\rm c}\cofc\right)_{k'},\quad \cbc{\sigse=\left(\frac{\sigt}{\epsilon}-\epsilon\siga\right)}.
\end{align}
Let $\cofnphi=A^{\rm n}\cofn$, and let $\SSq: \bbR^{\mnumm}\to\bbR^{(\nnumn\mnumn)}$ for ${\rm m,n\in\{u,c\}}$, where
\begin{equation}\label{eq:copy}
\left(\SSq\right)_{(i,k),k'} = \wkl\int_X\sigse\basm{k'}\basn{k},\quad 1\le i\le\nnumn,\ 1\le k \le \mnumn,\ 1\le k'\le \mnumm.
%\left(\SSo\right)_{(i,k),k'} = \wkc\int_X\sigse\basc{k'}\basc{k}\quad 1\le i\le\nnumc,\ 1\le k,k'\le \mnumo.
\end{equation}  
Then 
%\cdh{why is the sigma below so tiny?} \cdb{Checked the macro. Nothing is making it smaller. Here is ``$\backslash Sigma(\Sigma)$" vs ``$\backslash sum(\sum)$" for comparison.}
\begin{equation}\label{eq:ScatOpConv}
\CCv\cofm = \SSq\cofmphi,\quad {\rm m,n\in\{u,c\}}.
\end{equation}
%Similar simplifications can be shown for $\CCt\cofu$, $\CCth\cofc$, and $\CCf\cofu$.
%In particular, we let
%\begin{equation}
%\SSt\cofuphi = \CCt\cofu,\quad \SSth\cofcphi = \CCth\cofc,\quad\text{and}\quad\SSf\cofuphi = \CCf\cofu
%\end{equation}
%where $\cofuphi = A^{\rm u}\cofu$, 
%\begin{equation}
%\left(\SSt\right)_{(i,k),k'} = \wkc\int_X\sigse\basu{k'}\basc{k}\quad 1\le i\le\nnumc,\ 1\le k \le \mnumo,\ 1\le k'\le \mnumz,
%\end{equation}
%and $\SSth$ and $\SSf$ are defined similarly.  \cdh{Define all the $\Sigma$s and $\phi$s first.  The write down 3.2.33 and 3.2.34.  Again, I see the disconnect between u/c indices and 0/1 indices making this discussion way more complicated then it needs to be.}
%\cdh{inverting and applying operators.  give the reader a few words about the steps} 
Using \eqref{eq:ScatOpConv}, we invert the transport operator $L_{\epsilon}^{\rm c}$ in \eqref{eq:hyFVDGmatvecB} and apply the discrete average operator $A^{\rm c}$ to both sides of the resultant equation. Then \eqref{eq:hyFVDGmatvecB} can be written in the following form:
\begin{equation}
\label{eq:hyFVDGmatvecBKrylov}\left(I^{\rm c} - A^{\rm c}(L_{\epsilon}^{\rm c})^{-1}\SSo\right)\cofcphi = A^{\rm c}(L_{\epsilon,h}^{\rm c})^{-1}\SSt\cofuphi,
\end{equation}
where $I^{\rm c}\in\bbR^{(\nnumc\mnumo)\times (\nnumc\mnumo)}$ is the identity matrix, and \eqref{eq:hyFVDGmatvecC} simplifies to 
\begin{equation}\label{eq:hyFVDGmatvecCphi}
L_{\epsilon}^{\rm u} \cof = \SSth\cofcphi + \SSf\cofuphi + {\bf q}_{h}^{\rm u}.
\end{equation}
%\cdh{Is the $1$ above correct?  Should be identity.}
In summary, \eqref{eq:hyFVDGmatvec} can be solved using \eqref{eq:hyFVDGmatvecBKrylov} and \eqref{eq:hyFVDGmatvecCphi} as outlined in Algorithm \ref{Alg1} below.  
\begin{algorithm}[H]
\caption{Steady State Spatial Hybrid Solution Algorithm. 
%\cdh{You need a step in the algorithm below to create $\phi_u$ from $\alpha_u$.  Modify surrounding text as needed.}
}\label{Alg1}
\footnotesize
%\SetAlgoLined
\begin{algorithmic}
\STATE{\texttt{Data}: $\sigt\ge0$, $\siga\ge0$, $\epsilon > 0$, $\psi_{\rm b}$.}
%\KwResult{Write here the result }
\STATE{${ q}_{(i,k)}^{\rm u} \leftarrow \cF^{\rm u}(\bbasu{(i,k)})$,\ $\forall\ i\le \nnumu$,\ $\forall\ k\le \mnumz$, \COMMENT{Initialize Source, \eqref{eq:setsource}}\;}
% ${\bf q}_{{\rm u},h} \leftarrow$ Initialize\_source($\psi_{\rm b}, q$)\;
\STATE{$\cofu\leftarrow \left(L_{\epsilon}^{\rm u}\right)^{-1}{\bf q}_{h}^{\rm u}$, 
\COMMENT{Solve \eqref{eq:hyFVDGmatvecA}}} %\textbackslash Test
%\STATE{$\cofcphi \leftarrow \left(1 + A^{\rm c}\left(L_{\epsilon}^{\rm c}\right)^{-1}\SSo\right)^{-1}A^{\rm c}\left(L_{\epsilon}^{\rm c}\right)^{-1}\SSt\cofuphi$, \COMMENT{Solve \eqref{eq:hyFVDGmatvecBKrylov} using GMRES($\cofuphi$)}\;}
\STATE{$\cofuphi \leftarrow A^{\rm u}\cofu,$
\COMMENT{Form $\cofuphi$}}
\STATE{$\cofcphi \leftarrow \left(I^{\rm c} + A^{\rm c}\left(L_{\epsilon}^{\rm c}\right)^{-1}\SSo\right)^{-1}A^{\rm c}\left(L_{\epsilon}^{\rm c}\right)^{-1}\SSt\cofuphi$, \COMMENT{Solve \eqref{eq:hyFVDGmatvecBKrylov} using GMRES}\;}
%\STATE{$\cofcphi \leftarrow \left(I^{\rm c} + A^{\rm c}\left(L_{\epsilon}^{\rm c}\right)^{-1}\SSo\right)^{-1}A^{\rm c}\left(L_{\epsilon}^{\rm c}\right)^{-1}\SSt\cofuphi$, \COMMENT{Solve \eqref{eq:hyFVDGmatvecBKrylov} using GMRES($\cofuphi$)}\;}
\STATE{$\cof\leftarrow \left(L_{\epsilon}^{\rm u}\right)^{-1}\left(\SSth\cofcphi + \SSf\cofuphi  + {\bf q}_{h}^{\rm u}\right)$. \COMMENT{Solve \eqref{eq:hyFVDGmatvecC}}\;}
 \end{algorithmic}
\end{algorithm}

%\cdb{Part3: Brief look at the potential advantages of the spatial hybrid versus other methods by looking at the relevant leading order terms and their complexities.}\\
\subsection{Diffusion limit, steady state}\label{sec:DiffLim}
%{\color{red}In this section we will layout the importance of getting the diffusion limit correct in meshes that have optically thick cells. The scaled equations will be shown for the standard linear transport equation. We will direct readers to refer to other papers to see original analysis of this equation using discontinous Galerkin. We will discuss the scaled equations for the spatial hybrid method and show the analysis of the method getting the diffusion limit. The following 1-D analysis including the spatial discretizations of FV and DG will be expanded to 2-D.}\\\\

%\cdh{you never say why it is reasonable to focus just on steady-state }
In this section, we show that \eqref{eq:hyFVDG} converges to a consistent discretization of the steady-state form of the diffusion limit \eqref{eq:difflimeq}.  The analysis here closely follows \cite{Guermond}.
%What follows is the analysis on the hybrid finite volume in the uncollided equations and DG in the collided equations, both at second order.
The scaling of  time in  \eqref{eq:RTEa} ensures that the steady-state analysis applies also to the time-dependent problem \eqref{eq:hytimemod}.

We expand the solutions $\solPsi{h}{\rm{u}},\solPsi{h}{\rm{c},}$, and $\solPsi{h}{}$ of \eqref{eq:hyFVDG} in formal Hilbert expansions:
\begin{subequations}
	\begin{align}\label{eq:HilEx}
	\solPsi{h}{{\rm{u}}}&=\solPsi{h}{\rm{u},(0)}+\epsilon\solPsi{h}{\rm{u},(1)}+\epsilon^2\solPsi{h}{\rm{u},(2)}+O(\epsilon^3),\\
	\solPsi{h}{{\rm{c}}}&=\solPsi{h}{\rm{c},(0)}+\epsilon\solPsi{h}{\rm{c},(1)}+\epsilon^2\solPsi{h}{\rm{c},(2)}+O(\epsilon^3),\\
	\solPsi{h}{}&=\solPsi{h}{(0)}+\epsilon\solPsi{h}{(1)}+\epsilon^2\solPsi{h}{(2)}+O(\epsilon^3),
	\end{align}
\end{subequations}
where $\solPsi{h}{{\rm u}, (j)},\solPsi{h}{(j)}\in \cX_{h,0}$ and $\solPsi{h}{{\rm \crso{u}}\cbc{c}, (j)}\in \cX_{h,1}$ for all $j\in\{0,1,2\}$.   The problem solved by the leading term $\solPsi{h}{(0)}$ is obtained by substituting the expansions \eqref{eq:HilEx} into \eqref{eq:hyFVDG} and matching powers of $\epsilon$. 
%\cdh{Put all assumptions together.}
In order to perform the analysis, we assume that $\psi_b=0$, that $\sigt\in\cX_{h,0}$, and that all quadratures employed use positive weights and  are exact for polynomials up to degree two:
\begin{equation}
\sum_{i=1}^{N_{\rm n}^*}w_i^{\rm n}= 4 \pi,
\quad
\sum_{i=1}^{N_{\rm n}^*}w_i^{\rm n}\Omega_{i}^{\rm n} = 0,
\quad 
\sum_{i=1}^{N_{\rm n}^*}w_i^{\rm n}\left(\Omega_{i}^{\rm n}\otimes\Omega_{i}^{\rm n}\right) = \frac{4\pi}{3}I,\quad {\rm n}\in\{\rm u,c\}
\end{equation}
where $\otimes$ is the outer product and $I\in \bbR^{3\times 3}$ is the identity matrix.  To simplify the presentation, we set  
\begin{equation}
\solPhi{h}{(j)}{{\rm n}}=A^{{\rm n}}\solPsi{h}{{\rm n},(j)}
\end{equation} 
for $j\in\{0,1,2\}$ and ${\rm n}\in\{\rm{u},\rm{c}\}$, we denote test functions in  $\cW_{h,0}^{\rm u}$ and $\cW_{h,1}^{\rm c}$ by $\test{\rm u}$ and $\test{\rm c}$, respectively, and we assume that jumps across edges in $\cE_{h}^{\rm ex}$ are computed assuming a zero value on the exterior of $X$.  This last assumption allows us to combine terms over $\cE_h = \cE_{h}^{\rm in} \cup \cE_{h}^{\rm ex}$. 
%\cdh{Explain the process.  You subsitute these expansions into the equations (number :??) and match powers of $\veps$.} 

After substituting the expansions \eqref{eq:HilEx} into \eqref{eq:hyFVDG}, the terms that balance at $\epsilon^{-1}$ are
%\cdh{what does zeroth order terms mean?  What are the terms?}
\begin{subequations}\label{eq:APlim0}
\begin{align}
\label{eq:APlim0a}\sum_{i=1}^{N_{\rm u}^*}\wku\int_X\sigt\psolPsi{h}{{\rm{u}},(0)}{i}\testk{u}\ dr&=0,\\
\label{eq:APlim0b}\sum_{i=1}^{N_{\rm c}^*}\wkc\int_X\sigt\left(\psolPsi{h}{{\rm{c}},(0)}{i} - \solPhi{h}{(0)}{{\rm{c}}}\right)\testk{c}\ dr&=\sum_{i=1}^{N_{\rm u}^*}\wku\int_X\sigt \solPhi{h}{(0)}{{\rm{u}}}\testk{c}\ dr,\\
\label{eq:APlim0c}\sum_{i=1}^{N_{\rm u}^*}\wku\int_X\sigt\psolPsi{h}{(0)}{i}\testk{u}\ dr&=\sum_{i=1}^{N_{\rm u}^*}\wku\int_X\sigt \left(\solPhi{h}{(0)}{{\rm{u}}}+\solPhi{h}{(0)}{{\rm{c}}}\right)\testk{u}\ dr;
\end{align}
\end{subequations}
the terms that balance at $\epsilon^{0}$ are
\begin{subequations}\label{eq:APlim1}
\begin{align}
\label{eq:APlim1a}\sum_{i=1}^{N_{\rm u}^*}\wku&\int_{X}\sigt\psolPsi{h}{{\rm{u}},(1)}{i}\testk{u}\ dr + \sum_{e\in\cE_h}\int_e\Omega_{i}^{\rm{u}}\cdot n_e\left(\SRfull\solPsi{h}{{\rm{u}},(0)}\right)_i^{\uparrow}\bracket{\testk{u}}\ dS(r) = 0,\\
\label{eq:APlim1b}\sum_{i=1}^{N_{\rm c}^*}\wkc&\int_X-\psolPsi{h}{{\rm{c}},(0)}{i}\Omega_{i}^{\rm{c}}\cdot\nabla_r\testk{c}+\sigt\left(\psolPsi{h}{{\rm{c}},(1)}{i} - \solPhi{h}{(1)}{{\rm{c}}}\right)\testk{c}\ dr&\\
\nonumber&+\sum_{e\in\cE_h}\int_e\Omega_{i}^{\rm{c}}\cdot n_e \left(\psolPsi{h}{{\rm{c}},(0)}{i}\right)^{\uparrow}\llbracket \testk{c}\rrbracket\ dS(r) =\sum_{i=1}^{N_{\rm c}^*}\wkc\int_X\sigt \solPhi{h}{(1)}{{\rm{u}}}\testk{c}\ dr;
%\label{eq:APlim1c}\sum_{i=1}^{N_{\rm u}^*}\wku\sum_{K\in\cT_h}&\int_{K}\sigt\psolPsi{h}{(1)}{i}\testk{u}\ dr + \sum_{F\in\cE_h^i}\int_F\Omega_{i}^{\rm{u}}\cdot n_1\SR\psolPsi{h}{(0)}{i}\bracket{\testk{u}}\ dS(r) \\
%\nonumber&+ \sum_{e\in \cF_{h,k}^{\partial+,u}}\int_F\Omega_{i}^{\rm{u}}\cdot n\SR\psolPsi{h}{(0)}{i}\testk{u}\ dS(r) =\sum_{i=1}^{N_{\rm u}^*}\wku\int_{X}\sigt \left(\solPhi{h}{(1)}{{\rm{u}}}+\solPhi{h}{(1)}{{\rm{c}}}\right)\ dr,
\end{align}
\end{subequations}
and the terms that balance at $\epsilon$ are
\begin{subequations}\label{eq:APlim2}
\begin{align}
\label{eq:APlim2a}\sum_{i=1}^{N_{\rm u}^*}\wku&\int_{X}\sigt\psolPsi{h}{{\rm{u}},(2)}{i}\testk{u} + \sum_{e\in\cE_h}\int_e\Omega_{i}^{\rm{u}}\cdot n_e\left(\SRfull\solPsi{h}{{\rm{u}},(1)}\right)_{i}^{\uparrow}\bracket{\testk{u}}\ dS(r) = \sum_{i=1}^{N_{\rm u}^*}\wku\int_X q_i\testk{u},\\
\label{eq:APlim2b}\sum_{i=1}^{N_{\rm c}^*}\wkc&\int_X-\psolPsi{h}{{\rm{c}},(1)}{i}\Omega_{i}^{\rm{c}}\cdot\nabla_r\testk{c}+\sigt\left(\psolPsi{h}{{\rm{c}},(2)}{i} - \solPhi{h}{(2)}{{\rm{c}}}\right)\testk{c} + \siga \solPhi{h}{(0)}{{\rm{c}}}\testk{c}\ dr&\\
\nonumber&+\sum_{e\in\cE_h}\int_e\Omega_{i}^{\rm{c}}\cdot n_e \left(\psolPsi{h}{{\rm{c}},(1)}{i}\right)^{\uparrow}\bracket{\testk{c}}\ dS(r) =\sum_{i=1}^{N_{\rm c}^*}\wkc\int_X\left(\sigt \solPhi{h}{(2)}{{\rm{u}}}-\siga \solPhi{h}{(0)}{{\rm{u}}}\right)\testk{c}\ dr.
%\label{eq:APlim2c}\sum_{i=1}^{N_{\rm u}^*}\wku\sum_{K\in\cT_h}&\int_{K}\sigt\psolPsi{h}{(2)}{i}\testk{u}\ dr + \sum_{e\in\cE_h^i}\int_F\Omega_{i}^{\rm{u}}\cdot n_1\SR\psolPsi{h}{(1)}{i}\bracket{\testk{u}}\ dS(r) \\
%\nonumber&+ \sum_{e\in \cF_{h,k}^{\partial+,u}}\int_F\Omega_{i}^{\rm{u}}\cdot n\SR\psolPsi{h}{(1)}{i}\testk{u}\ dS(r)\\
%\nonumber &=\sum_{i=1}^{N_{\rm u}^*}\wku\int_{X}\left(\sigt\left(\solPhi{h}{(2)}{{\rm{u}}}+\solPhi{h}{(2)}{{\rm{c}}}\right) - \siga\left(\solPhi{h}{(0)}{{\rm{u}}}+\solPhi{h}{(0)}{{\rm{c}}}\right) + q_i\right)\testk{u}\ dr,
\end{align}
\end{subequations}
The contributions of \eqref{eq:hyFVDGc} to the balance equations at order $\epsilon^0$ and $\epsilon$ are omitted in \eqref{eq:APlim1} and \eqref{eq:APlim2} as they will not be used in the analysis that follows.

Let $\cC_{h,1}$ be the subspace of $\cX_{h,1}$ where every element is continuous.  Let
\begin{equation}
J_{{\rm{c}},h}^{(0)}=\sum_{i=1}^{N_{\rm c}^*}\wkc \Omega_{i}^{\rm{c}}\psolPsi{h}{{\rm{c}},(1)}{i}.
\end{equation}
Let $\vecsrc=[q_1,q_2,\ldots,q_{N_{\rm u}^*}]^T$ and $\bar{\vecsrc} = A^{\rm{u}} \vecsrc$.  Let $P_0$ be the orthogonal projection from $L^2(X)$ onto $\cX_{h,0}$ with respect to the usual inner product.
Our main result is the following.
\begin{theorem}\label{thm:mainresult}
Let $\solPsi{h}{{\rm u}, (j)},\solPsi{h}{(0)}\in \cX_{h,0}$ and $\solPsi{h}{{\rm c}, (j)}\in \cX_{h,1}$ solve \eqref{eq:APlim0}, \eqref{eq:APlim1}, and \eqref{eq:APlim2} for $j\in\{0,1,2\}$. 
Additionally, let $\sigt\in\cX_{h,0}$ and $\sigt\ge\siga>0$. %\cdh{$\sigt > \siga > 0$}
Then $\psolPsi{h}{(0)}{i}=P_0\solPhi{h}{(0)}{{\rm{c}}}$ for all $i\le N_{\rm u}^*$.
%$\psolPsi{h}{{\rm{u}},(2)}{i}=\frac{P_0q_i}{\sigt}$ for all $i\le N_{\rm u}^*$, 
Moreover, for all $\testcon\in \cC_{h,1}$ and $\varphi\in (\cX_{h,1})^3$, $J_{{\rm{c}},h}^{(0)}$ and $\solPhi{h}{(0)}{{\rm{c}}}$ satisfy %\cdh{This isn't quite what you want.} 
\begin{subequations}
\begin{align}
\label{eq:APlemord2-0a}\int_X - J_{{\rm{c}},h}^{(0)}\cdot\nabla_r\testcon  + 4\pi\siga \solPhi{h}{(0)}{{\rm{c}}}\testcon\ dr &= 4\pi\int_X \left(P_0\bar{\vecsrc}\right)\testcon\ dr,\\
\label{eq:APlemord2-0b}\int_X\left(\frac{4\pi}{3}\nabla_r \solPhi{h}{(0)}{{\rm{c}}}+\sigt J_{{\rm{c}},h}^{(0)}\right)\cdot \varphi \ dr&=0.
\end{align}
\end{subequations}
\end{theorem}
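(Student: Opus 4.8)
The plan is to read off the diffusion system by matching powers of $\epsilon$, exactly as in \cite{Guermond}, while tracking the two spaces $\cX_{h,0}$ and $\cX_{h,1}$ and the reconstruction operator $\SRfull$. I would first exploit the $\epsilon^{-1}$ balance. Because $\sigt\in\cX_{h,0}$, $\sigt>0$, and each component $v_i$ of a test function in $\cW_{h,0}^{\rm u}$ may be chosen independently, \eqref{eq:APlim0a} forces $\psolPsi{h}{{\rm u},(0)}{i}=0$ for every $i$ (a piecewise-constant function that is $L^2$-orthogonal to all of $\cX_{h,0}$ must vanish), hence $\solPhi{h}{(0)}{{\rm u}}=0$; applying the same argument to \eqref{eq:APlim1a}, whose edge term drops since $\SRfull\solPsi{h}{{\rm u},(0)}=0$, gives $\psolPsi{h}{{\rm u},(1)}{i}=0$ and $\solPhi{h}{(1)}{{\rm u}}=0$. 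With $\solPhi{h}{(0)}{{\rm u}}=0$, equation \eqref{eq:APlim0b} tested componentwise over $\cX_{h,1}$ yields $\psolPsi{h}{{\rm c},(0)}{i}=\solPhi{h}{(0)}{{\rm c}}$ (isotropy of the leading collided flux). Finally, \eqref{eq:APlim0c} tested against $v_i\in\cX_{h,0}$ reduces, cell by cell, to the statement that $\psolPsi{h}{(0)}{i}$ equals the cell average of $\solPhi{h}{(0)}{{\rm c}}$; since $P_0$ is precisely the $L^2$-projection onto piecewise constants, this is the first claim $\psolPsi{h}{(0)}{i}=P_0\solPhi{h}{(0)}{{\rm c}}$.

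The crux is to show that $\solPhi{h}{(0)}{{\rm c}}$ is continuous, i.e. lies in $\cC_{h,1}$. I would test the $\epsilon^0$ collided balance \eqref{eq:APlim1b} with the isotropic choice $v_i=\solPhi{h}{(0)}{{\rm c}}$ (the same function in every angular component) and use the assumed quadrature moments. The first-moment identity $\sum_i\wkc\Omega_{i}^{\rm c}=0$ kills the volume advection term, and $\sum_i\wkc(\psolPsi{h}{{\rm c},(1)}{i}-\solPhi{h}{(1)}{{\rm c}})=0$ kills the removal term (the right-hand side vanishes because $\solPhi{h}{(1)}{{\rm u}}=0$). What survives is the edge sum; using $\psolPsi{h}{{\rm c},(0)}{i}=\solPhi{h}{(0)}{{\rm c}}$ and splitting each edge into the directions for which $\Omega_{i}^{\rm c}\cdot n_e$ has either sign, the upwind traces reorganize into $-\tfrac12\bigl(\sum_i\wkc|\Omega_{i}^{\rm c}\cdot n_e|\bigr)\llbracket\solPhi{h}{(0)}{{\rm c}}\rrbracket$, so the equation collapses to $-\sum_{e\in\cE_h}\int_e a_e\,\llbracket\solPhi{h}{(0)}{{\rm c}}\rrbracket^2\,dS=0$ with $a_e=\tfrac12\sum_i\wkc|\Omega_{i}^{\rm c}\cdot n_e|>0$. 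This forces $\llbracket\solPhi{h}{(0)}{{\rm c}}\rrbracket=0$ on every edge, so $\solPhi{h}{(0)}{{\rm c}}\in\cC_{h,1}$ and, via the zero-exterior convention together with $\psi_b=0$, vanishes on $\partial X$. I expect this coercivity step to be the main obstacle, since it is the only place where the upwind numerical traces and the sign structure of the quadrature must conspire, and it is what separates a diffusion-limit-preserving DG discretization of the collided flux from a naive one.

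To obtain the constitutive law \eqref{eq:APlemord2-0b}, I would test \eqref{eq:APlim1b} with $v_i=\Omega_{i}^{\rm c}\cdot\varphi$ for $\varphi\in(\cX_{h,1})^3$. The second-moment identity $\sum_i\wkc\Omega_{i}^{\rm c}\otimes\Omega_{i}^{\rm c}=\tfrac{4\pi}{3}I$ turns the volume advection term into $-\tfrac{4\pi}{3}\int_X\solPhi{h}{(0)}{{\rm c}}\,\nabla_r\cdot\varphi\,dr$ and the edge term into $\tfrac{4\pi}{3}\sum_{e}\int_e\solPhi{h}{(0)}{{\rm c}}\,n_e\cdot\llbracket\varphi\rrbracket\,dS$ (continuity of $\solPhi{h}{(0)}{{\rm c}}$ makes its upwind trace single-valued); the removal term reduces to $\int_X\sigt J_{{\rm c},h}^{(0)}\cdot\varphi\,dr$ by the definition of $J_{{\rm c},h}^{(0)}$ and the vanishing first moment, while the right-hand side is again zero. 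A cellwise integration by parts on $\int_X\solPhi{h}{(0)}{{\rm c}}\nabla_r\cdot\varphi$, using continuity of $\solPhi{h}{(0)}{{\rm c}}$ to cancel the resulting interior edge contributions against the edge term above, leaves exactly \eqref{eq:APlemord2-0b}.

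The conservation law \eqref{eq:APlemord2-0a} comes from the $\epsilon$-order balance \eqref{eq:APlim2b} tested with the isotropic continuous choice $v_i=\testcon\in\cC_{h,1}$. The advection term becomes $-\int_X J_{{\rm c},h}^{(0)}\cdot\nabla_r\testcon\,dr$; the removal term vanishes by the zeroth/first-moment argument; the absorption term gives $4\pi\int_X\siga\solPhi{h}{(0)}{{\rm c}}\testcon\,dr$ (using $\sum_i\wkc=4\pi$); and the interior edge terms vanish since $\testcon$ is continuous. For the right-hand side, I would bring in the $\epsilon$-order uncollided balance \eqref{eq:APlim2a}: its edge term drops because $\SRfull\solPsi{h}{{\rm u},(1)}=0$, and testing componentwise over $\cX_{h,0}$ gives $\sigt\psolPsi{h}{{\rm u},(2)}{i}=P_0 q_i$, whence $\sigt\solPhi{h}{(2)}{{\rm u}}=P_0\bar{\vecsrc}$ after averaging in angle. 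Substituting this (and $\solPhi{h}{(0)}{{\rm u}}=0$) into the right-hand side of \eqref{eq:APlim2b} produces $4\pi\int_X(P_0\bar{\vecsrc})\testcon\,dr$, which is \eqref{eq:APlemord2-0a}. The only remaining point needing care is the treatment of the exterior-edge contributions, which are controlled by the homogeneous inflow data $\psi_b=0$ together with the zero-exterior jump convention.
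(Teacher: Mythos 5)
Your proposal is correct and follows essentially the same route as the paper: the same $\epsilon^{-1}$, $\epsilon^{0}$, $\epsilon^{1}$ bookkeeping, the same coercivity argument on the upwind edge terms to force $\llbracket\solPhi{h}{(0)}{{\rm{c}}}\rrbracket=0$, and the same choices of test functions ($\varphi\cdot\Omega_{i}^{\rm{c}}$ for the constitutive law, continuous $\testcon$ for the conservation law, and $\sigt\psolPsi{h}{{\rm{u}},(2)}{i}-P_0q_i$ for the source term). Your edge identity with the factor $-\tfrac12\sum_i\wkc|\Omega_{i}^{\rm{c}}\cdot n_e|$ is in fact the careful version of the paper's \eqref{eq:lem2eq3}, and the discrepancy in sign and constant does not affect the conclusion.
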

Theorem \ref{thm:mainresult} is a consistent discretization of the first-order, steady-state form of the diffusion limit \eqref{eq:difflimeq}. To prove it we first require some preliminary lemmas.

\begin{lemma}\label{lem:AP0}
Let $\solPsi{h}{{\rm{u}},(0)},\solPsi{h}{{\rm{c}},(0)},$ and $\solPsi{h}{(0)}$ solve \eqref{eq:APlim0}. Then $\psolPsi{h}{{\rm{u}},(0)}{i}=0$ for all $i\le N_{\rm u}^*$, $\psolPsi{h}{{\rm{c}},(0)}{i}=\solPhi{h}{(0)}{{\rm{c}}}$ for all $i\le N_{\rm c}^*$, and $\psolPsi{h}{(0)}{i}=P_0\solPhi{h}{(0)}{{\rm{c}}}$ for all $i\le N_{\rm u}^*$.
\end{lemma}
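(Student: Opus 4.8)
The plan is to extract from the three weak balance equations \eqref{eq:APlim0a}--\eqref{eq:APlim0c} one scalar variational identity per angular ordinate, and then read off the claimed pointwise relations. The structural fact that makes this possible is that at order $\epsilon^{-1}$ every surviving term is diagonal in the angular index: only the total-interaction term $\sigt\psi_i$ and the isotropic scattering source (which couples only through the angular averages $\solPhi{h}{(0)}{{\rm u}}$ and $\solPhi{h}{(0)}{{\rm c}}$) appear, while the streaming and upwind interface contributions are $O(1)$ and therefore do not enter at this order. Consequently each equation still holds when the vector test function is supported on a single ordinate $i$, yielding for each $i$ a scalar identity tested against $v\in\cX_{h,0}$ (for the uncollided and full solves) or $v\in\cX_{h,1}$ (for the collided solve). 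I would establish the three conclusions in order, using each to simplify the next.

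First I would treat the uncollided term. Localizing \eqref{eq:APlim0a} to ordinate $i$ gives $\int_X\sigt\,\psolPsi{h}{{\rm{u}},(0)}{i}\,v\,dr=0$ for all $v\in\cX_{h,0}$. Choosing $v=\psolPsi{h}{{\rm{u}},(0)}{i}$, which itself lies in $\cX_{h,0}$, and using the standing positivity $\sigt\ge\siga>0$ forces $\psolPsi{h}{{\rm{u}},(0)}{i}=0$ for every $i$; in particular $\solPhi{h}{(0)}{{\rm{u}}}=A^{\rm u}\solPsi{h}{{\rm{u}},(0)}=0$.

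Next I would treat the collided term. Localizing \eqref{eq:APlim0b} gives $\int_X\sigt\bigl(\psolPsi{h}{{\rm{c}},(0)}{i}-\solPhi{h}{(0)}{{\rm{c}}}-\solPhi{h}{(0)}{{\rm{u}}}\bigr)v\,dr=0$ for all $v\in\cX_{h,1}$. Since $\solPhi{h}{(0)}{{\rm{u}}}\in\cX_{h,0}\subseteq\cX_{h,1}$, the bracketed quantity belongs to $\cX_{h,1}$ and is thus an admissible test function; testing against it and invoking $\sigt>0$ yields $\psolPsi{h}{{\rm{c}},(0)}{i}=\solPhi{h}{(0)}{{\rm{c}}}+\solPhi{h}{(0)}{{\rm{u}}}$. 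Substituting $\solPhi{h}{(0)}{{\rm{u}}}=0$ from the previous step gives $\psolPsi{h}{{\rm{c}},(0)}{i}=\solPhi{h}{(0)}{{\rm{c}}}$ for all $i$.

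Finally I would treat the full term, which is where the main obstacle lies. Localizing \eqref{eq:APlim0c} and inserting $\solPhi{h}{(0)}{{\rm{u}}}=0$ gives $\int_X\sigt\bigl(\psolPsi{h}{(0)}{i}-\solPhi{h}{(0)}{{\rm{c}}}\bigr)v\,dr=0$ for all $v\in\cX_{h,0}$. Here the difference cannot simply be set to zero, because $\psolPsi{h}{(0)}{i}\in\cX_{h,0}$ whereas $\solPhi{h}{(0)}{{\rm{c}}}\in\cX_{h,1}$ is in general not piecewise constant. The device I would use is that $\sigt\in\cX_{h,0}$ is piecewise constant and strictly positive, so $v\mapsto\sigt v$ is a bijection of $\cX_{h,0}$; absorbing $\sigt$ into the test function recasts the identity as $\int_X\bigl(\psolPsi{h}{(0)}{i}-\solPhi{h}{(0)}{{\rm{c}}}\bigr)w\,dr=0$ for all $w\in\cX_{h,0}$, i.e.\ $P_0\bigl(\psolPsi{h}{(0)}{i}-\solPhi{h}{(0)}{{\rm{c}}}\bigr)=0$. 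Since $\psolPsi{h}{(0)}{i}\in\cX_{h,0}$ equals its own projection, this is precisely $\psolPsi{h}{(0)}{i}=P_0\solPhi{h}{(0)}{{\rm{c}}}$. The \emph{hard part} is exactly this last relation: it is where the two different polynomial spaces of the FV/DG hybrid interact, and the appearance of the projection $P_0$ (rather than a direct equality) is a genuine feature that becomes rigorous only through the step of using $\sigt\in\cX_{h,0}$ together with $\sigt>0$ to convert the $\sigt$-weighted orthogonality into plain $L^2$-orthogonality against $\cX_{h,0}$.
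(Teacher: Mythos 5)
Your proposal is correct and takes essentially the same approach as the paper: test each order-$\epsilon^{-1}$ balance against the relevant solution difference and use $\sigt>0$ together with the positive quadrature weights, with the third claim resting on $\sigt\in\cX_{h,0}$. The only cosmetic difference is in that last step, where you absorb the piecewise-constant $\sigt$ into the test function to reduce to plain $L^2$-orthogonality against $\cX_{h,0}$, whereas the paper tests directly with $\psolPsi{h}{(0)}{i}-P_0\solPhi{h}{(0)}{{\rm{c}}}$ and invokes the projection identity; both arguments use the same ingredients.
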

\begin{proof}
Let $\testk{u}= \psolPsi{h}{{\rm{u}},(0)}{i}$ for all $i\le N_{\rm u}^*$ in \eqref{eq:APlim0a}. Then
\begin{equation}\label{eq:APlem1eq1}
\sum_{i=1}^{N_{\rm u}^*}\wku\int_X\sigt\left(\psolPsi{h}{{\rm{u}},(0)}{i}\right)^2\ dr=0.
\end{equation}
Since $\sigt>0$ and $\wku>0$ for all $i\le N_{\rm u}^*$, it follows from \eqref{eq:APlem1eq1} that $\psolPsi{h}{{\rm{u}},(0)}{i}=0$ for all $i\le N_{\rm u}^*$.

We now show that $\psolPsi{h}{{\rm{c}},(0)}{i} = \solPhi{h}{(0)}{{\rm{c}}}$ for all $i\le N_{\rm c}^*$.
Let $\testk{c} = \psolPsi{h}{{\rm{c}},(0)}{i}-\solPhi{h}{(0)}{{\rm{c}}}$ for all $i\le N_{\rm c}^*$. Then \eqref{eq:APlim0b} becomes
\begin{equation}\label{eq:APlem1eq2}
\sum_{i=1}^{N_{\rm c}^*}\wkc\int_X\sigt\left(\psolPsi{h}{{\rm{c}},(0)}{i} - \solPhi{h}{(0)}{{\rm{c}}}\right)^2\ dr=0.
\end{equation}
Since $\sigt>0$ and $\wkc>0$ for all $i\le N_{\rm c}^*$, \eqref{eq:APlem1eq2} implies $\psolPsi{h}{{\rm{c}},(0)}{i} = \solPhi{h}{(0)}{{\rm{c}}}$ for all $i\le N_{\rm c}^*$.

We now show that $\psolPsi{h}{(0)}{i}=P_0\solPhi{h}{(0)}{{\rm{c}}}$ for all $i\le N_{\rm u}^*$. Let $\testk{u}=\psolPsi{h}{(0)}{i}-P_0\solPhi{h}{(0)}{{\rm{c}}}$ for all $i\le N_{\rm u}^*$ in \eqref{eq:APlim0c}. Since $\solPhi{h}{(0)}{{\rm u}}=0$, $\sigt\in \cX_{h,0}$, and $P_0$ is the orthogonal projection onto $\cX_{h,0}$, it follows then that
\begin{align}\label{eq:APlem1eq3}
%0&=\sum_{i=1}^{N_{\rm u}^*}\wku\sum_{K\in\cT_h}\int_{K}\sigt\left(\psolPsi{h}{(0)}{i}-\solPhi{h}{(0)}{{\rm{c}}}\right)\testk{u}\ dr\\
%\nonumber&=\sum_{i=1}^{N_{\rm u}^*}\wku\sum_{K\in\cT_h}\int_{K}\sigt P_0\left(\psolPsi{h}{(0)}{i}-\solPhi{h}{(0)}{{\rm{c}}}\right)\testk{u}\ dr\\
%\nonumber&=\sum_{i=1}^{N_{\rm u}^*}\wku\int_{X}\sigt \left(\psolPsi{h}{(0)}{i}-P_0\solPhi{h}{(0)}{{\rm{c}}}\right)^2\ dr.
\sum_{i=1}^{N_{\rm u}^*}\wku\int_{X}\sigt \left(\psolPsi{h}{(0)}{i}-P_0\solPhi{h}{(0)}{{\rm{c}}}\right)^2\ dr=0.
\end{align}
Since $\sigt>0$ and $\wku>0$ for all $i\le N_{\rm u}^*$, \eqref{eq:APlem1eq3} implies $\psolPsi{h}{(0)}{i}=P_0\solPhi{h}{(0)}{{\rm{c}}}$ for all $i\le N_{\rm u}^*$.
\end{proof}

\begin{lemma}\label{lem:AP1}
Let $\solPsi{h}{{\rm{u}},(0)}$, $\solPsi{h}{{\rm{c}},(0)}$, $\solPsi{h}{{\rm{u}},(1)}$, $\solPsi{h}{{\rm{c}},(1)}$ solve the system \eqref{eq:APlim0a}, \eqref{eq:APlim0b}, and  \eqref{eq:APlim1}.  Then $\psolPsi{h}{{\rm{u}},(1)}{i}=0$ for all $i\le N_{\rm u}^*$, and $\solPhi{h}{(0)}{{\rm{c}}}$ is continuous on $X$.
%\begin{equation}
%\sum_{e\in\cE_h}\int_{e}\bracket{\solPhi{h}{(0)}{{\rm{c}}}}\bracket{A^{\rm{c}}\test{\rm c}}\ dS(r) =0.
%\end{equation}
\end{lemma}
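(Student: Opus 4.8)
The plan is to establish the two assertions in sequence, since the first feeds directly into the second; both rely on energy-type test-function arguments in the spirit of Lemma \ref{lem:AP0}, together with the quadrature symmetry conditions assumed before Theorem \ref{thm:mainresult}. First I would dispatch $\psolPsi{h}{{\rm{u}},(1)}{i}=0$. By Lemma \ref{lem:AP0}, $\psolPsi{h}{{\rm{u}},(0)}{i}=0$ for every $i$, so $\solPsi{h}{{\rm{u}},(0)}$ is the zero element of $\cW_{h,0}^{\rm u}$; since the reconstruction $\SRfull$ is linear, $\SRfull\solPsi{h}{{\rm{u}},(0)}=0$ and the entire edge sum in \eqref{eq:APlim1a} drops out. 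What remains is $\sum_{i}\wku\int_X\sigt\,\psolPsi{h}{{\rm{u}},(1)}{i}\testk{u}\,dr=0$ for all admissible test functions. Choosing $\testk{u}=\psolPsi{h}{{\rm{u}},(1)}{i}$ componentwise and using $\sigt>0$ and $\wku>0$ forces each $\psolPsi{h}{{\rm{u}},(1)}{i}=0$. A byproduct I will need below is that $\solPhi{h}{(1)}{{\rm{u}}}=A^{\rm u}\solPsi{h}{{\rm{u}},(1)}=0$.

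For the continuity of $\solPhi{h}{(0)}{{\rm{c}}}$ I would work with \eqref{eq:APlim1b}, using Lemma \ref{lem:AP0} to replace $\psolPsi{h}{{\rm{c}},(0)}{i}$ by the angle-independent $\solPhi{h}{(0)}{{\rm{c}}}$, then testing with a function constant in angle, i.e.\ $\testk{c}=w$ for all $i$ with $w\in\cX_{h,1}$, and summing over $i$. The quadrature condition $\sum_i\wkc\Omega_i^{\rm c}=0$ annihilates the volume streaming term; the difference term vanishes because $\sum_i\wkc(\psolPsi{h}{{\rm{c}},(1)}{i}-\solPhi{h}{(1)}{{\rm{c}}})=0$ by the definition of $\solPhi{h}{(1)}{{\rm{c}}}$ as an angular average; and the right-hand side vanishes because $\solPhi{h}{(1)}{{\rm{u}}}=0$ from the previous step. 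The only surviving contribution is the edge sum, which I claim collapses to a signed jump penalty. Indeed, since $(\solPhi{h}{(0)}{{\rm{c}}})^{\uparrow}$ equals the trace $(\solPhi{h}{(0)}{{\rm{c}}})^-$ when $\Omega_i^{\rm c}\cdot n_e>0$ and $(\solPhi{h}{(0)}{{\rm{c}}})^+$ when $\Omega_i^{\rm c}\cdot n_e<0$, splitting $\sum_i\wkc\,\Omega_i^{\rm c}\cdot n_e\,(\solPhi{h}{(0)}{{\rm{c}}})^{\uparrow}$ according to the sign of $\Omega_i^{\rm c}\cdot n_e$ and invoking $\sum_i\wkc\,\Omega_i^{\rm c}\cdot n_e=0$ yields $-\lambda_e\llbracket\solPhi{h}{(0)}{{\rm{c}}}\rrbracket$, where $\lambda_e:=\sum_{\{i:\,\Omega_i^{\rm c}\cdot n_e>0\}}\wkc\,\Omega_i^{\rm c}\cdot n_e>0$. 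Thus the tested equation reduces to $\sum_{e\in\cE_h}\int_e\lambda_e\llbracket\solPhi{h}{(0)}{{\rm{c}}}\rrbracket\llbracket w\rrbracket\,dS(r)=0$ for all $w\in\cX_{h,1}$. Since $\solPhi{h}{(0)}{{\rm{c}}}\in\cX_{h,1}$ itself, I would finally take $w=\solPhi{h}{(0)}{{\rm{c}}}$, obtaining $\sum_e\int_e\lambda_e\llbracket\solPhi{h}{(0)}{{\rm{c}}}\rrbracket^2\,dS(r)=0$; positivity of each $\lambda_e$ then forces $\llbracket\solPhi{h}{(0)}{{\rm{c}}}\rrbracket=0$ on every edge, so $\solPhi{h}{(0)}{{\rm{c}}}$ is continuous across all interior edges and hence continuous on $X$ (in fact $\solPhi{h}{(0)}{{\rm{c}}}\in\cC_{h,1}$).

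The main obstacle is the edge-term identity: verifying that the directional upwind fluxes, once weighted by $\wkc\,\Omega_i^{\rm c}\cdot n_e$ and summed over angle, reassemble into the symmetric, definite-sign penalty $-\lambda_e\llbracket\solPhi{h}{(0)}{{\rm{c}}}\rrbracket$. This is precisely where upwinding and the first-moment quadrature condition interact, and care is needed with the sign and normal conventions for interior versus exterior edges (the latter handled by the zero-exterior convention fixed in the setup, which on $\cE_h^{\rm ex}$ turns the same penalty into the boundary condition $\solPhi{h}{(0)}{{\rm{c}}}=0$ on $\partial X$). Once this identity is in hand, the positivity argument is routine.
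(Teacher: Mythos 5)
Your proposal is correct and follows essentially the same route as the paper: Lemma \ref{lem:AP0} kills the reconstruction/edge term in \eqref{eq:APlim1a} so the energy argument gives $\psolPsi{h}{{\rm{u}},(1)}{i}=0$, and then testing \eqref{eq:APlim1b} with an angle-independent function (the paper takes $\testk{c}=\solPhi{h}{(0)}{{\rm{c}}}$ directly rather than a general $w$ first) annihilates the volume terms via the quadrature moment conditions and reduces the edge sum to a definite-sign jump penalty forcing $\llbracket\solPhi{h}{(0)}{{\rm{c}}}\rrbracket=0$. Your explicit derivation of the coefficient $\lambda_e=\sum_{\{i:\,\Omega_i^{\rm c}\cdot n_e>0\}}\wkc\,\Omega_i^{\rm c}\cdot n_e$ is, if anything, slightly more careful about the sign and constant in the edge identity than the paper's \eqref{eq:lem2eq3}.
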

\begin{proof}
According to Lemma \ref{lem:AP0}, $\solPsi{h}{\rm{u},(0)}=0$.  Hence \eqref{eq:APlim1a} becomes
\begin{equation}
\label{eq:APlim0areduced}\sum_{i=1}^{N_{\rm u}^*}\wku\sum_{K\in\cT_h}\int_{K}\sigt\psolPsi{h}{{\rm{u}},(1)}{i}\testk{u} = 0.\\
\end{equation}
Similar to the proof of Lemma \ref{lem:AP0}, this implies $\psolPsi{h}{{\rm{u}},(1)}{i}=0$ for all $i\le N_{\rm u}^*$.

We next show that $\solPhi{h}{(0)}{{\rm{c}}}$ is a continuous function on $X$, in particular by showing that it is continuous across the cell edges.
Let $\testk{c}=\solPhi{h}{(0)}{{\rm c}}$ for all $1\le i\le N_{\rm c}^*$. From Lemma \ref{lem:AP0}, $\psolPsi{h}{{\rm{c}},(0)}{i}=\solPhi{h}{(0)}{{\rm{c}}}$ for all $i\le N_{\rm c}^*$.  Using the accuracy of the quadrature, which calculates the integral of degree one polynomials exactly, the first term in \eqref{eq:APlim1b} becomes 
\begin{equation}\label{eq:lem2eq1}
- \sum_{i=1}^{N_{\rm c}^*}\wkc \Omega_{i}^{\rm{c}}\cdot  \int_X\solPhi{h}{(0)}{{\rm{c}}}\nabla_r\solPhi{h}{(0)}{{\rm c}}\ dr=0.
\end{equation}
%\begin{equation}\label{eq:lem2eq1}
%\sum_{i=1}^{N_{\rm c}^*}\wkc\int_X-\solPhi{h}{(0)}{{\rm{c}}}\Omega_{i}^{\rm{c}}\cdot\nabla_r\solPhi{h}{(0)}{{\rm c}}\ dr
%=\int_X-\solPhi{h}{(0)}{{\rm{c}}}\left(\sum_{i=1}^{N_{\rm c}^*}\wkc\Omega_{i}^{\rm{c}}\right)\cdot\nabla_r\solPhi{h}{(0)}{{\rm c}}\ dr=0.
%\end{equation}
%\begin{equation}
%\sum_{i=1}^{N_{\rm c}^*}\wkc\sum_{K\in\cT_h}\int_K-\psolPsi{h}{{\rm{c}},(0)}{i}\Omega_{i}^{\rm{c}}\cdot\nabla_r\testk{c}\ dr= \sum_{K\in\cT_h}\int_K-A^{\rm{c}}\psolPsi{h}{{\rm{c}},(0)}{i}\left(\sum_{k=0}^{N_{\rm c}^*}\wkc\Omega_{i}^{\rm{c}}\right)\cdot\nabla_r A^{\rm{c}}\test{\rm c}\ dr = 0.
%\end{equation}
The second term in \eqref{eq:APlim1b} also vanishes because
\begin{equation}\label{eq:lem2eq2}
\sum_{i=1}^{\nnumc}\wkc\left(\psolPsi{h}{{\rm{c}},(1)}{i} - \solPhi{h}{(1)}{{\rm{c}}}\right) = 0.
\end{equation}
%\begin{equation}
%\sum_{i=1}^{N_{\rm c}^*}\wkc\sum_{K\in\cT_h}\int_K\sigt\left(\psolPsi{h}{{\rm{c}},(1)}{i} - \solPhi{h}{(1)}{{\rm{c}}}\right)A^{\rm{c}}\test{\rm c}\ dr=\sum_{K\in\cT_h}\int_K\sigt\left(\left(\sum_{i=1}^{N_{\rm c}^*}\wkc\psolPsi{h}{{\rm{c}},(1)}{i}\right) - 4\pi \solPhi{h}{(1)}{{\rm{c}}}\right)A^{\rm{c}}\test{\rm c}\ dr = 0.
%\end{equation}
%\begin{equation}\label{eq:lem2eq2}
%\sum_{i=1}^{\nnumc}\wkc\int_X\sigt\left(\psolPsi{h}{{\rm{c}},(1)}{i} - \solPhi{h}{(1)}{{\rm{c}}}\right)\solPhi{h}{(0)}{{\rm c}}\ dr=\int_X\sigt\left(\sum_{i=1}^{\nnumc}\wkc\left(\psolPsi{h}{{\rm{c}},(1)}{i} - \solPhi{h}{(1)}{{\rm{c}}}\right)\right)\solPhi{h}{(0)}{{\rm c}}=0.
%\end{equation}
%Now let $\testk{c}=\solPhi{h}{(0)}{{\rm c}}$ for all $1\le i\le\nnumc$. 
The third term in \eqref{eq:APlim1b} is
\begin{equation}\label{eq:lem2eq3}
\sum_{i=1}^{N_{\rm c}^*}\wkc\sum_{e\in\cE_h}\int_e\Omega_{i}^{\rm{c}}\cdot n_e \left(\solPhi{h}{(0)}{{\rm{c}}}\right)^{\uparrow}\bracket{\solPhi{h}{(0)}{{\rm{c}}}}\ dS(r) 
 = \sum_{e\in\cE_h}\int_e\left(\sum_{i=1}^{N_{\rm c}^*}\wkc|\Omega_{i}^{\rm{c}}\cdot n_e|\right)\bracket{\solPhi{h}{(0)}{{\rm{c}}}}^2 dS(r),
%\nonumber&+ \sum_{e\in \cF_{h}^{\partial}}\int_{F}\left(\sum_{i=1}^{N_{\rm c}^*}\wkc|\Omega_{i}^{\rm{c}}\cdot n_1|\right)\solPhi{h}{(0)}{{\rm{c}}}A^{\rm{c}}\test{\rm c}\ dS(r)\\
%\nonumber =&\sum_{e\in\cF^i_h}\int_F\bracket{\solPhi{h}{(0)}{{\rm{c}}}}\bracket{A^{\rm{c}}\test{\rm c}} dS(r)
%+ \sum_{e\in \cF_{h}^{\partial}}\int_{F} \solPhi{h}{(0)}{{\rm{c}}}A^{\rm{c}}\test{\rm c}\ dS(r).
\end{equation}
while the right-hand side of \eqref{eq:APlim1b} vanishes.  Therefore, since $\wkc >0$, it follows from
\eqref{eq:lem2eq3} that $\bracket{\solPhi{h}{(0)}{{\rm{c}}}}=0$ for every edge in $\cE_h$.
\end{proof}
%\cdh{May need to define the space that $\sigt$ and $\siga$ belong to . Need to project $\sigt$ onto $\cX_h$ for the next proof. I will assume for now that $\sigt,\siga\in L^2(X)$.}

%For the next lemma define $\cC_{h,1}$ to be the subspace of $\cX_{h,1}$ where every element is continuous, and define the following:
%\begin{equation}
%J_{{\rm{c}},h}^{(0)}=\sum_{i=1}^{N_{\rm c}^*}\wkc \Omega_{i}^{\rm{c}}\psolPsi{h}{{\rm{c}},(1)}{i}.
%\end{equation}
%Equations \eqref{eq:APlim1b} and \eqref{eq:APlim2} imply the following.
%\begin{lemma}
%$\psolPsi{h}{{\rm{u}},(2)}{i}=\frac{P_0q_i}{\sigt}$ for all $i\le N_{\rm u}^*$, and $J_{{\rm{c}},h}^{(0)}$ and $\solPhi{h}{(0)}{{\rm{c}}}$ satisfy the following for all $\varphi\in (\cX_{h,1})^3$:
%\begin{subequations}
%\begin{align}
%\label{eq:APlemord2-0a}\int_X - J_{{\rm{c}},h}^{(0)}\cdot\nabla_r \bar{\test{\rm c}} + 4\pi\siga \solPhi{h}{(0)}{{\rm{c}}}\bar{\test{\rm c}}\ dr &= \int_X \left(P_0\bar{\vecsrc}\right)\bar{\test{\rm c}}\ dr,\\
%\label{eq:APlemord2-0b}\int_X\left(\frac{4\pi}{3}\nabla_r \solPhi{h}{(0)}{{\rm{c}}}+\sigt J_{{\rm{c}},h}^{(0)}\right)\cdot \varphi \ dr&=0.
%\end{align}
%\end{subequations}
%\end{lemma}
\begin{proof}[Proof of Theorem \ref{thm:mainresult}]
We first show that $\psolPsi{h}{{\rm{u}},(2)}{i}=\sigt^{-1}{P_0q_i}$ for all $i\le N_{\rm u}^*$. By Lemmas \ref{lem:AP0} and \ref{lem:AP1}, $\solPsi{h}{\rm{u},(0)}=0$ and $\solPsi{h}{\rm{u},(1)}=0$. This implies that \eqref{eq:APlim2a} becomes
\begin{equation}\label{eq:APlemord2-1}
\sum_{i=1}^{N_{\rm u}^*}\wku\int_{X}\left(\sigt\psolPsi{h}{{\rm{u}},(2)}{i}-q_i\right)\testk{u} = 0.
\end{equation} 
Let $\testk{u}=\sigt\psolPsi{h}{{\rm{u}},(2)}{i}-P_0q_i$ for all $i\le N^*_{\rm{u}}$. Since $P_0$ is the orthogonal projection onto $\cX_{h,0}$, \eqref{eq:APlemord2-1} implies
\begin{equation}\label{eq:APthm1eq2}
\sum_{i=1}^{N_{\rm u}^*}\wku\int_{X}\left(\sigt\psolPsi{h}{{\rm{u}},(2)}{i}-P_0q_i\right)^2= 0.
\end{equation}
%\begin{equation}
%\sum_{i=1}^{N_{\rm u}^*}\wku\sum_{K\in\cT_h}\int_{K}\left(\left(P_0\sigt\right)\psolPsi{h}{{\rm{u}},(2)}{i}-\frac{1}{4\pi}P_0q_i\right)\testk{u} = 0.
%\end{equation}
%We see then that $\left(P_0\sigt\right)\psolPsi{h}{{\rm{u}},(2)}{i}-\frac{1}{4\pi}P_0q_i\in \cX_{h,0}$. Suppose $\testk{u}=\left(P_0\sigt\right)\psolPsi{h}{{\rm{u}},(2)}{i}-\frac{1}{4\pi}P_0q_i$. This implies
%\begin{equation}
%\sum_{i=1}^{N_{\rm u}^*}\wku\sum_{K\in\cT_h}\int_{K}\left(\left(P_0\sigt\right)\psolPsi{h}{{\rm{u}},(2)}{i}-\frac{1}{4\pi}P_0q_i\right)^2= 0.
%\end{equation}
Similar to the proof of Lemma \ref{lem:AP0}, \eqref{eq:APthm1eq2} implies $\psolPsi{h}{{\rm{u}},(2)}{i}=\sigt^{-1} {P_0q_i}$,
%\begin{equation}\label{eq:APlemord2-2}
%
%\end{equation}
and, as a consequence,
\begin{equation}\label{eq:APlemord2-3}
\solPhi{h}{(2)}{{\rm{u}}}=\sigt^{-1} P_0\bar{\vecsrc}.
\end{equation}

We have shown that $\psolPsi{h}{(0)}{i}=P_0\solPhi{h}{(0)}{{\rm{c}}}$ in Lemma \ref{lem:AP0}. We now show that $\solPhi{h}{(0)}{{\rm{c}}}$ and $J_{{\rm{c}},h}^{(0)}$ satisfy \eqref{eq:APlemord2-0a}. 
Lemma \ref{lem:AP1} implies that $\solPhi{h}{(0)}{{\rm{c}}}\in \cC_{h,1}$.
Let $\testk{c}=\testcon\in \cX_{h,1}$ for all $1\le i\le\nnumc$. 
Then the first term of \eqref{eq:APlim2b} becomes
\begin{equation}\label{eq:Thmgrad}
\sum_{i=1}^{N_{\rm c}^*}\wkc\int_X-\psolPsi{h}{{\rm{c}},(1)}{i}\Omega_{i}^{\rm{c}}\cdot\nabla_r \testcon
%=\int_X-\left(\sum_{i=1}^{N_{\rm c}^*}\wkc\psolPsi{h}{{\rm{c}},(1)}{i}\Omega_{i}^{\rm{c}}\right)\cdot\nabla_r \testcon
= - \int_X J_{{\rm{c}},h}^{(0)}\cdot\nabla_r \testcon.
\end{equation}
The subsequent terms involving $\psolPsi{h}{{\rm{c}},(2)}{i}$ and $\solPhi{h}{(2)}{{\rm{c}}}$ in \eqref{eq:APlim2b} will cancel owing to the definition of $\solPhi{h}{(2)}{{\rm{c}}}$. The next term involving $\solPhi{h}{(0)}{{\rm{c}}}$ is simply
\begin{equation}\label{eq:Thmabs}
\sum_{i=1}^{N_{\rm c}^*}\wkc\int_X\siga \solPhi{h}{(0)}{{\rm{c}}}\testcon\ dr = \int_X 4\pi\siga \solPhi{h}{(0)}{{\rm{c}}}\testcon\ dr.
\end{equation}
We now restrict the test functions to be continuous on $X$. As a result the jump term in \eqref{eq:APlim2b} becomes 
% the term involving the jump of the test function is zero. 
 \begin{equation}
\sum_{e\in\cE_h}\int_e\Omega_{i}^{\rm{c}}\cdot n_e \left(\psolPsi{h}{{\rm{c}},(1)}{i}\right)^{\uparrow}\bracket{\testcon}\ dS(r) = 0. 
 \end{equation}
 Using the fact that $\psolPsi{h}{{\rm{u}},(0)}{i}=0$ (from Lemma \ref{lem:AP0}) and \eqref{eq:APlemord2-3}, the right side of \eqref{eq:APlim2b} becomes 
\begin{equation}\label{eq:Thmsrc}
\sum_{i=1}^{N_{\rm c}^*}\wkc\int_X\sigt \solPhi{h}{(2)}{{\rm{u}}}\testcon\ dr = \int_X 4\pi P_0\bar{\vecsrc}\testcon\ dr.
\end{equation}
Combining the results in \eqref{eq:Thmgrad}, \eqref{eq:Thmabs}, and \eqref{eq:Thmsrc}, \eqref{eq:APlim2b} becomes
\begin{equation}
\int_X - J_{{\rm{c}},h}^{(0)}\cdot\nabla_r \testcon+ 4\pi\siga \solPhi{h}{(0)}{{\rm{c}}}\testcon\ dr = \int_X 4\pi\sigt P_0\bar{\vecsrc}\testcon\ dr.
\end{equation} 

Now we show that $\solPhi{h}{(0)}{{\rm{c}}}$ and $J_{{\rm{c}},h}^{(0)}$ satisfy \eqref{eq:APlemord2-0b}. 
Let $\testk{\rm c}=\varphi\cdot\Omega_{i}^{\rm{c}}$ for all $1\le i\le \nnumc$, where $\varphi\in\left(\cX_{h,1}\right)^3$ is arbitrary.
%We will test \eqref{eq:APlim1b} using $\left(\varphi_{\rm{c}}\right)_{j}(r)=\bar{\test{\rm c}}\Omega_{i}^{\rm{c}}\cdot e_j$, with $\test{\rm c}\in \cW_{h,1}^{\rm{c}}$ being arbitrary. 
Using integration by parts and recalling that $\psolPsi{h}{{\rm{c}},(0)}{i}=\solPhi{h}{(0)}{{\rm{c}}}$ for all $1\le i\le\nnumc$, (from Lemma \ref{lem:AP0}), and $\psolPsi{h}{{\rm{u}},(1)}{i}=0$ for all $i\le N_{\rm u}^*$, (from Lemma \ref{lem:AP1}), we can rewrite \eqref{eq:APlim1b} as the following:
\begin{align}\label{eq:APlem2-1}
\sum_{i=1}^{N_{\rm c}^*}\wkc&\int_X\left(\Omega_{i}^{\rm{c}}\cdot\nabla_r\solPhi{h}{(0)}{{\rm{c}}}+\sigt\left(\psolPsi{h}{{\rm{c}},(1)}{i} - \solPhi{h}{(1)}{{\rm{c}}}\right)\right)\left(\varphi\cdot\Omega_{i}^{\rm{c}}\right)\ dr&\\
\nonumber&+\sum_{e\in\cE_h}\int_F-\Omega_{i}^{\rm{c}}\cdot n_1 \bracket{\solPhi{h}{(0)}{{\rm{c}}}} \left(\varphi\cdot\Omega_{i}^{\rm{c}}\right)^{\downarrow}\ dS(r)  = 0.
\end{align}
The term involving $\nabla_r\solPhi{h}{(0)}{{\rm{c}}}$ in \eqref{eq:APlem2-1} is 
\begin{equation}\label{eq:Thmgrad2}
%\sum_{i=1}^{N_{\rm c}^*}\wkc\int_X\left(\varphi_{\rm{c}}\right)_{j}\Omega_{i}^{\rm{c}}\cdot\nabla_r\psolPsi{h}{{\rm{c}},(0)}{i}\ dr = \int_X \bar{\test{\rm c}}\partial_k\solPhi{h}{(0)}{{\rm{c}}}\ dr\left(\sum_{i=1}^{N_{\rm c}^*}\wkc(\Omega_{i}^{\rm{c}}\cdot e_j)(\Omega_{i}^{\rm{c}}\cdot e_k)\right)=\frac{4\pi}{3}\int_X\bar{\test{\rm c}}\partial_j\solPhi{h}{(0)}{{\rm{c}}}\ dr.
%\ 
\sum_{i=1}^{N_{\rm c}^*}\wkc\int_X\nabla_r\solPhi{h}{(0)}{{\rm{c}}}\cdot\left(\Omega_{i}^{\rm{c}}\otimes\Omega_{i}^{\rm{c}}\right)\varphi\ dr = %\int_X\nabla_r\solPhi{h}{(0)}{{\rm{c}}}\cdot\left(\sum_{i=1}^{N_{\rm c}^*}\wkc\Omega_{i}^{\rm{c}}\otimes\Omega_{i}^{\rm{c}}\right)\varphi\ dr =
 \int_X\frac{4\pi}{3}\nabla_r\solPhi{h}{(0)}{{\rm{c}}}\cdot\varphi\ dr.
\end{equation}
Computing the contribution of $\solPsi{h}{\rm{c},(1)}$ in \eqref{eq:APlem2-1} we have,
\begin{equation}\label{eq:Thmtot}
\sum_{i=1}^{N_{\rm c}^*}\wkc\int_X\sigt\left(\psolPsi{h}{{\rm{c}},(1)}{i}-\solPhi{h}{(1)}{{\rm{c}}}\right)\left(\varphi\cdot\Omega_{i}^{\rm{c}}\right)\ dr%=\int_X\sigt\left(\sum_{i=1}^{N_{\rm c}^*}\wkc\psolPsi{h}{{\rm{c}},(1)}{i}\Omega_{i}^{\rm{c}}\right)\cdot\varphi\ dr 
= \int_X\sigt J_{{\rm{c}},h}^{(0)}\cdot\varphi\ dr.
\end{equation}
All jump terms on the edges are zero since $\solPhi{h}{(0)}{{\rm{c}}}\in C_{h,1}$. Combining the results from \eqref{eq:Thmgrad2} and \eqref{eq:Thmtot} gives \eqref{eq:APlemord2-0b}.
%\begin{equation}
%\int_X\left(\frac{4\pi}{3}\nabla_r \solPhi{h}{(0)}{{\rm{c}}}+\sigt J_{{\rm{c}},h}^{(0)}\right)\cdot e_j \bar{\test{\rm c}}\ dr= 0.
%\end{equation}
\end{proof}

\section{Numerical Results}\label{sec:Results}
%{\color{red} Here we will lay out the shared notation for the following three subsections.}
In this section, we compare the performance of the spatial hybrid to standard DG and FV approaches.  We also investigate the benefits of hybridization in both the angular and spatial variables.  In Section \ref{sec:NumDiffLim} we demonstrate the numerical properties of the hybrid in the diffusion limit. In the remaining subsections, we use benchmark problems to assess efficiency and accuracy.  

All numerical simulations are performed on a reduced spatial geometry that assumes no variations in the $z$ direction.   Discrete ordinates based on a tensor product quadrature \cite{Atk}
%\cdh{add citation} 
with $N^* = N^2$ are used to discretize in angle.  In all cases, the domain is Cartesian, the mesh is square, and the DG elements are $Q_1$.  The finite volume discretization uses a second-order reconstruction with slopes computed using only upwind information (see Appendix \ref{App2}).  For time-dependent problems, a second-order strongly $S$-stable DIRK scheme with $\alpha=1-1/\sqrt{2}$ \cite[Thm. 5]{DIRK} is used. 
%Additionally, for the following examples it is easier to represent the (non-dimensional) total cross-section as $\sigt=\siga+\sigs$, where $\sigs$ is the (non-dimensional) scattering cross-section when $\epsilon=1$.

All results in this section are expressed in terms of the particle concentration $\phi$, given by 
\begin{equation}
\phi(t,r) =\int_{\sph}\psi(t,r,\Omega)\ d\Omega.
\end{equation}
Given a reference solution $\Phi^{\rm{ref}}$ for $\phi$ and a numerical solution $\Phi$, errors are calculated in two relative norms:
\begin{equation}\label{eq:err}
E_2(t) = \frac{||\Phi^{\rm{ref}}- \Phi||_{L^2(X)}}{||\Phi^{\rm{ref}}||_{L^2(X)}},
\quad 
E_{\infty}(t) = \frac{||\Phi^{\rm{ref}}- \Phi||_{L^{\infty}(X)}}{||\Phi^{\rm{ref}}||_{L^{\infty}(X)}}.
 \end{equation}

All simulations were run on a machine with dual E5-2699 v4 CPUs, each with 22 physical cores (44 logical) running at 2.20 GHz. The machine has 512 GB of DDR4 memory running at 2133 MHz. The simulations in Section \ref{sec:NumDiffLim} were run with code implementing OpenMP parallelization with 16 threads. All remaining simulations were run in serial.

\subsection{Diffusion Limit Test}\label{sec:NumDiffLim}

We solve a steady-state version of \eqref{eq:RTE} in $x$-$y$ geometry with zero boundary condition on $\Gamma^{-}$, $\sigt=4.0$, $\siga=0.25$, and 
$q(x,y,\Omega) = 900x^2y^2(1-x)^2(1-y)^2m_{1,1}^2(\Omega)$, where
\begin{equation}
m_{1,1}(\Omega)=\sqrt{\frac{3}{4\pi}}\Omega_x.
\end{equation}
%\cdh{Test (i) DGS8-DGS4, (ii) FV-S8 (iii) FVS8 - DGS8 (iv)  FVS8 - DGS4}

%We perform a numerical test for the following scaled equation:
%\begin{alignat}{3}\label{eq:SSscaled2}
%\cL_{\epsilon}\psi_{\epsilon}(x,y,\Omega)&=\cS_{\epsilon}\psi_{\epsilon}+\epsilon q\quad&&(x,y,\Omega)\in\Gamma\\
%\psi_{\epsilon}(x,y,\Omega)&=0\quad&&(x,y)\in\Gamma^-,
%\end{alignat}
%where $\sigt=4.0$, $\siga=0.25$, and $q$ is an $O(1)$ known source: 
%\begin{equation}
%    q(x,y,\Omega) = 900x^2y^2(1-x)^2(1-y)^2m_{1,1}^2(\Omega),\quad m_{1,1}(\Omega)=\sqrt{\frac{3}{4\pi}}\Omega_x.
%\end{equation}
%where 
%\begin{equation}
%    f(x,y)=
%\end{equation}
%and $m_{1,1}(\Omega)$ is the real-valued spherical harmonic of degree 1 and order 1:
%\begin{equation}
%    
%\end{equation}
Using an $\nsol{S}_8$ angular discretization, we compare numerical results using standard DG, FV, and a DG-FV hybrid. We examine errors and order of convergence with respect to the spatial mesh $h$ as $\epsilon$ varies, using a DG spatial discretization with  $h = 1/256$ as a numerical reference.  Results are shown in Tables \ref{Table4-1}--\ref{Table4-3}. The DG-DG scheme maintains second-order convergence in $h$ for large and small $\epsilon$, although it loses order for intermediate values of $\epsilon$.  Reductions in order of this type are common in multiscale problems \cite{jin}.  The finite volume method performs well for large values of $\epsilon$, but the convergence is lost as $\epsilon$ gets smaller.  As expected, the new FV-DG hybrid performs similarly to the DG-DG method, with a similar drop in convergence order for intermiate values of $\epsilon$.  However, errors for the FV-DG hybrid are $2
$--$3$ times larger than the DG-DG scheme for larger $\epsilon$.

\begin{table}[tbhp]
	\centering
	\scriptsize
	%\scalebox{.78}
	{
		%\hspace{-0.35in}
		\begin{tabular}{|c|c|c|c|c|c|c|c|c|c|c|}
			%\toprule 
			\cline{2-11}
			\multicolumn{1}{c|}{} &\multicolumn{2}{c|}{$h=1/8$}&\multicolumn{2}{c|}{$h=1/16$}&\multicolumn{2}{c|}{$h=1/32$}&\multicolumn{2}{c|}{$h=1/64$}&\multicolumn{2}{c|}{$h=1/128$} \\
			\hline	
			$\epsilon$ &	$E_2$&	Ord.&	$E_2$&	Ord.&	$E_2$&	Ord.&	$E_2$&	Ord.&	$E_2$&	Ord.\\ \hline
			1 &	8.55E-3 &	- &	2.24E-3 &	1.93 &	5.90E-4 &	1.92 &	1.52E-4 &	1.95 &	3.71E-5 &	2.04 \\ \hline
			$2^{-1}$ &	8.05E-3 &	- &	2.18E-3 &	1.88 &	6.34E-4 &	1.78 &	1.78E-4 &	1.83 &	4.51E-5 &	1.98 \\ \hline
			$2^{-5}$ &	1.22E-2 &	- &	2.79E-3 &	2.12 &	6.63E-4 &	2.07 &	1.90E-4 &	1.80 &	6.57E-5 &	1.54 \\ \hline
			$2^{-9}$ & 1.40E-2 &	- &	3.45E-3 &	2.02 &	8.41E-4 &	2.04 &	1.99E-4 &	2.08 &	4.28E-5 &	2.22 \\ \hline
			$2^-{13}$ &	1.42E-2 &	- &	3.52E-3 &	2.01 &	8.72E-4 &	2.01 &	2.11E-4 &	2.05 &	4.62E-5 &	2.19 \\ \hline
			%15 &	2.54E-01 &	- &	1.16E-01 &	1.1280 &	6.39E-02 &	0.8652 &	3.72E-02 &	0.7805 &	2.39E-02 &	0.6383 &	1.81E-02 &	0.4010 \\ \hline
			%17 &	2.55E-01 &	- &	1.17E-01 &	1.1278 &	6.35E-02 &	0.8767 &	3.66E-02 &	0.7949 &	2.31E-02 &	0.6640 &	1.66E-02 &	0.4767 \\ \hline
	\end{tabular}}
	\caption{Errors $E_2$ (see \eqref{eq:err}) and convergence with respect to $h$ for DG $\nsol{S}_8$}\label{Table4-1}
\end{table}

%\begin{table}[tbhp]
%	\centering
%	\footnotesize
%	%\scalebox{.78}
%	{
%		%\hspace{-0.35in}
%		\begin{tabular}{|c?{1mm}c|c|c|c|c|c|c|c|c|c|c|c|}
%			%\toprule 
%			\cline{2-13}
%			\multicolumn{1}{c?{1mm}}{} &	\multicolumn{2}{c|}{$h=1/4$}&\multicolumn{2}{c|}{$h=1/8$}&\multicolumn{2}{c|}{$h=1/16$}&\multicolumn{2}{c|}{$h=1/32$}&\multicolumn{2}{c|}{$h=1/64$}&\multicolumn{2}{c|}{$h=1/128$} \\
%			\hline	
%			$\epsilon$ &	$E_2$&	Order&	$E_2$&	Order&	$E_2$&	Order&	$E_2$&	Order&	$E_2$&	Order&	$E_2$&	Order\\ \hline
%			1 &	3.48E-2 &	- &	8.55E-3 &	2.024 &	2.24E-3 &	1.933 &	5.90E-4 &	1.924 &	1.52E-4 &	1.952 &	3.71E-5 &	2.040 \\ \hline
%			$2^{-1}$ &	3.42E-2 &	- &	8.05E-3 &	2.085 &	2.18E-3 &	1.882 &	6.34E-4 &	1.784 &	1.78E-4 &	1.833 &	4.51E-5 &	1.981 \\ \hline
%			$2^{-5}$ &	5.28E-2 &	- &	1.22E-2 &	2.119 &	2.79E-3 &	2.123 &	6.63E-4 &	2.074 &	1.90E-4 &	1.801 &	6.57E-5 &	1.535 \\ \hline
%			$2^{-9}$ &	5.75E-2 &	- &	1.40E-2 &	2.037 &	3.45E-3 &	2.021 &	8.41E-4 &	2.036 &	1.99E-4 &	2.080 &	4.28E-5 &	2.218 \\ \hline
%			$2^-{13}$ &	5.78E-2 &	- &	1.42E-2 &	2.030 &	3.52E-3 &	2.008 &	8.72E-4 &	2.013 &	2.11E-4 &	2.047 &	4.62E-5 &	2.191 \\ \hline
%			%15 &	2.54E-01 &	- &	1.16E-01 &	1.1280 &	6.39E-02 &	0.8652 &	3.72E-02 &	0.7805 &	2.39E-02 &	0.6383 &	1.81E-02 &	0.4010 \\ \hline
%			%17 &	2.55E-01 &	- &	1.17E-01 &	1.1278 &	6.35E-02 &	0.8767 &	3.66E-02 &	0.7949 &	2.31E-02 &	0.6640 &	1.66E-02 &	0.4767 \\ \hline
%	\end{tabular}}
%	\caption{DGDG $\nsol{S}_8$ $\nsol{S}_8$}\label{Table4-1}
%\end{table}

\begin{table}[tbhp]
	\centering
	\scriptsize
	%\scalebox{.78}
	{
		%\hspace{-0.35in}
		\begin{tabular}{|c|c|c|c|c|c|c|c|c|c|c|}
			%\toprule 
			\cline{2-11}
			\multicolumn{1}{c|}{} &\multicolumn{2}{c|}{$h=1/8$}&\multicolumn{2}{c|}{$h=1/16$}&\multicolumn{2}{c|}{$h=1/32$}&\multicolumn{2}{c|}{$h=1/64$}&\multicolumn{2}{c|}{$h=1/128$} \\
			\hline	
			$\epsilon$ &	$E_2$&	Ord.&	$E_2$&	Ord.&	$E_2$&	Ord.&	$E_2$&	Ord.&	$E_2$&	Ord.\\ \hline
			1 &	6.84E-2 &	- &	1.69E-2 &	2.02 &	4.35E-3 &	1.96 &	1.10E-3 &	1.98 &	2.79E-4 &	1.98 \\ \hline
			$2^{-1}$ &	9.18E-2 &	- &	2.17E-2 &	2.08 &	5.29E-3 &	2.03 &	1.31E-3 &	2.01 &	3.34E-4 &	1.98 \\ \hline
			$2^{-5}$ &	3.98E-1 &	- &	9.53E-2 &	2.06 &	1.67E-2 &	2.51 &	2.79E-3 &	2.59 &	5.14E-4 &	2.44 \\ \hline
			$2^{-9}$ &	9.03E-1 &	- &	5.67E-1 &	0.67 &	1.57E-1 &	1.86 &	1.40E-1 &	0.16 &	1.93E-1 &	-0.46 \\ \hline
			$2^{-13}$ &	9.93E-1 &	- &	9.54E-1 &	0.06 &	8.39E-1 &	0.19 &	6.40E-1 &	0.39 &	8.84E-1 &	-0.47 \\ \hline
			%15 &	1.00E+00 &	- &	1.00E+00 &	0.0001 &	9.99E-01 &	0.0013 &	9.92E-01 &	0.0096 &	9.44E-01 &	0.0724 &	7.78E-01 &	0.2783 \\ \hline
			%17 &	1.00E+00 &	- &	1.00E+00 &	0.0000 &	1.00E+00 &	0.0004 &	9.98E-01 &	0.0023 &	9.85E-01 &	0.0186 &	9.44E-01 &	0.0615 \\ \hline
	\end{tabular}}
	\caption{Errors $E_2$ (see \eqref{eq:err}) and convergence with respect to $h$ for FV $\nsol{S}_8$}\label{Table3}
	%\caption{FV $\nsol{S}_8$}
\end{table}
%\begin{table}[tbhp]
%	\centering
%	\footnotesize
%	%\scalebox{.78}
%	{
%		%\hspace{-0.35in}
%		\begin{tabular}{|c?{1mm}c|c|c|c|c|c|c|c|c|c|c|c|}
%			%\toprule 
%			\cline{2-13}
%			\multicolumn{1}{c?{1mm}}{} &	\multicolumn{2}{c|}{$h=1/4$}&\multicolumn{2}{c|}{$h=1/8$}&\multicolumn{2}{c|}{$h=1/16$}&\multicolumn{2}{c|}{$h=1/32$}&\multicolumn{2}{c|}{$h=1/64$}&\multicolumn{2}{c|}{$h=1/128$} \\
%			\hline	
%			$\epsilon$ &	$E_2$&	Order&	$E_2$&	Order&	$E_2$&	Order&	$E_2$&	Order&	$E_2$&	Order&	$E_2$&	Order\\ \hline
%			1 &	2.13E-1 &	- &	6.84E-2 &	1.637 &	1.69E-2 &	2.017 &	4.35E-3 &	1.959 &	1.10E-3 &	1.983 &	2.79E-4 &	1.977 \\ \hline
%			$2^{-1}$ &	2.92E-1 &	- &	9.18E-2 &	1.670 &	2.17E-2 &	2.083 &	5.29E-3 &	2.034 &	1.31E-3 &	2.009 &	3.34E-4 &	1.975 \\ \hline
%			$2^{-5}$ &	8.00E-1 &	- &	3.98E-1 &	1.007 &	9.53E-2 &	2.062 &	1.67E-2 &	2.510 &	2.79E-3 &	2.587 &	5.14E-4 &	2.440 \\ \hline
%			$2^{-9}$ &	9.84E-1 &	- &	9.03E-1 &	0.124 &	5.67E-1 &	0.670 &	1.57E-1 &	1.858 &	1.40E-1 &	0.159 &	1.93E-1 &	-0.460 \\ \hline
%			$2^{-13}$ &	9.99E-1 &	- &	9.93E-1 &	0.008 &	9.54E-1 &	0.059 &	8.39E-1 &	0.185 &	6.40E-1 &	0.390 &	8.84E-1 &	-0.465 \\ \hline
%			%15 &	1.00E+00 &	- &	1.00E+00 &	0.0001 &	9.99E-01 &	0.0013 &	9.92E-01 &	0.0096 &	9.44E-01 &	0.0724 &	7.78E-01 &	0.2783 \\ \hline
%			%17 &	1.00E+00 &	- &	1.00E+00 &	0.0000 &	1.00E+00 &	0.0004 &	9.98E-01 &	0.0023 &	9.85E-01 &	0.0186 &	9.44E-01 &	0.0615 \\ \hline
%	\end{tabular}}
%	\caption{FV $\nsol{S}_8$}\label{Table3}
%	%\caption{FV $\nsol{S}_8$}
%\end{table}

\begin{table}[tbhp]
\centering
\scriptsize
%\scalebox{.78}
%\hspace{-0.35in}
\begin{tabular}{|c|c|c|c|c|c|c|c|c|c|c|}
%\toprule 
\cline{2-11}
\multicolumn{1}{c|}{} &\multicolumn{2}{c|}{$h=1/8$}&\multicolumn{2}{c|}{$h=1/16$}&\multicolumn{2}{c|}{$h=1/32$}&\multicolumn{2}{c|}{$h=1/64$}&\multicolumn{2}{c|}{$h=1/128$} \\
\hline	
$\epsilon$ &	$E_2$&	Ord.&	$E_2$&	Ord.&	$E_2$&	Ord.&	$E_2$&	Ord.&	$E_2$&	Ord.\\ \hline
1 &	3.65E-2 &	- &	1.01E-2 &	1.86 &	2.54E-3 &	1.98 &	6.35E-4 &	2.00 &	1.58E-4 &	2.00 \\ \hline
$2^{-1}$ &	2.65E-2 &	- &	6.53E-3 &	2.02 &	1.56E-3 &	2.07 &	3.81E-4 &	2.03 &	9.36E-5 &	2.02 \\ \hline
$2^{-5}$ &	1.59E-2 &	- &	3.36E-3 &	2.24 &	7.36E-4 &	2.19 &	1.97E-4 &	1.90 &	6.62E-5 &	1.58 \\ \hline
$2^{-9}$ &	1.71E-2 &	- &	3.96E-3 &	2.11 &	9.19E-4 &	2.11 &	2.09E-4 &	2.14 &	4.39E-5 &	2.25 \\ \hline
$2^{-13}$ &	1.72E-2 &	- &	4.03E-3 &	2.10 &	9.51E-4 &	2.08 &	2.22E-4 &	2.10 &	4.75E-5 &	2.22 \\ \hline
%15 &	2.85E-01 &	- &	1.23E-01 &	1.2105 &	6.20E-02 &	0.9907 &	3.66E-02 &	0.7604 &	2.37E-02 &	0.6270 &	1.79E-02 &	0.4049 \\ \hline
%17 &	2.87E-01 &	- &	1.24E-01 &	1.2067 &	6.09E-02 &	1.0293 &	3.51E-02 &	0.7950 &	2.20E-02 &	0.6740 &	1.57E-02 &	0.4867 \\ \hline
\end{tabular}
\caption{Errors $E_2$ (see \eqref{eq:err}) and convergence with respect to $h$ for FV-DG $\nsol{S}_8$ $\nsol{S}_8$}\label{Table4-3}
%\caption{FV-DG $\nsol{S}_8$ $\nsol{S}_8$}
\end{table}

\subsection{Line source benchmark}
The line source is a benchmark problem that was first formulated in \cite{Ganapol} as a means to verify numerical methods and assess any strengths or weaknesses. The problem describes an initial pulse of particles distributed isotropically along an infinite line in space moving through a purely scattering material medium as time evolves. In the reduced two-dimensional geometry, the initial pulse is expressed as a delta function at the origin of the two-dimensional domain.

\subsubsection{Example 1}\label{sec:Eff}

In this example, we demonstrate how hybridization can be used to compute qualitatively similar solution with less computational resources.  We simulate \eqref{eq:RTE} with $\epsilon=1$ and approximate the initial condition using a Gaussian distribution with small standard deviation $\beta$:
\begin{equation}\label{eq:LSinicond}
\psi_0(r,\Omega)=\frac{1}{8\beta^2\pi^2}e^{\frac{-|r|^2}{2\beta^2}},\quad \beta = 0.09 .
\end{equation}
We consider problem with an absorption cross-section $\sigma_{\rm{a}}=0$, total cross-section $\sigma_{\rm{t}}=1$, source $q=0$, and boundary condition $\psi_{\rm{b}}=0$. For reference, a semi-analytic solution is computed using the algorithm described in \cite{Garrett};  see Figure \ref{isofigure0}.   We simulate the problem using a $301\times301$ grid on domain $[-1.5,1.5]\times[-1.5,1.5]$. The time step is $\Delta t=5\Delta x$ and the final time is $t=1$.   Several different orders of angular discretization are considered.

 The results in Figure \ref{isofigure1} show that the numerical solution changes dramatically based on the number  discrete ordinates used.  However, the choice of spatial discretization makes little difference in the qualitative solution.  What does vary is the computational time and memory usage.  These quantities, both real and predicted, are depicted in Figure \ref{Bar2}.  A detailed description of how the predicted values were computed is given in Appendix \ref{App1}. 
% with the goal being to mitigate the ray-effects that still show up in all simulations using less than a certain number of discrete ordinates. Very similar graphs are presented to show that the solutions are not very far from each other, 
% and then bar graphs show the efficiency of the spatial hybrid in terms of computational speed and memory usage. 

%
%
%
\begin{figure}[tbhp]
\centering
\makebox[\linewidth][c]{%
\captionsetup[subfigure]{justification=centering}
\centering
\begin{subfigure}[b]{0.4\textwidth}
\centering
\includegraphics[width=.85\textwidth]{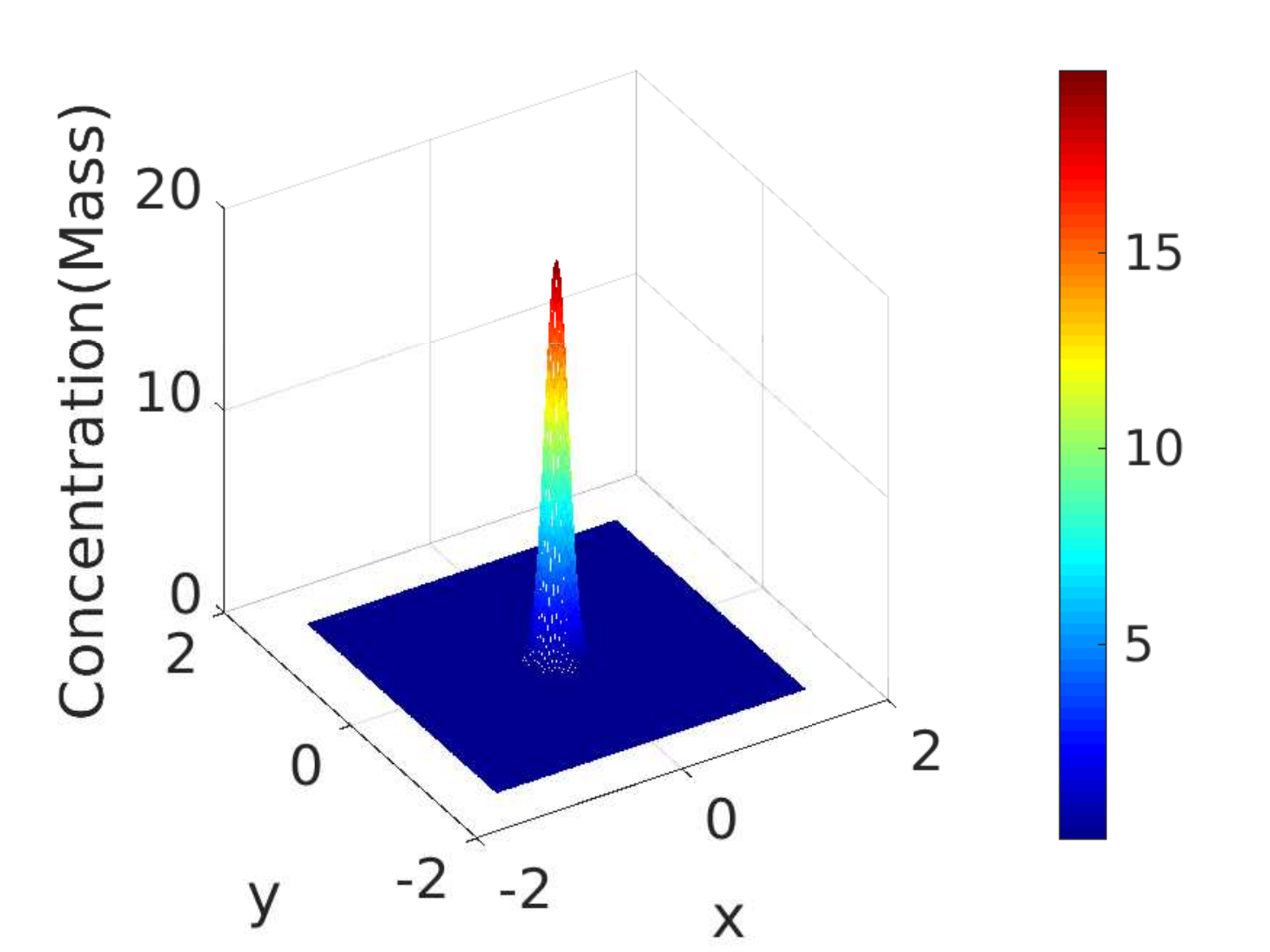}
\caption{Initial Condition}
\end{subfigure}%
\begin{subfigure}[b]{0.4\textwidth}
\centering
\includegraphics[width=.85\textwidth]{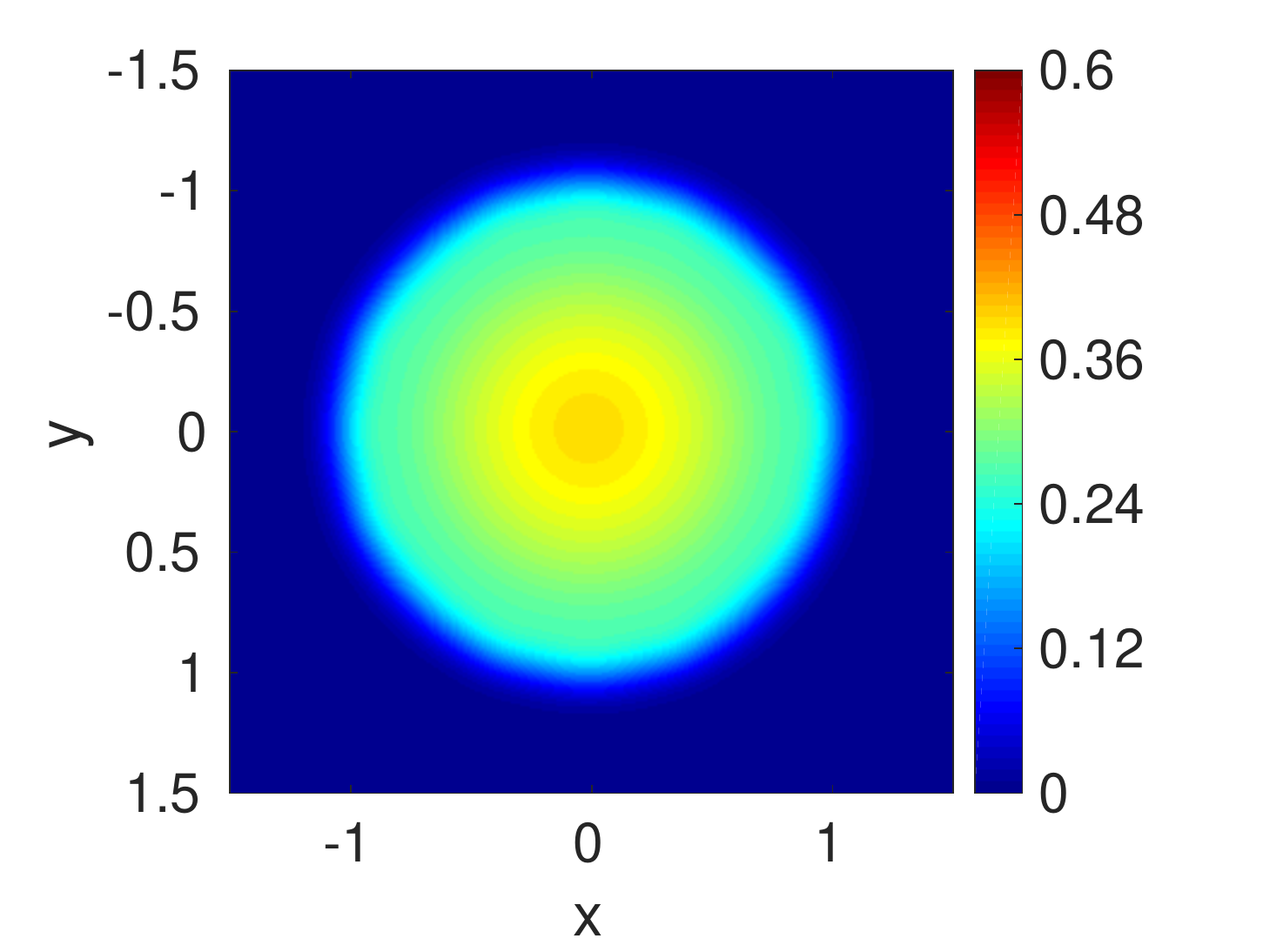}
\caption{Semi-Analytic, $t=1$}
\end{subfigure}%
}\\
\caption{Semi analytic solution for the line source benchmark using initial condition \eqref{eq:LSinicond} with $\beta=0.09$ at time $t=1$.}
\label{isofigure0}
\end{figure}

\begin{figure}[tbhp]
\centering
\makebox[\linewidth][c]{%
\captionsetup[subfigure]{justification=centering}
\centering
\begin{subfigure}[b]{0.25\textwidth}
\centering
\includegraphics[width=1\textwidth]{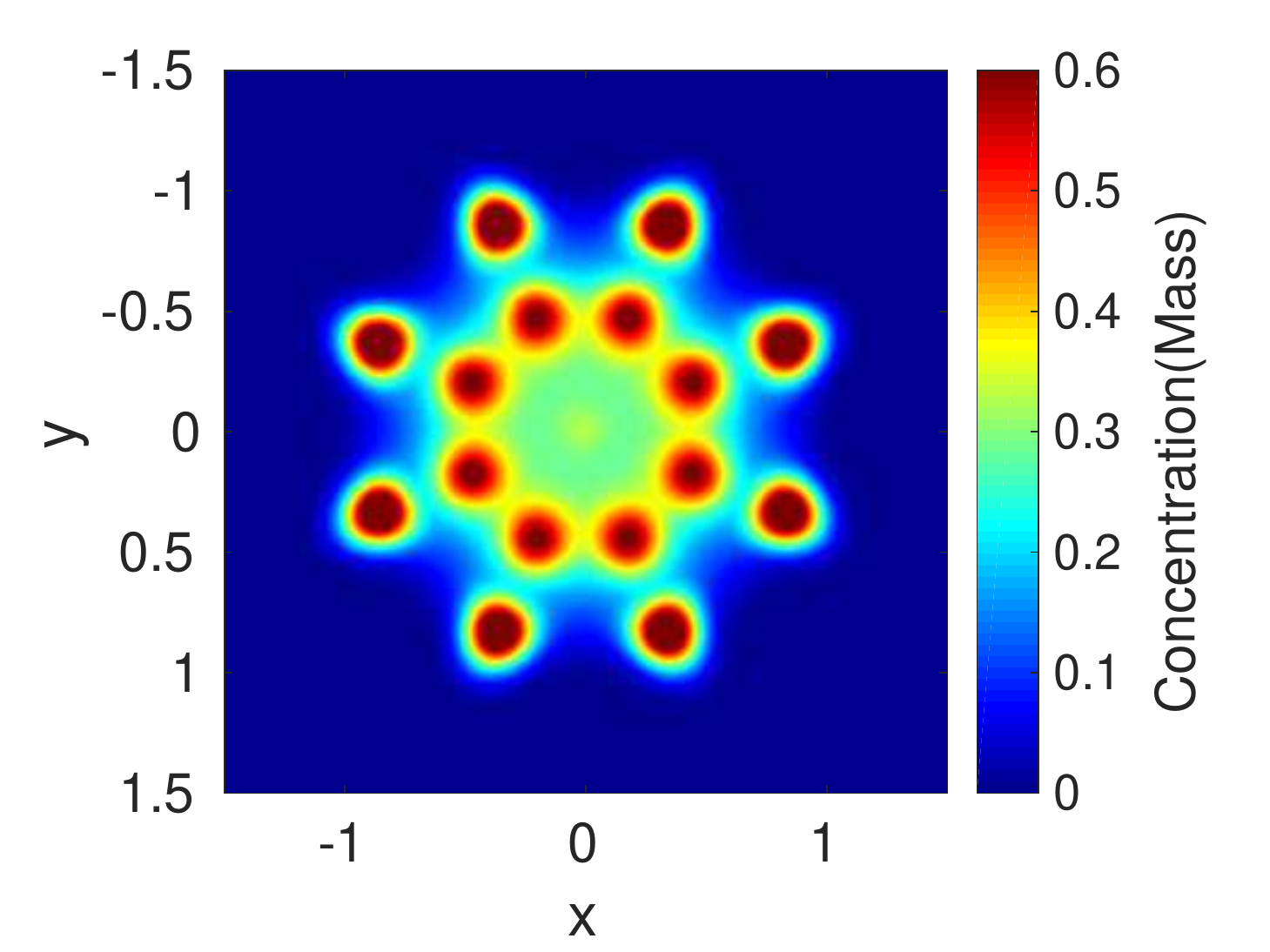}
\caption{FV $\nsol{S}_{4}$,}
\end{subfigure}%
\begin{subfigure}[b]{0.25\textwidth}
\centering
\includegraphics[width=1\textwidth]{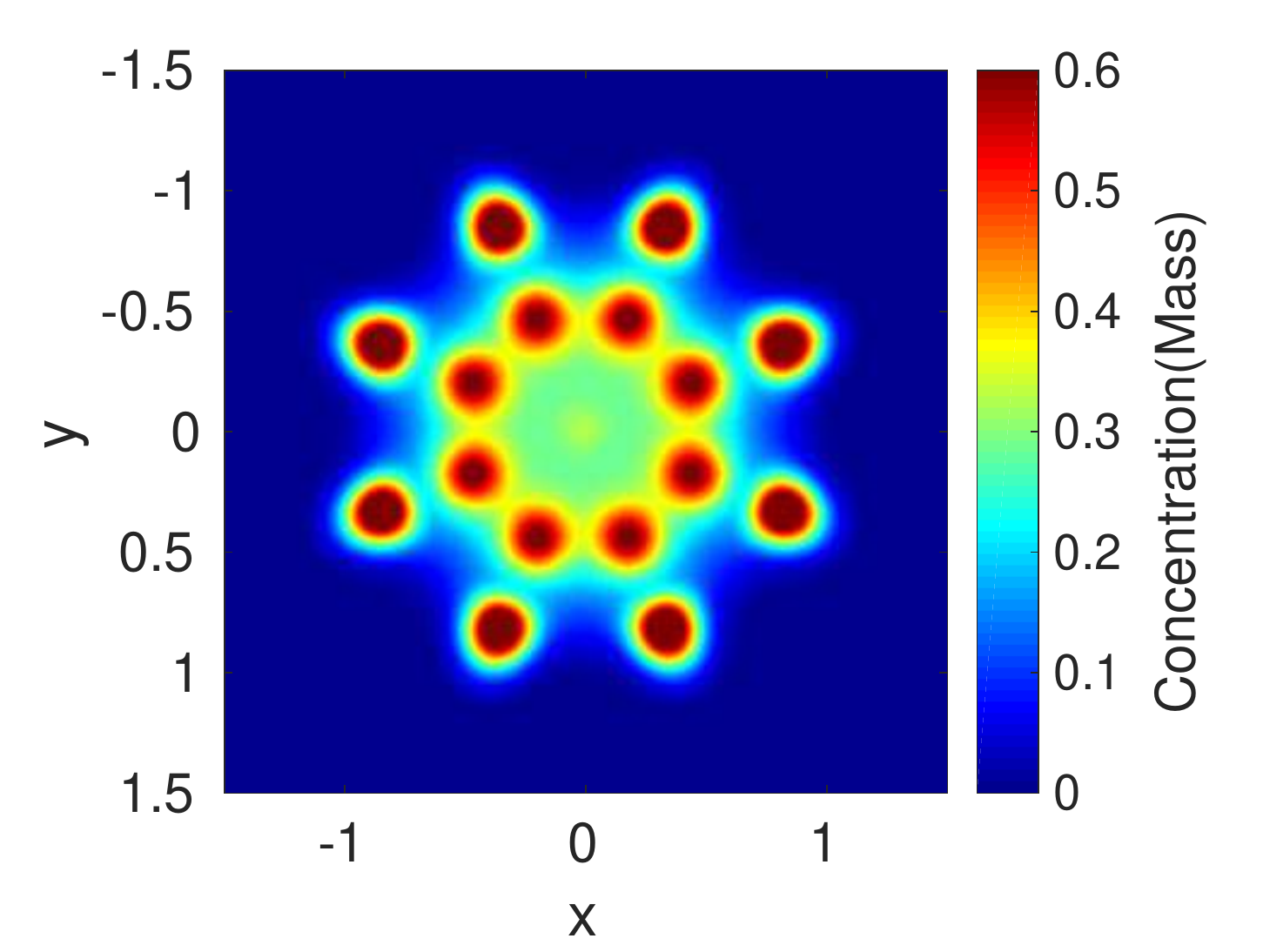}
\caption{DG $\nsol{S}_{4}$.}
\end{subfigure}%
\begin{subfigure}[b]{0.25\textwidth}
\centering
\includegraphics[width=1\textwidth]{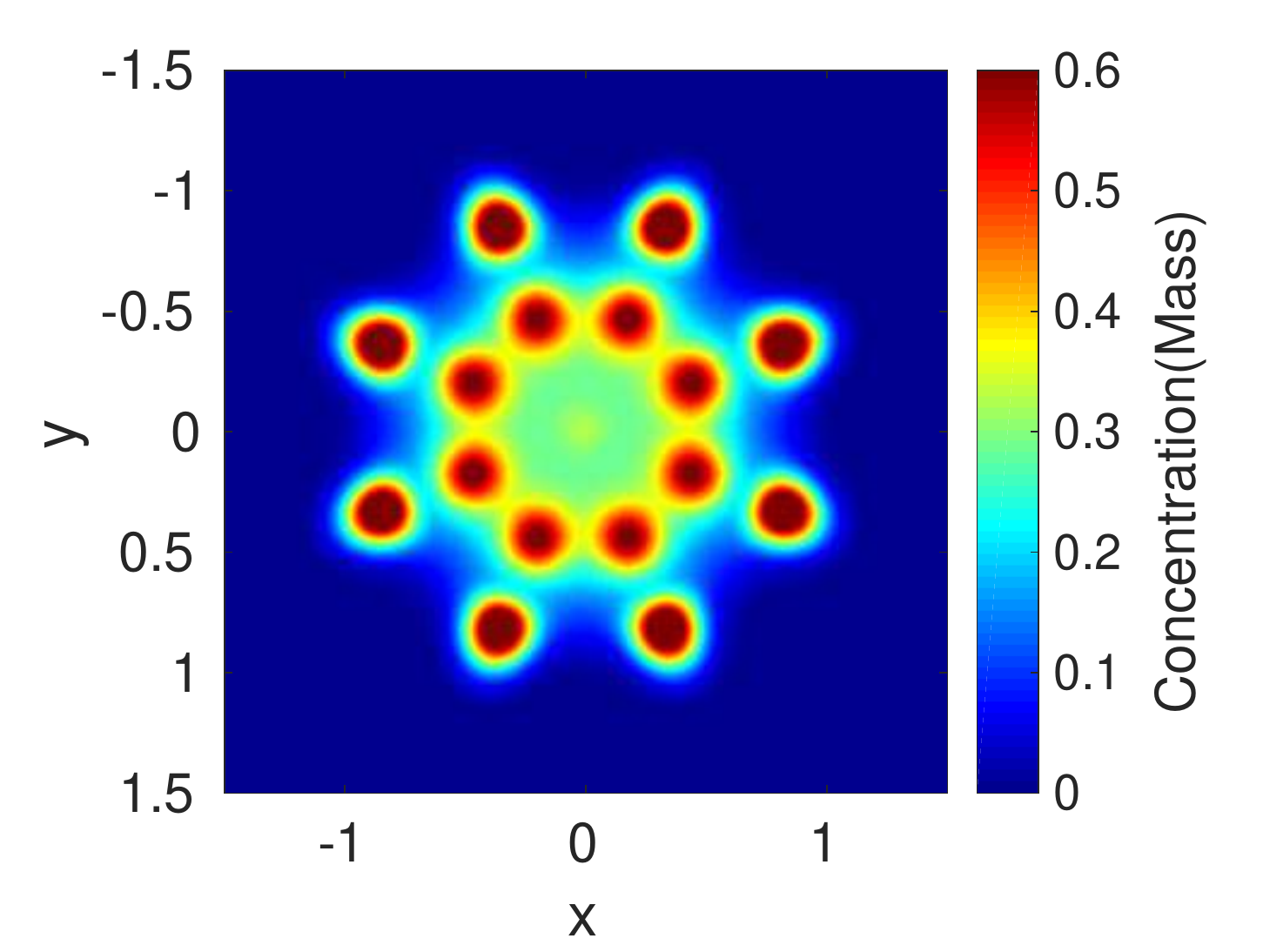}
\caption{DG-DG $\nsol{S}_{4}\nsol{S}_4$}
\end{subfigure}%
\begin{subfigure}[b]{0.25\textwidth}
\centering
\includegraphics[width=1\textwidth]{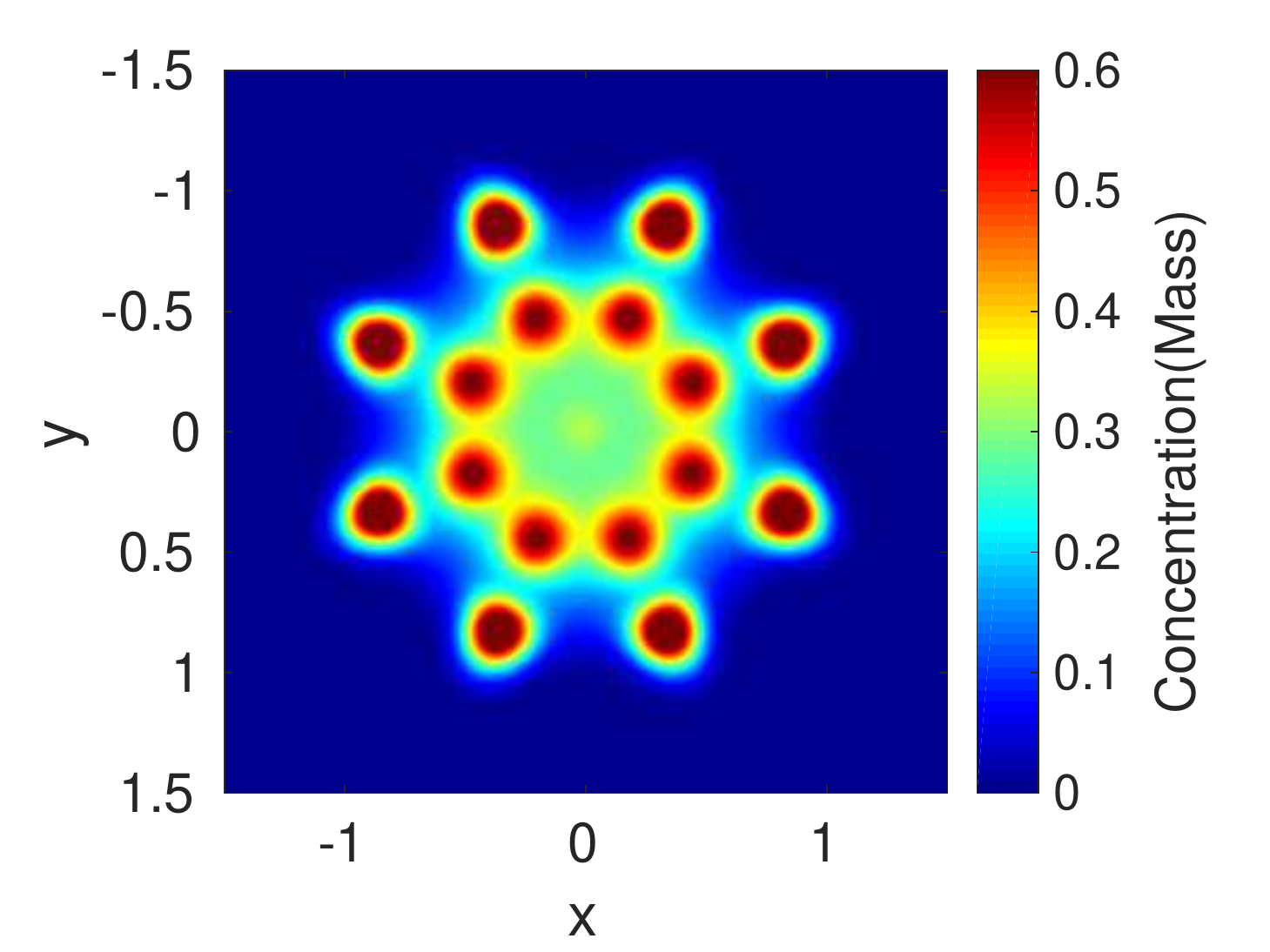}
\caption{FV-DG $\nsol{S}_{4}\nsol{S}_4$}
\end{subfigure}%
}\\
\makebox[\linewidth][c]{%
\captionsetup[subfigure]{justification=centering}
\centering
\begin{subfigure}[b]{0.25\textwidth}
\centering
\includegraphics[width=1\textwidth]{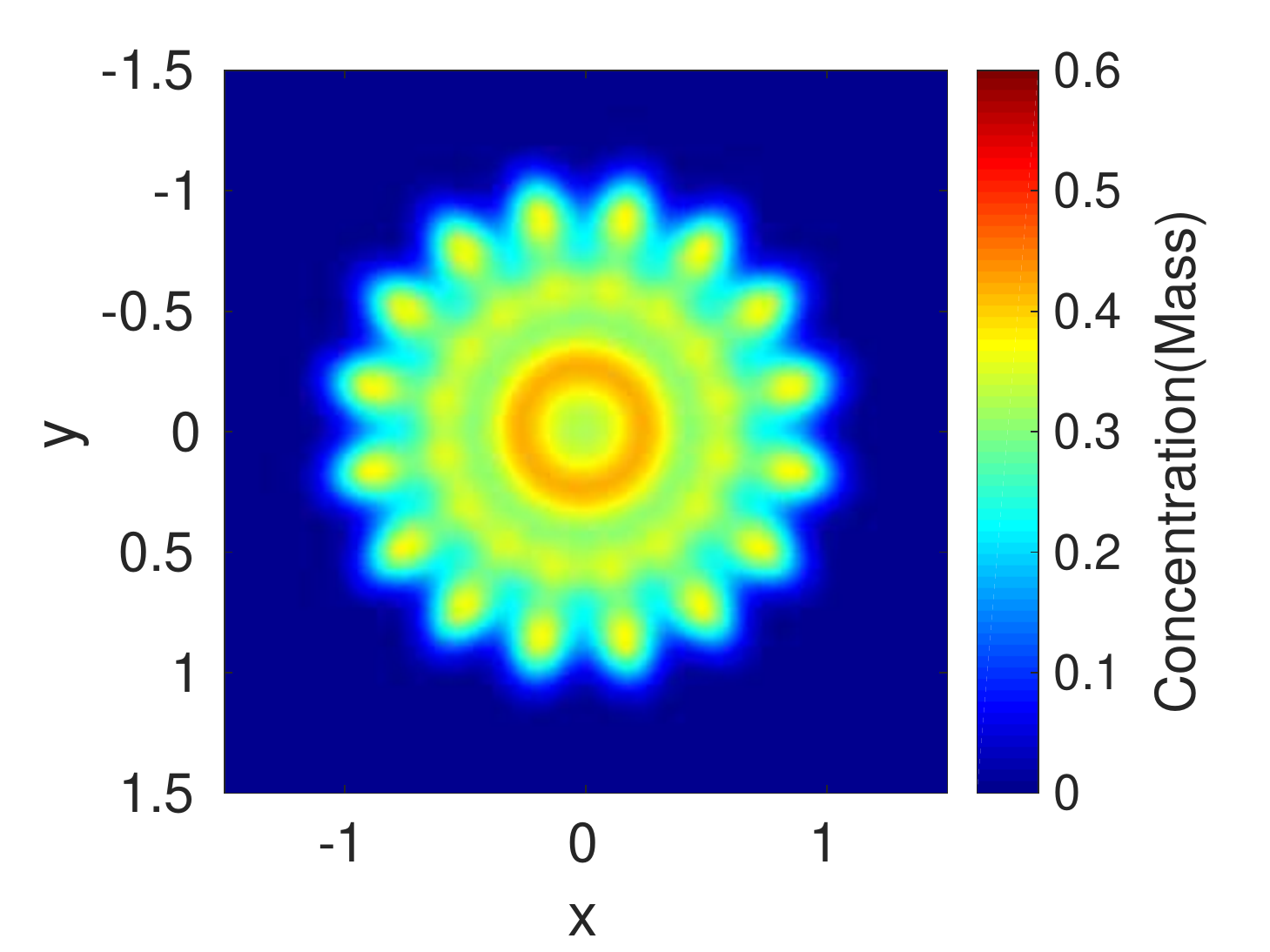}
\caption{FV $\nsol{S}_{8}$}
\end{subfigure}%
\begin{subfigure}[b]{0.25\textwidth}
\centering
\includegraphics[width=1\textwidth]{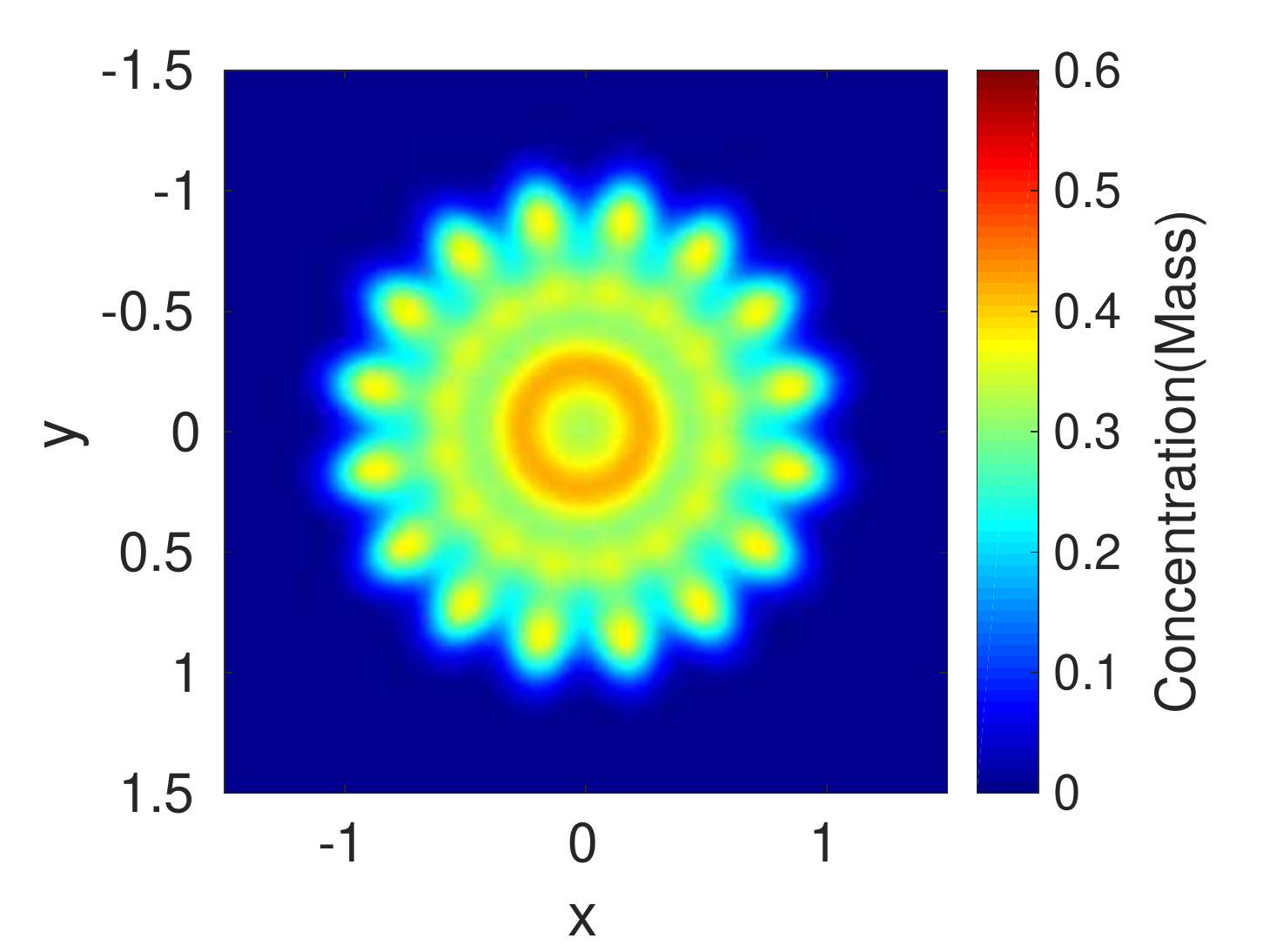}
\caption{DG $\nsol{S}_{8}$}
\end{subfigure}%
\begin{subfigure}[b]{0.25\textwidth}
\centering
\includegraphics[width=1\textwidth]{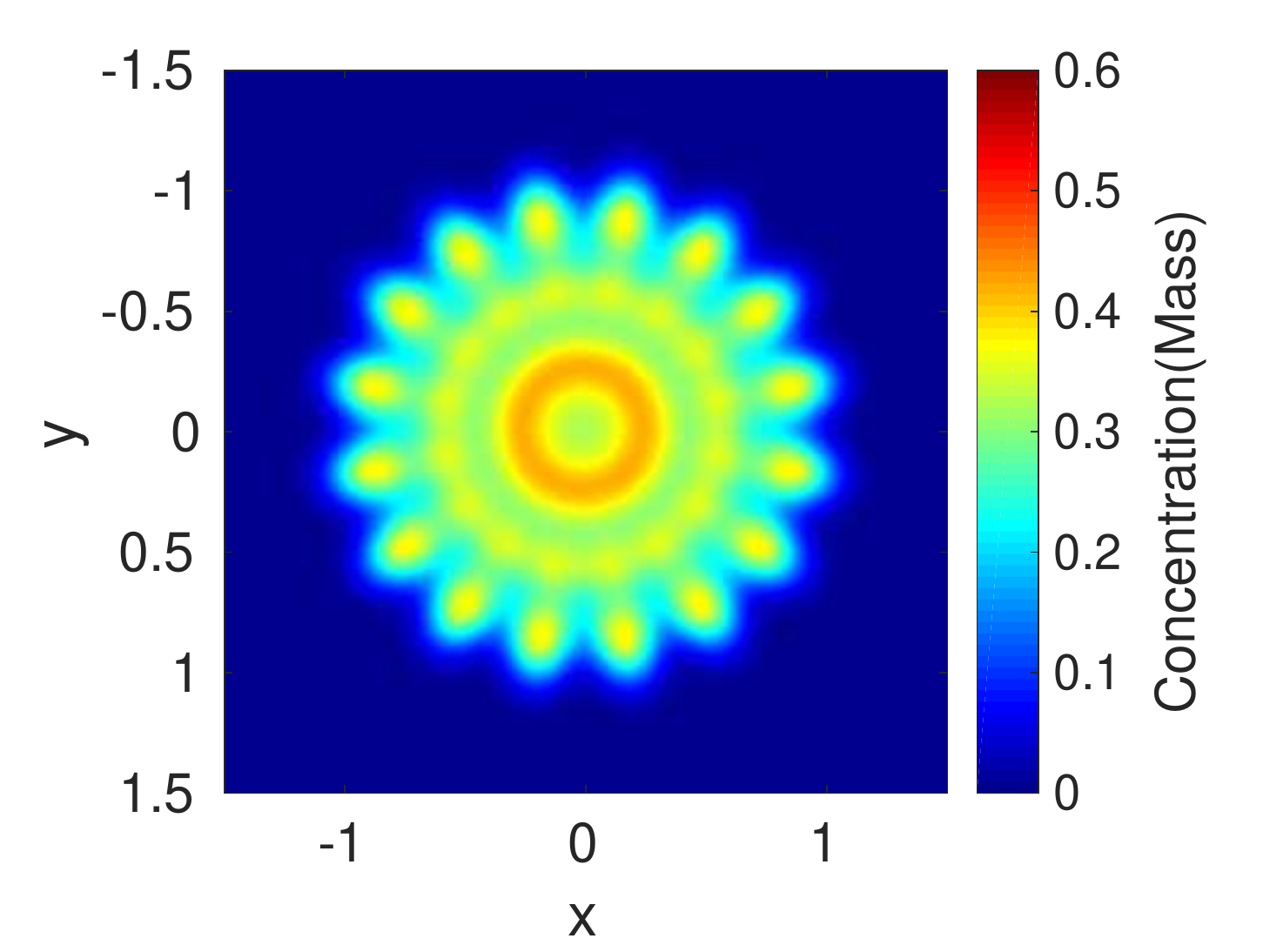}
\caption{DG-DG $\nsol{S}_{8}\nsol{S}_4$}
\end{subfigure}%
\begin{subfigure}[b]{0.25\textwidth}
\centering
\includegraphics[width=1\textwidth]{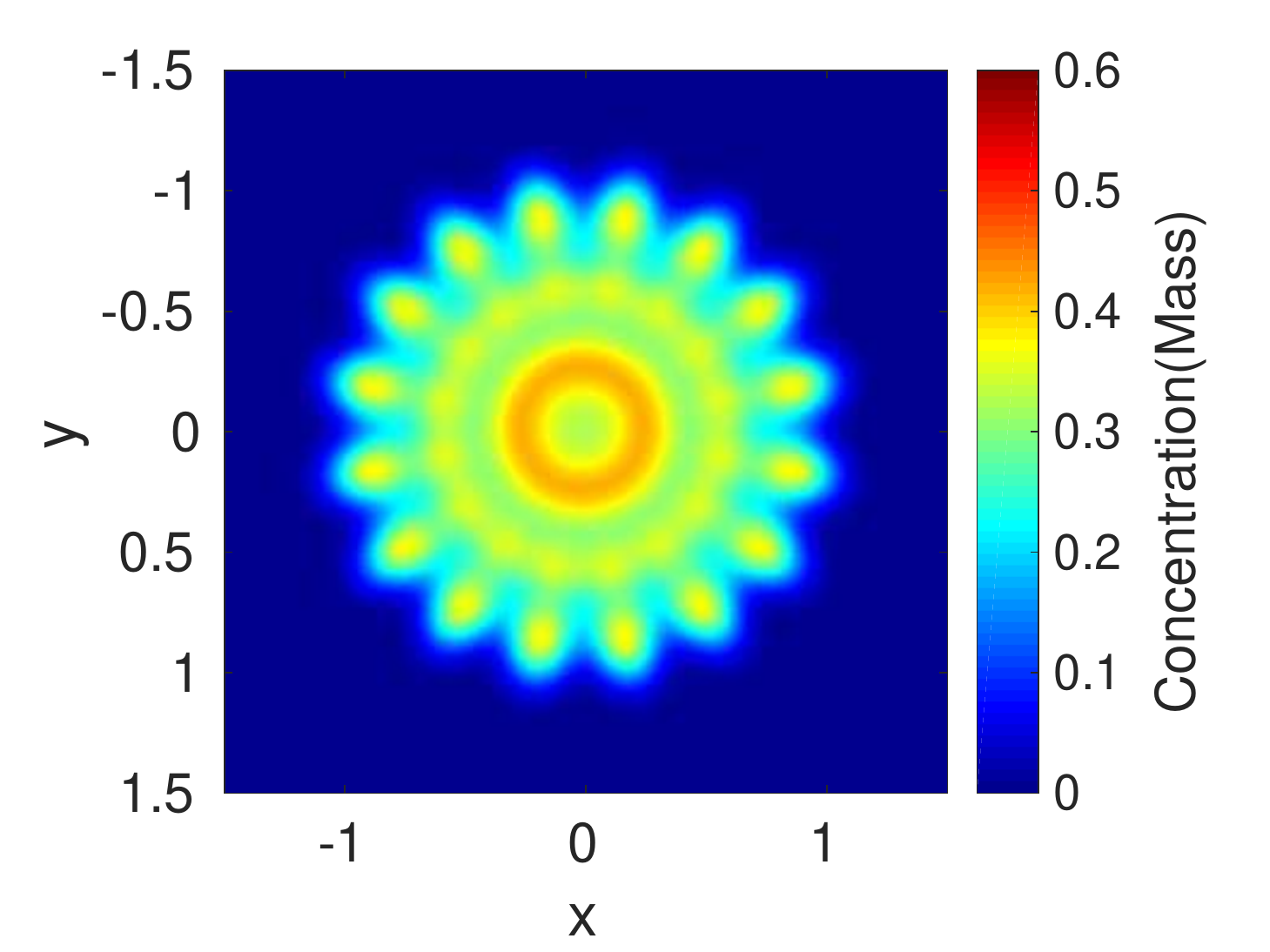}
\caption{FV-DG $\nsol{S}_{8}\nsol{S}_4$, }
\end{subfigure}%
}\\
\makebox[\linewidth][c]{%
\captionsetup[subfigure]{justification=centering}
\centering
\begin{subfigure}[b]{0.25\textwidth}
\centering
\includegraphics[width=1\textwidth]{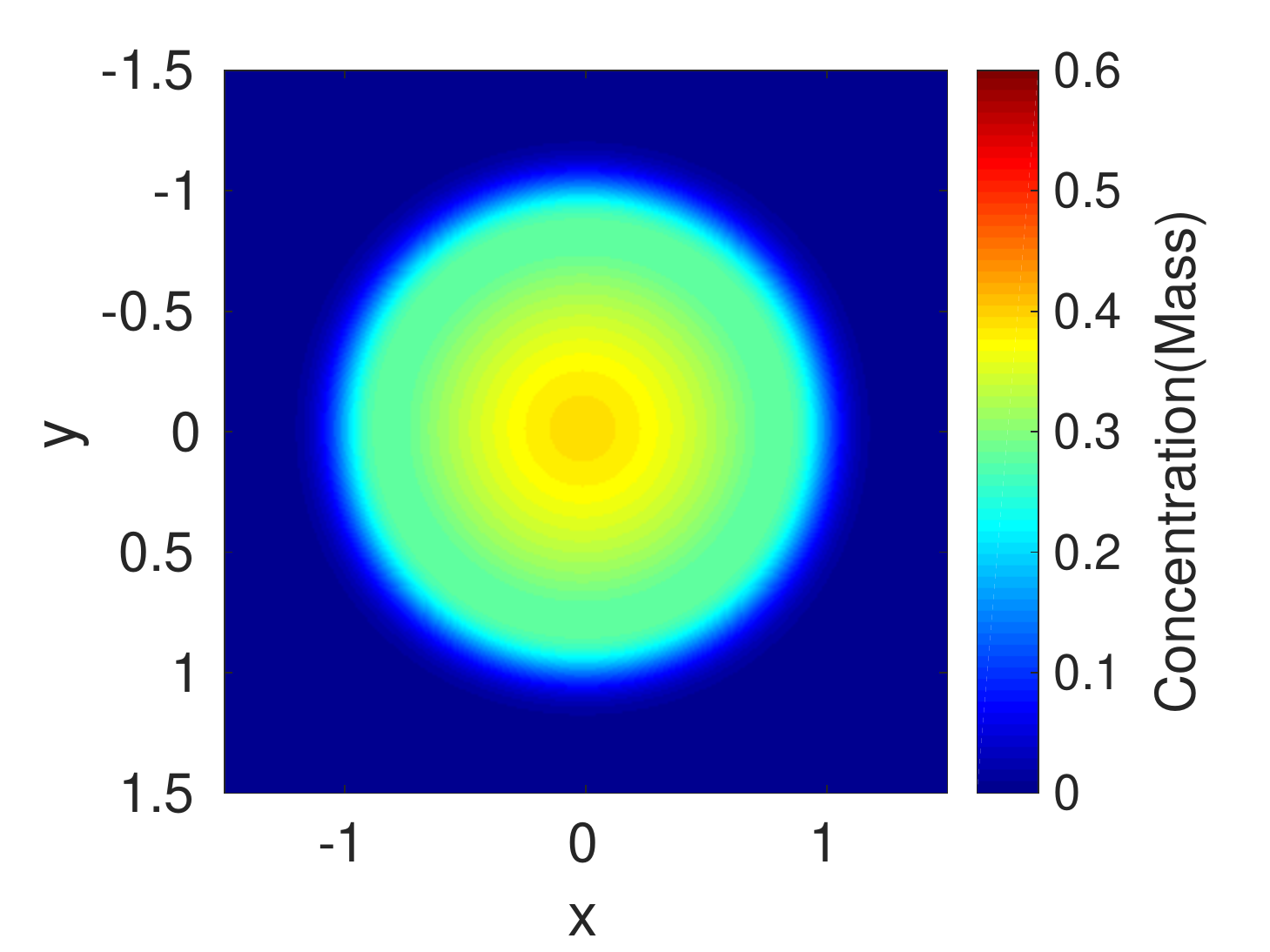}
\caption{FV $\nsol{S}_{24}$}
\end{subfigure}%
\begin{subfigure}[b]{0.25\textwidth}
\centering
\includegraphics[width=1\textwidth]{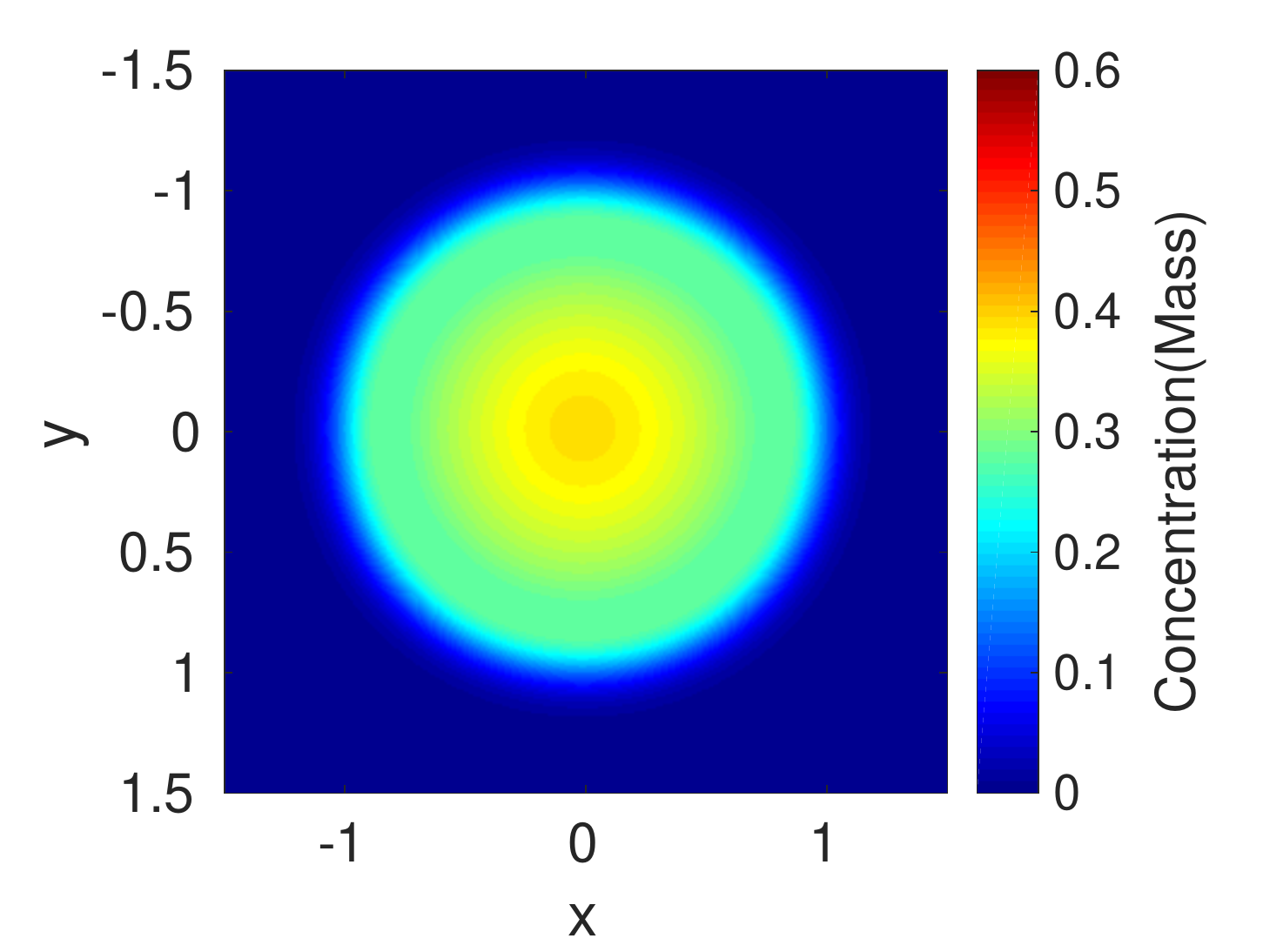}
\caption{DG $\nsol{S}_{24}$, }
\end{subfigure}%
\begin{subfigure}[b]{0.25\textwidth}
\centering
\includegraphics[width=1\textwidth]{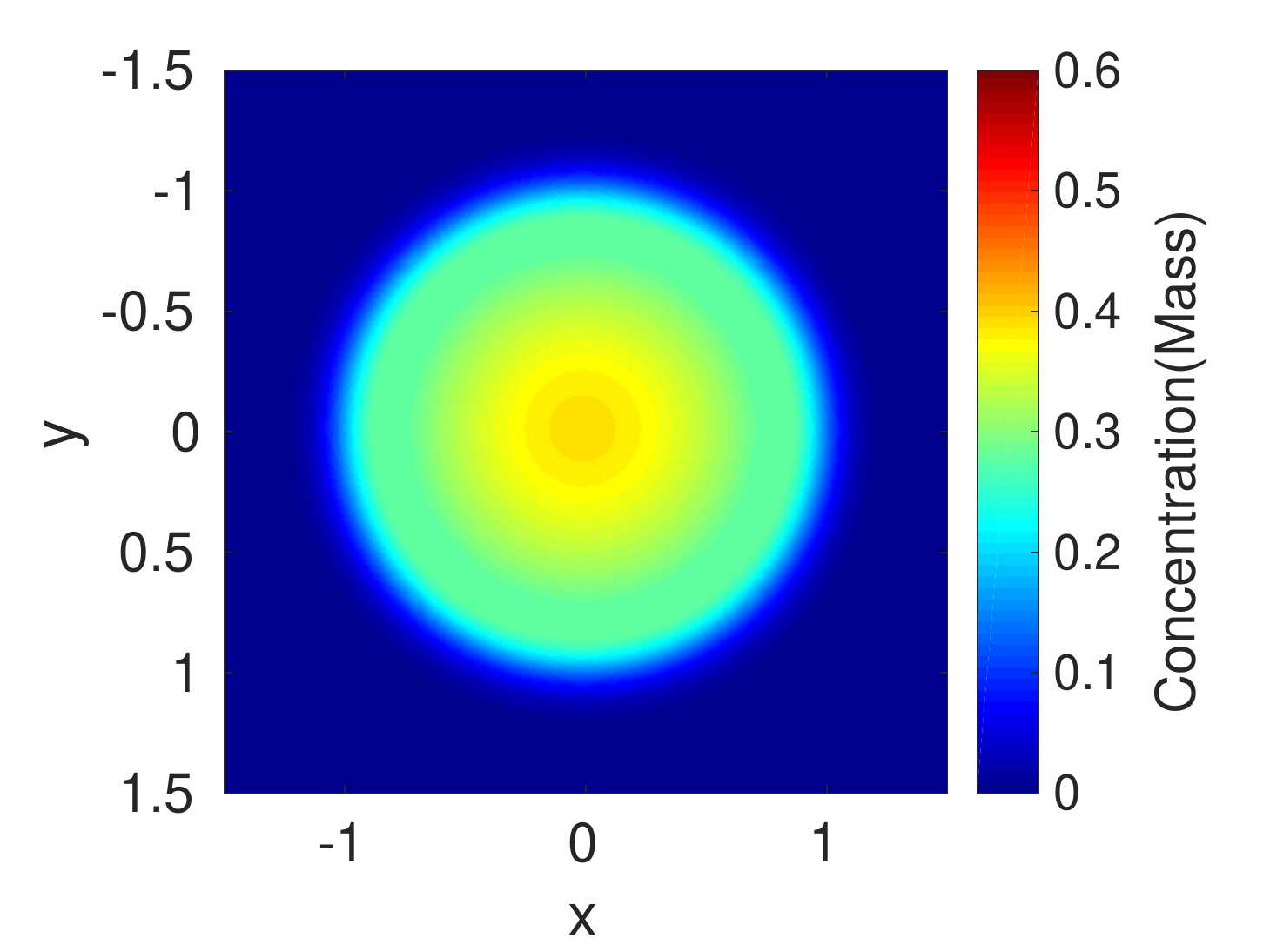}
\caption{DG-DG $\nsol{S}_{24}\nsol{S}_4$,}
\end{subfigure}%
\begin{subfigure}[b]{0.25\textwidth}
\centering
\includegraphics[width=1\textwidth]{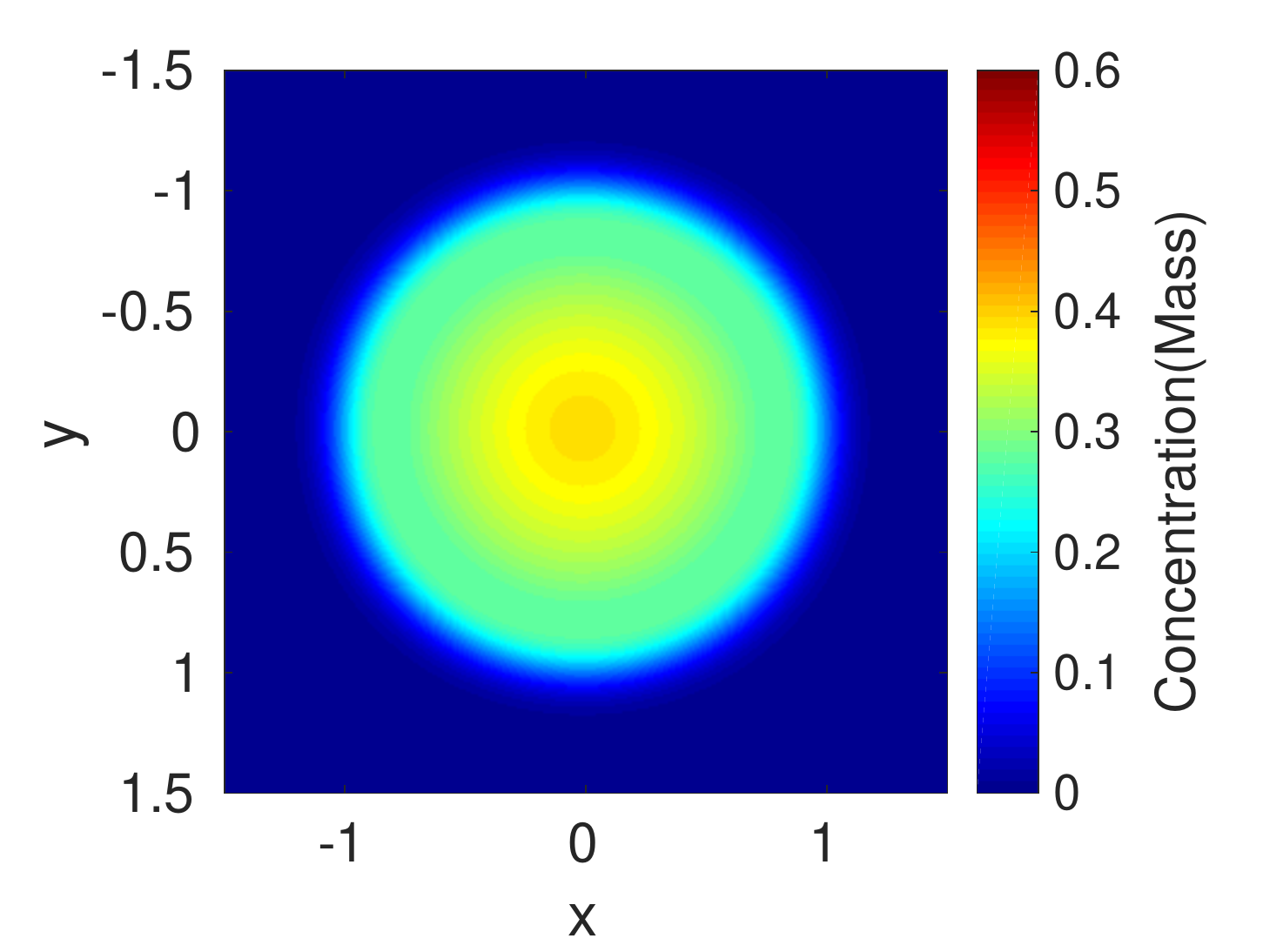}
\caption{FV-DG $\nsol{S}_{24}\nsol{S}_4$}
\end{subfigure}%
}\\
\caption{Solutions of particle concentration to the line source problem with initial condition \eqref{eq:LSinicond}. Simulations are run on a $301\times301$ grid to $t=1$ with time step $\Delta t = 5\Delta x$.  Occurrence of ray-effects is strongly related to the angular resolution of the uncollided equations. Qualitative results of each method are similar, regardless of the angular resolution of the collided equations in the hybrid methods. }
\label{isofigure1}
\end{figure}

\begin{figure}[tbhp]
\centering
\makebox[\linewidth][c]{%
\captionsetup[subfigure]{justification=centering}
\centering
\begin{subfigure}[b]{0.33\textwidth}
\centering
\includegraphics[width=1\textwidth]{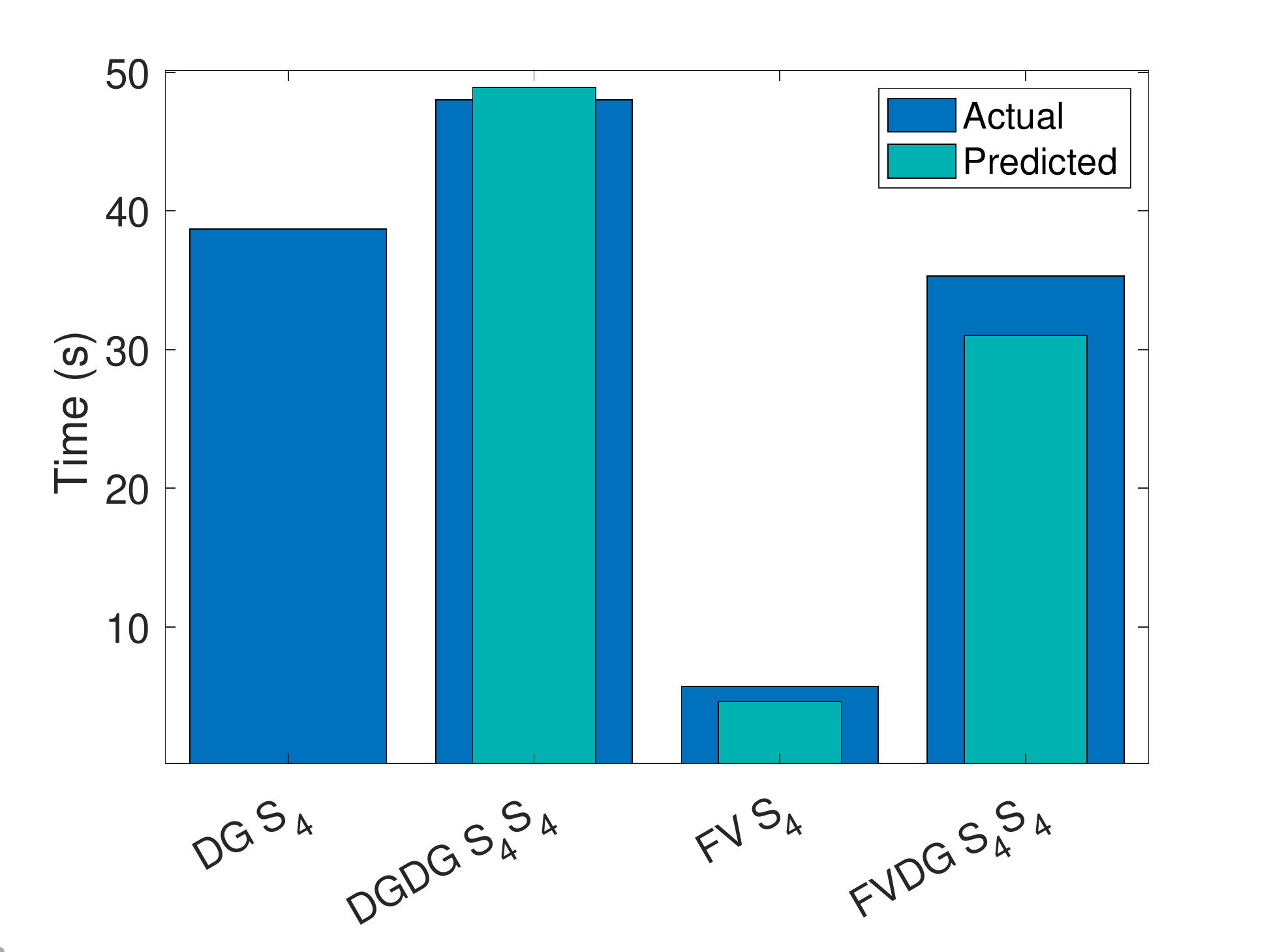}
%\caption{CPU Time (s)}
\end{subfigure}%
\begin{subfigure}[b]{0.33\textwidth}
\centering
\includegraphics[width=1\textwidth]{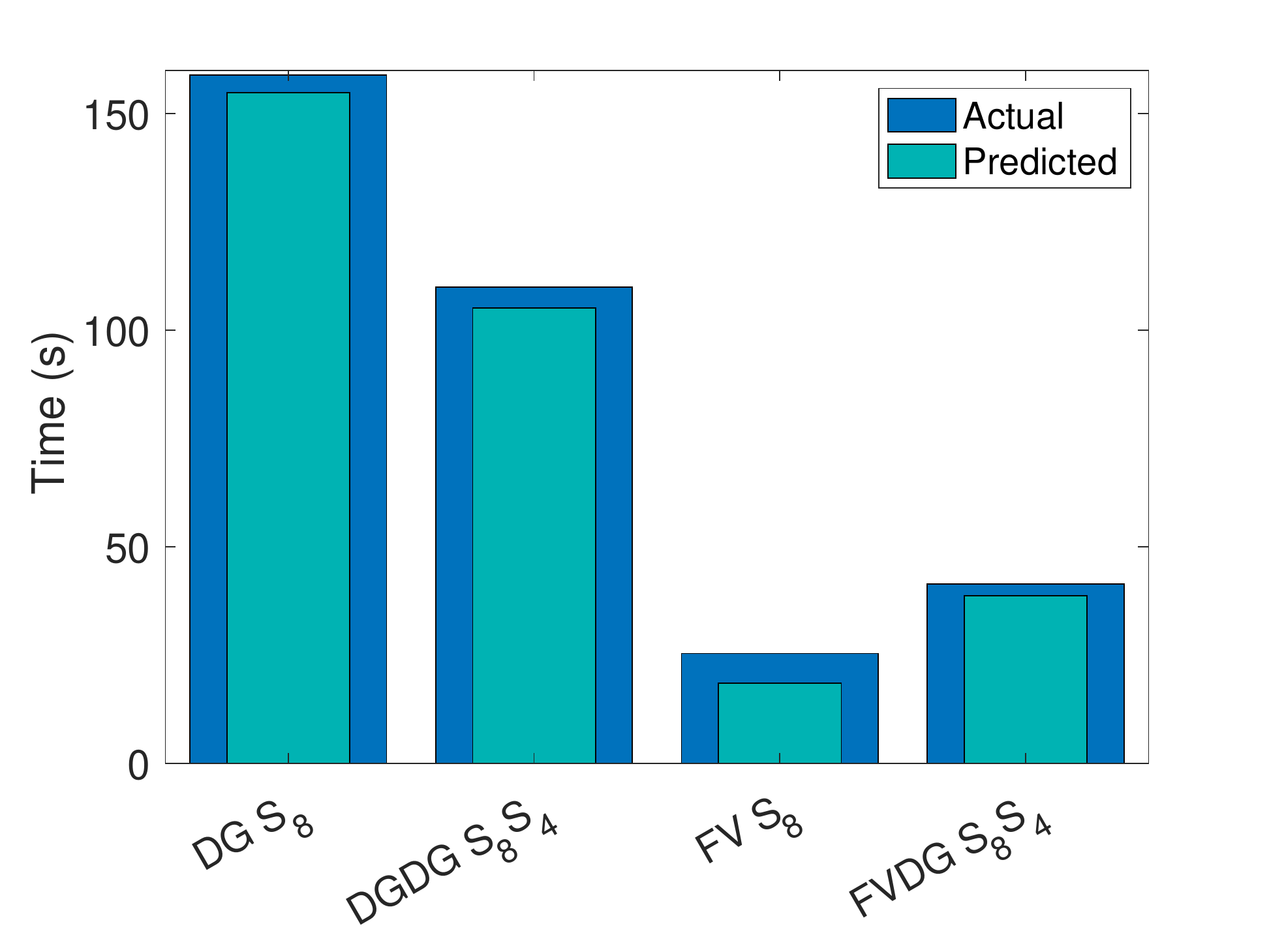}
%\caption{CPU Time (s)}
\end{subfigure}%
\begin{subfigure}[b]{0.33\textwidth}
\centering
\includegraphics[width=1\textwidth]{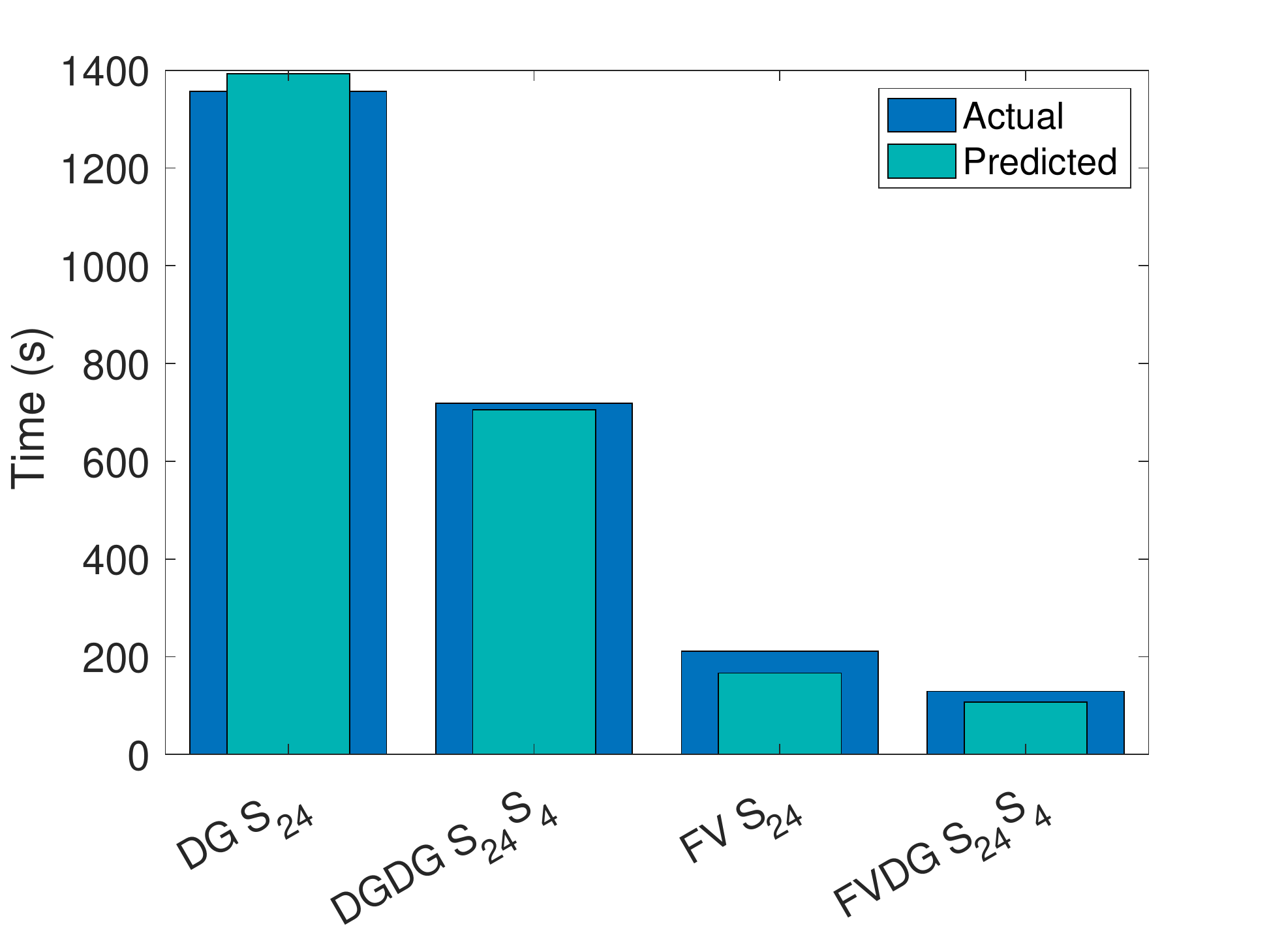}
%\caption{CPU Time (s)}
\end{subfigure}%
}\\
\makebox[\linewidth][c]{%
\captionsetup[subfigure]{justification=centering}
\centering
\begin{subfigure}[b]{0.33\textwidth}
\centering
\includegraphics[width=1\textwidth]{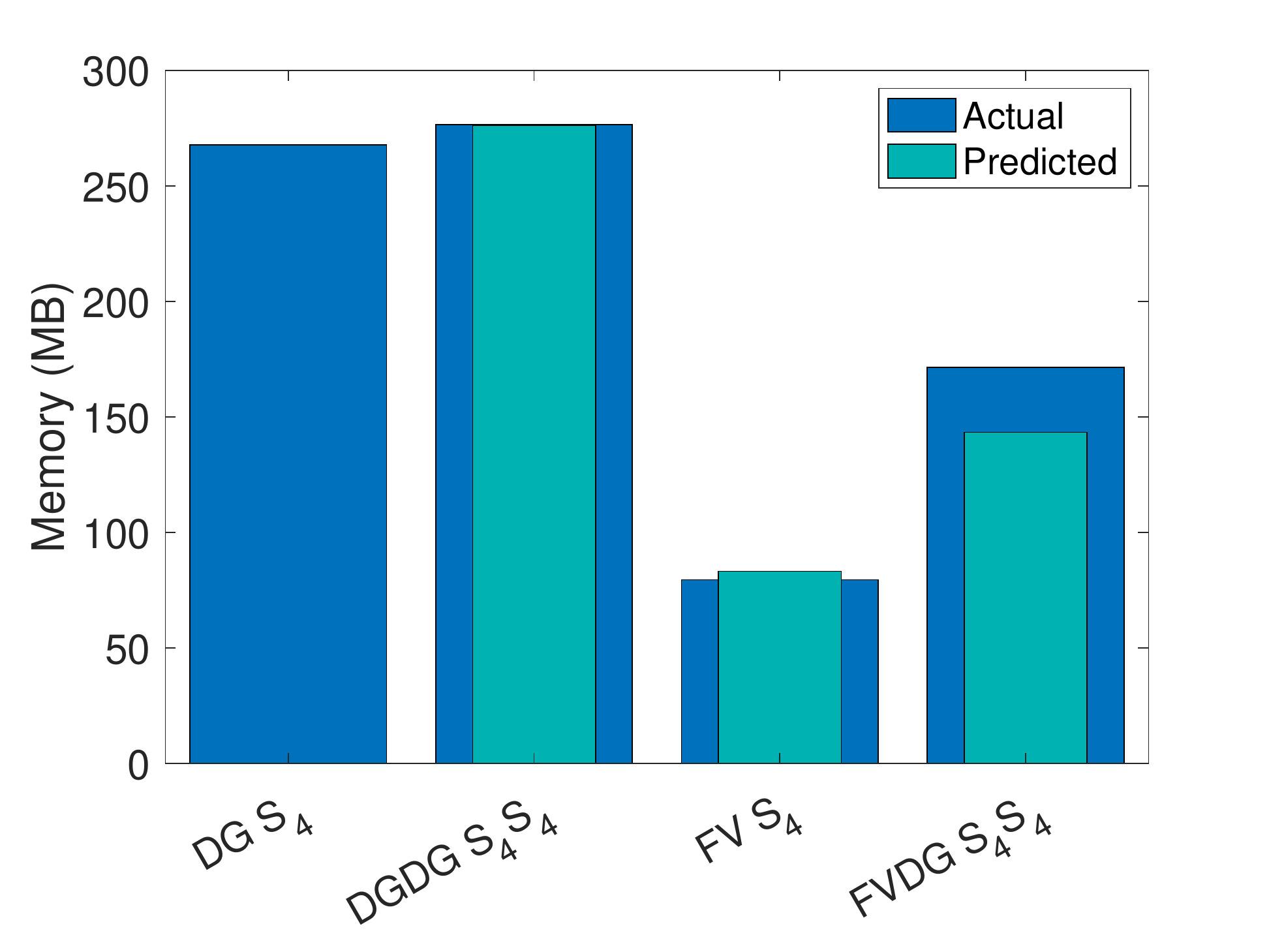}
%\caption{Memory (MB)}
\end{subfigure}%
\begin{subfigure}[b]{0.33\textwidth}
\centering
\includegraphics[width=1\textwidth]{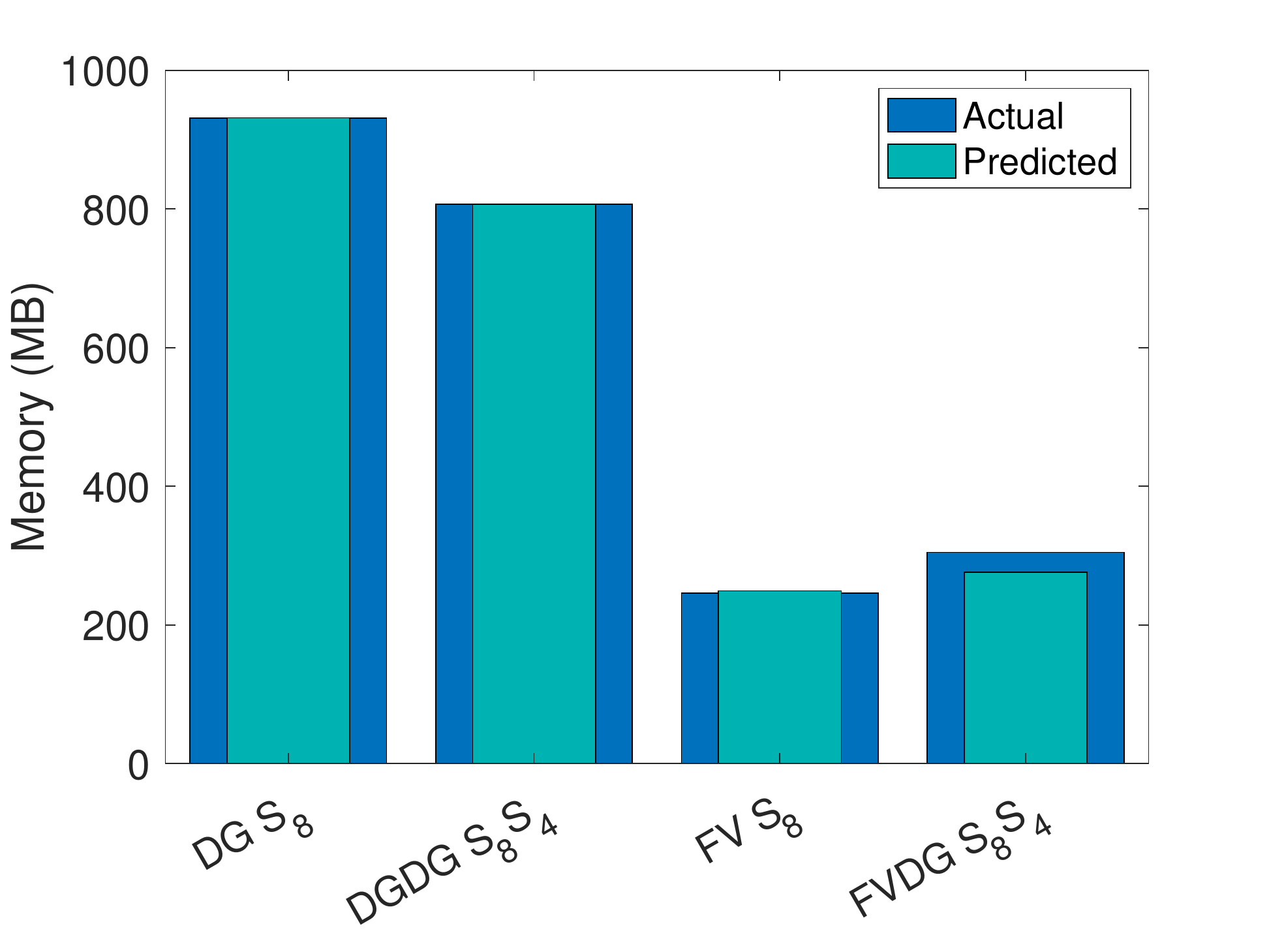}
%\caption{Memory (MB)}
\end{subfigure}%
\begin{subfigure}[b]{0.33\textwidth}
\centering
\includegraphics[width=1\textwidth]{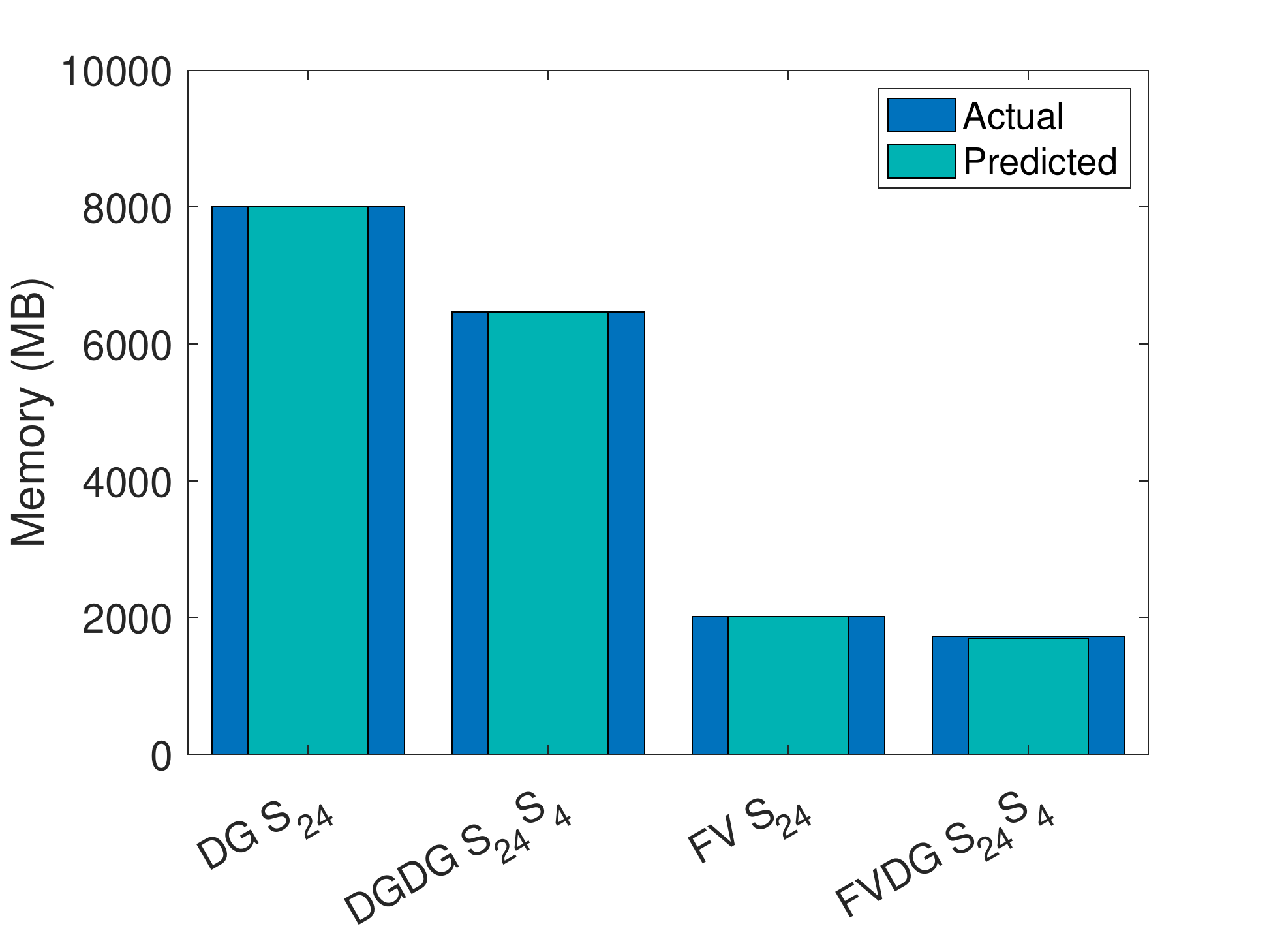}
%\caption{Memory (MB)}
\end{subfigure}%
}\\
\caption{CPU timings in seconds and maximum memory usage in megabytes for Figure \ref{isofigure1}.  
%\cdh{what kind of machine?}  
Wide blue bars represent the actual (measured) quantities.  An external script \cite{memusg} was used to query the computers memory usage. Thin teal bars were generated based on the scaling arguments, code inspection, and calibration using the  DG $\nsol{S}_4$ simulation.  
See Appendix \ref{App1} for details.
}
\label{Bar2}
\end{figure}

\subsubsection{Example 2}\label{sec:AccThin}
In this example, we demonstrate how the computational advantages of the spatial hybrid shown in Figure \ref{Bar2} can be leveraged to produce more accurate solutions in less run time.  The numerical parameters used are the same as in Section \ref{sec:Eff}, except that $\beta=0.045$ in \eqref{eq:LSinicond} and $\Delta t=3\Delta x$. 

%\cdh{This explanation is nonsense.  Don't try to expain why FV error is better than DG.  You are just guessing.   I see two things:  (i) the angular hybrid allows for more resolution in the uncollided equation.  Thus it gives a better solution, but the run time is still a factor of two greater.  The spatial hybrid brings down that run time significantly and the error is comparable.  It's actually smaller, but I wouldn't read too much into that.}  
It was observed in \cite{Crockatt,CrockattDis} that hybridization in angle can achieve more accurate numerical solutions than a standard approach by simply increasing the angular resolution in the uncollided equation while reducing the resolution in the collided equation.   The results in Figure \ref{isofigure1} are consistent with this observation, and for a standard DG discretization, the results of such a strategy are shown in the middle two columns of Table \ref{table5}.  While the angular hybrid reduces error by roughly a factor of three, it also (in this case) increases run time by a factor of two.   However,  the run time (and the memory footprint) can be reduced by introducing hybrization in space.  Indeed, the results in the far right column of Table \ref{table5} show that additional hybridization in space significantly decreases the run time of the angular hybrid, thereby producing a more accurate answer in less time when compared to the base method.

\begin{figure}[tbhp]
\centering
\makebox[\linewidth][c]{%
\captionsetup[subfigure]{justification=centering}
\centering
\begin{subfigure}[b]{0.25\textwidth}
\centering
\includegraphics[width=1\textwidth]{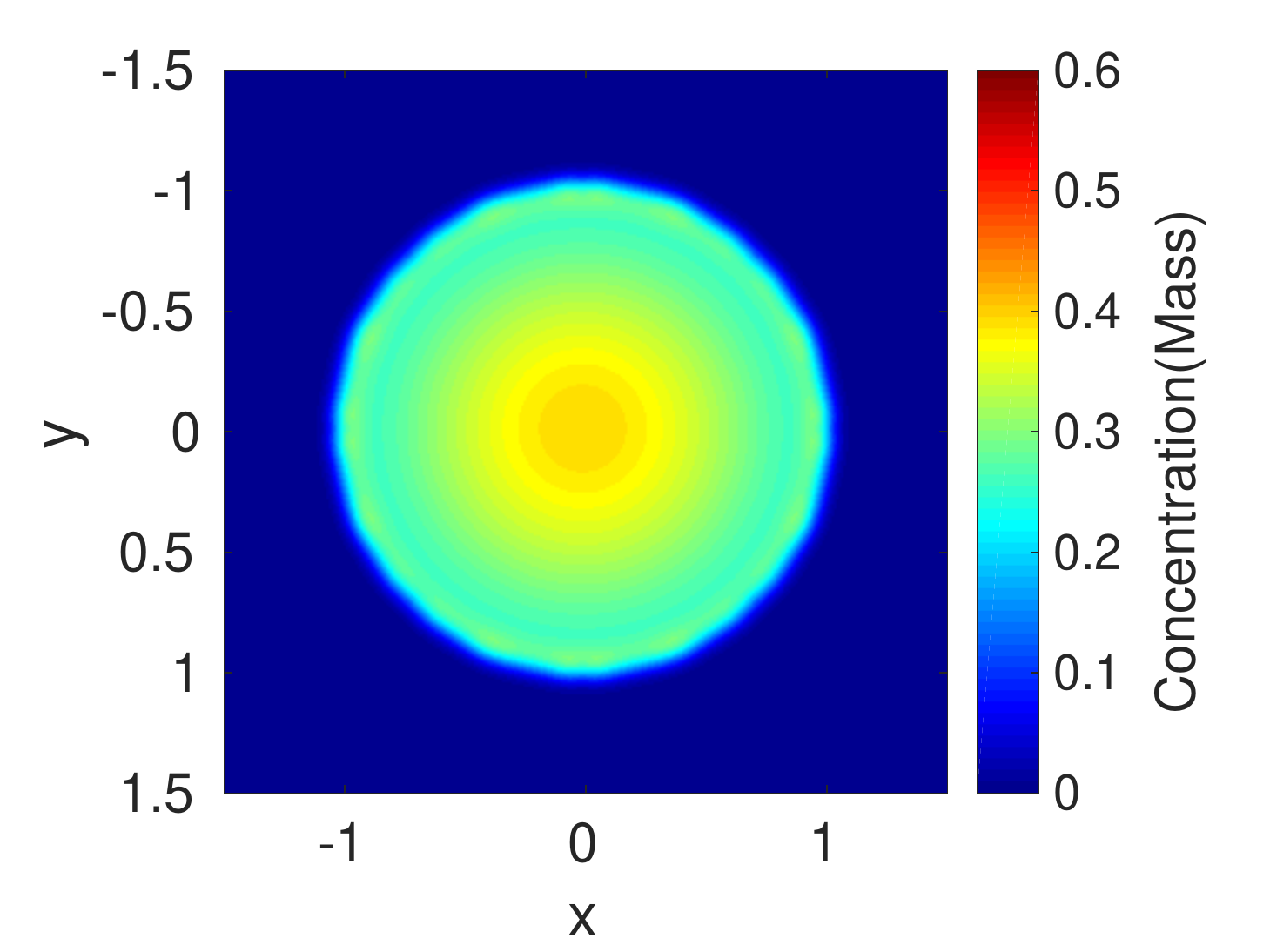}
\caption{Semi-Analytic}
\end{subfigure}%
\begin{subfigure}[b]{0.25\textwidth}
\centering
\includegraphics[width=1\textwidth]{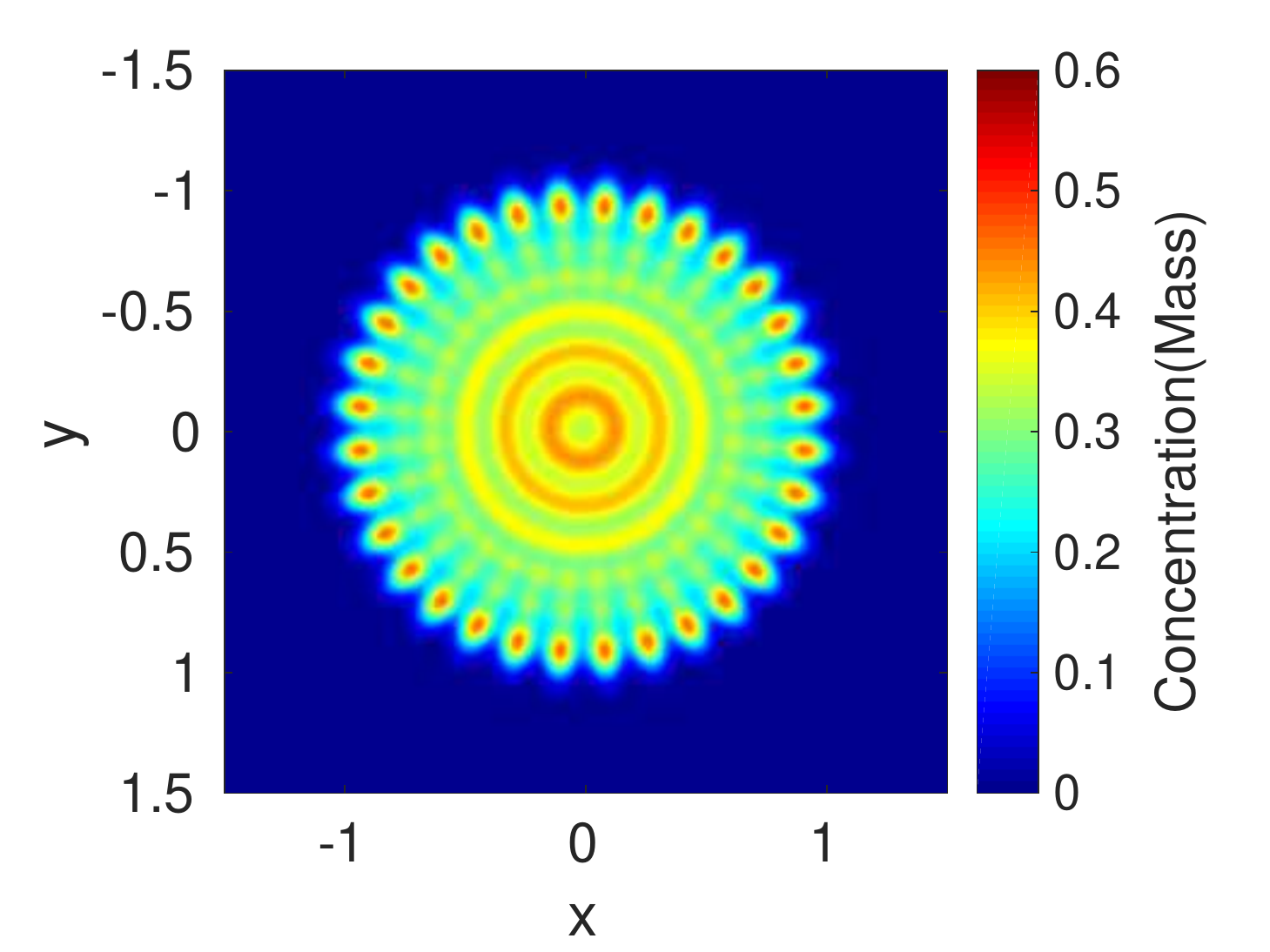}
\caption{DG $\nsol{S}_{16}$}
\end{subfigure}%
\begin{subfigure}[b]{0.25\textwidth}
\centering
\includegraphics[width=1\textwidth]{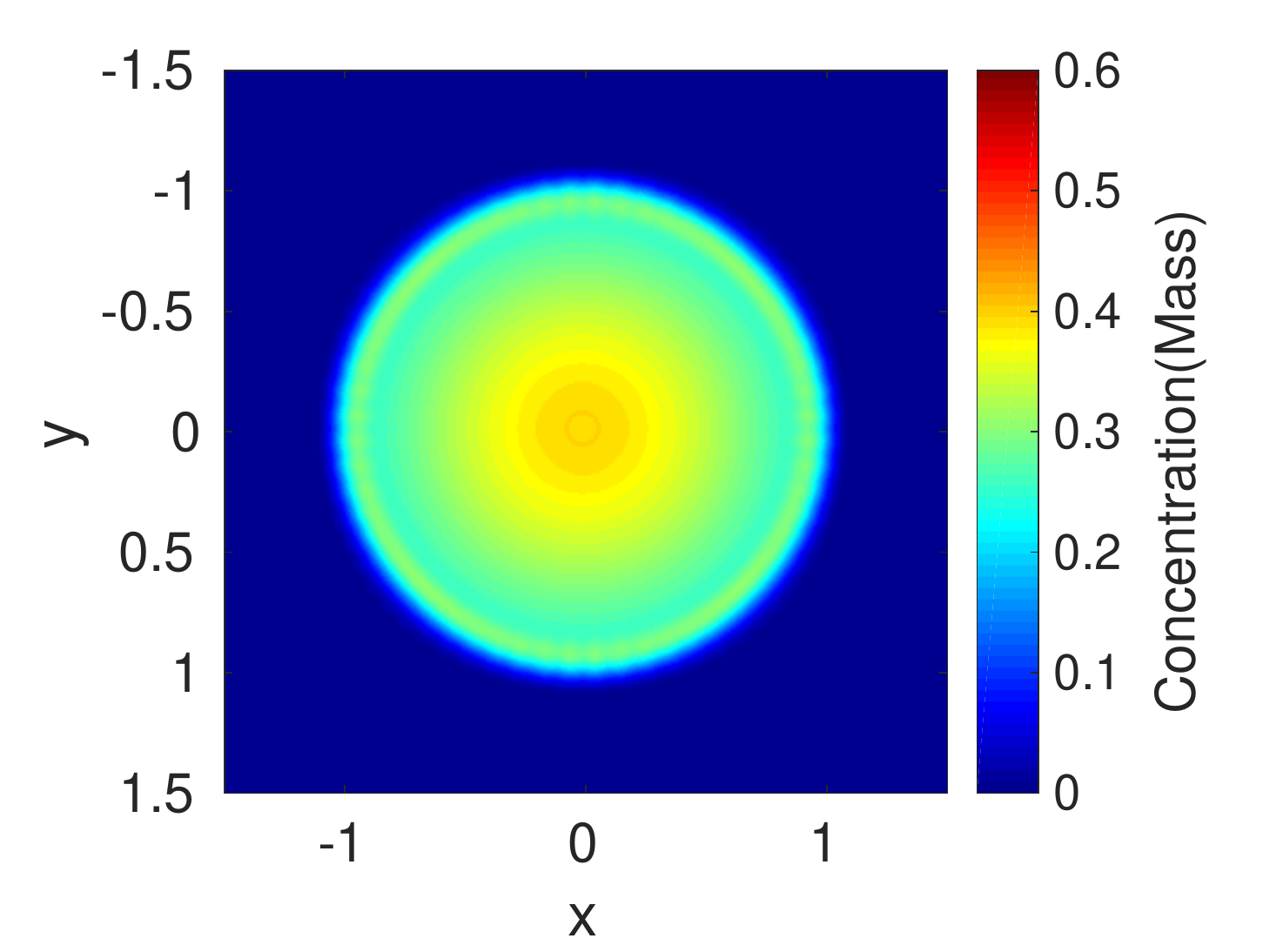}
\caption{FV-DG $\nsol{S}_{32}\nsol{S}_4$}
\end{subfigure}%
\begin{subfigure}[b]{0.25\textwidth}
\centering
\includegraphics[width=1\textwidth]{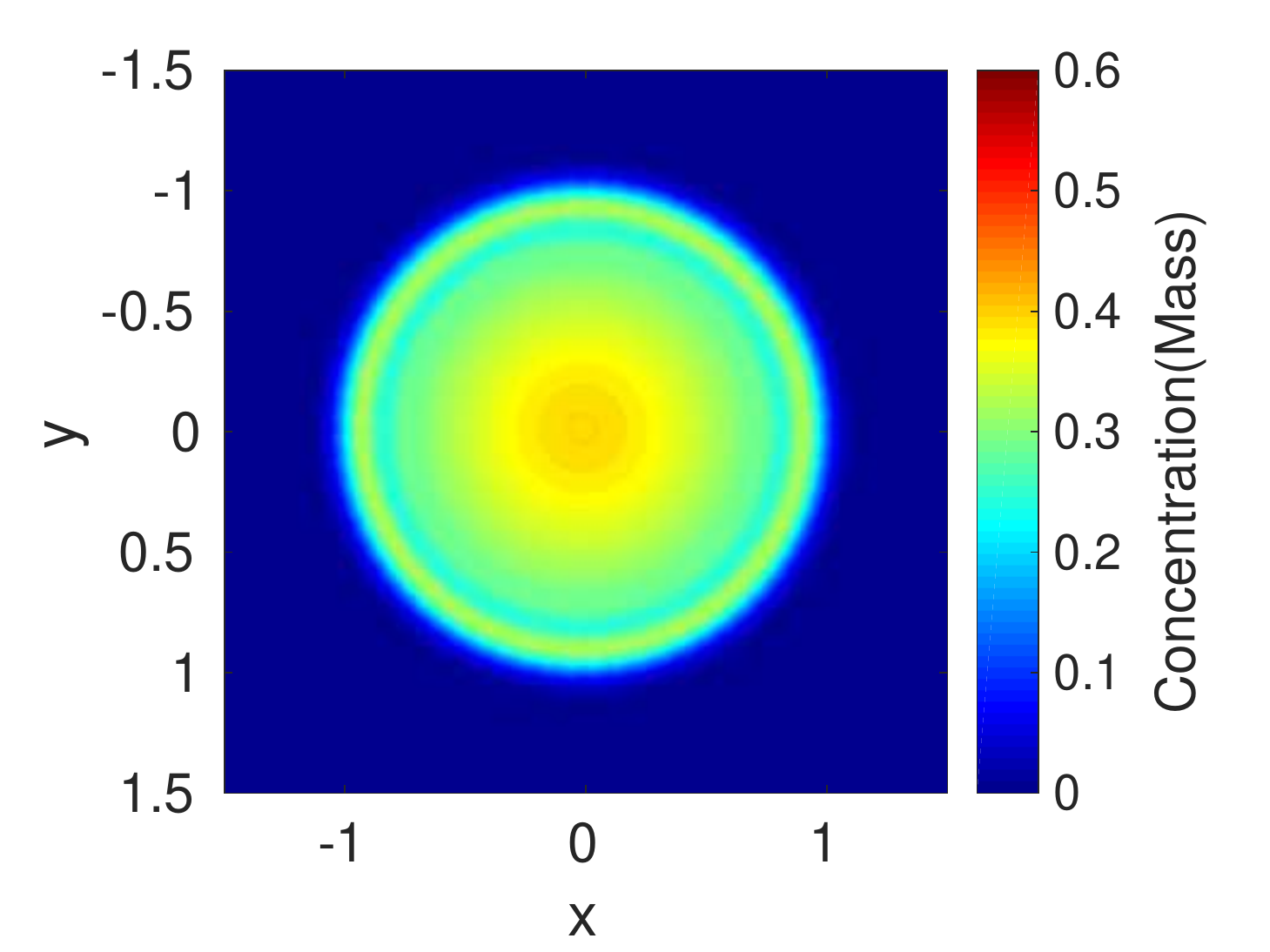}
\caption{DG-DG $\nsol{S}_{32}\nsol{S}_4$}
\end{subfigure}%
}\\
\makebox[\linewidth][c]{%
\captionsetup[subfigure]{justification=centering}
\centering
%\begin{subfigure}[b]{0.25\textwidth}
%\centering
%%\includegraphics[width=1\textwidth]{}
%%\caption{DG $\nsol{S}_{64}$ numerical reference}
%\end{subfigure}%
\begin{subfigure}[b]{0.33\textwidth}
\centering
\includegraphics[width=1\textwidth]{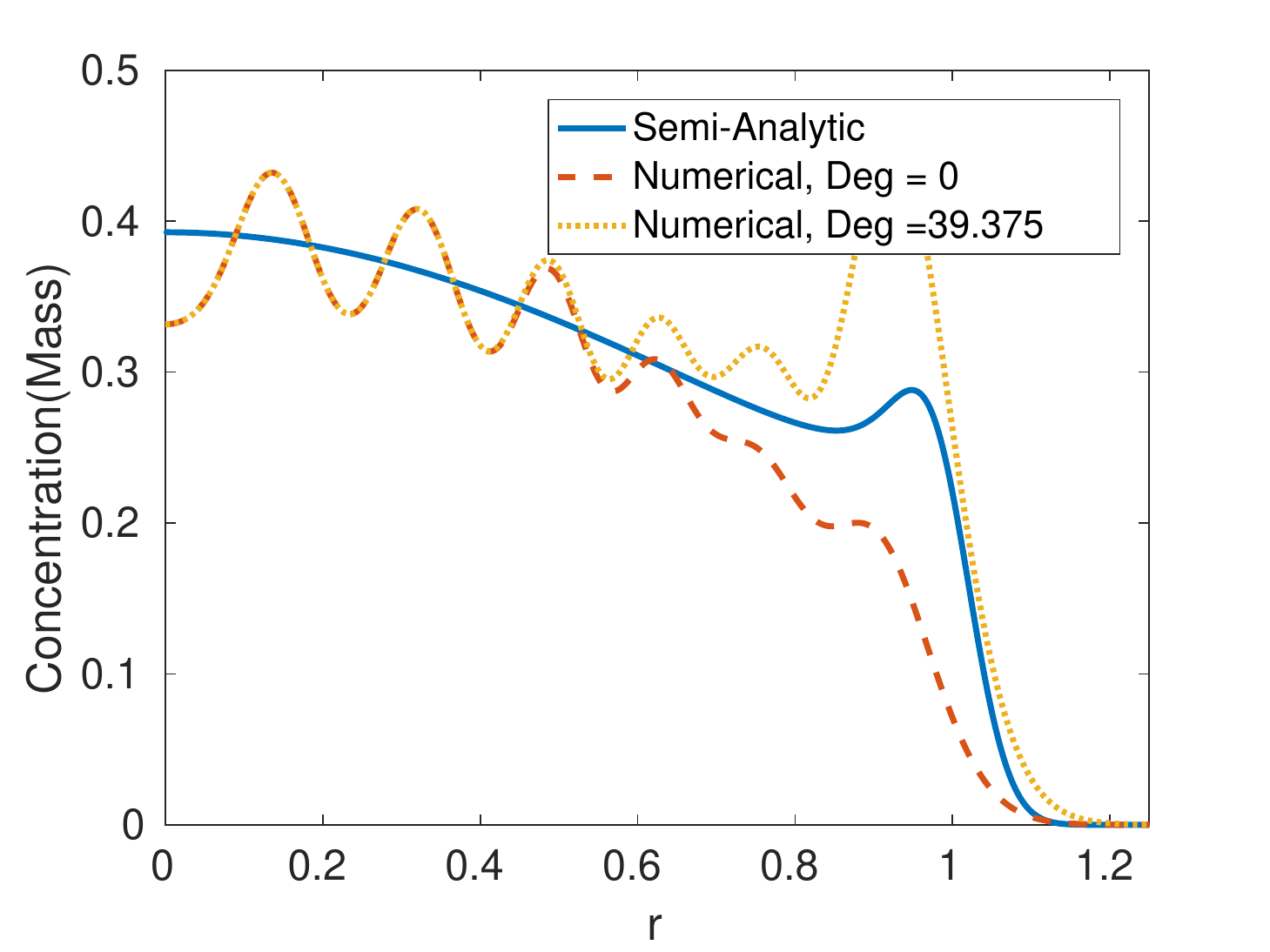}
\caption{DG $\nsol{S}_{16}$}
\end{subfigure}%
\begin{subfigure}[b]{0.33\textwidth}
\centering
\includegraphics[width=1\textwidth]{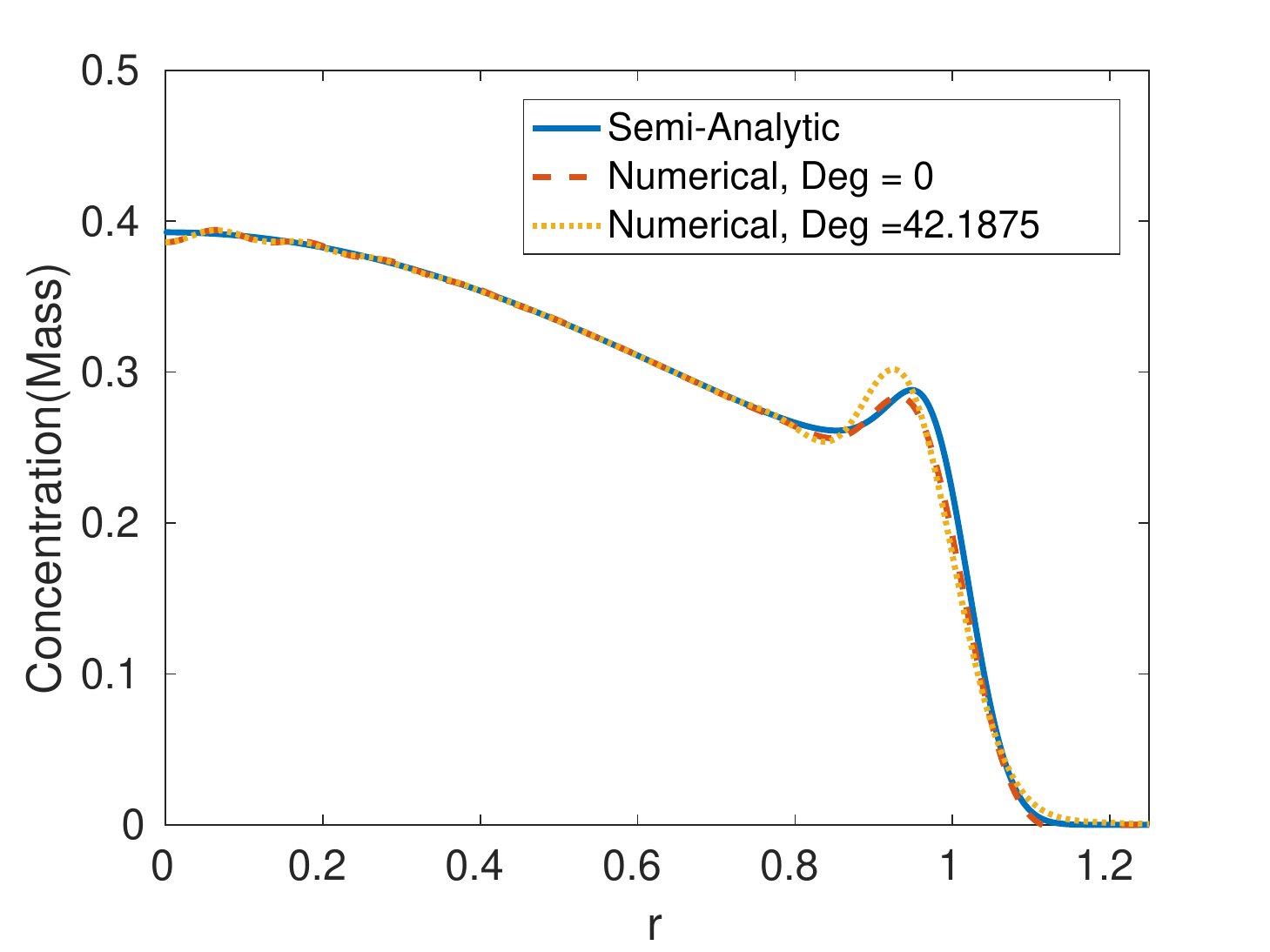}
\caption{FV-DG $\nsol{S}_{32}\nsol{S}_4$}
\end{subfigure}%
\begin{subfigure}[b]{0.33\textwidth}
\centering
\includegraphics[width=1\textwidth]{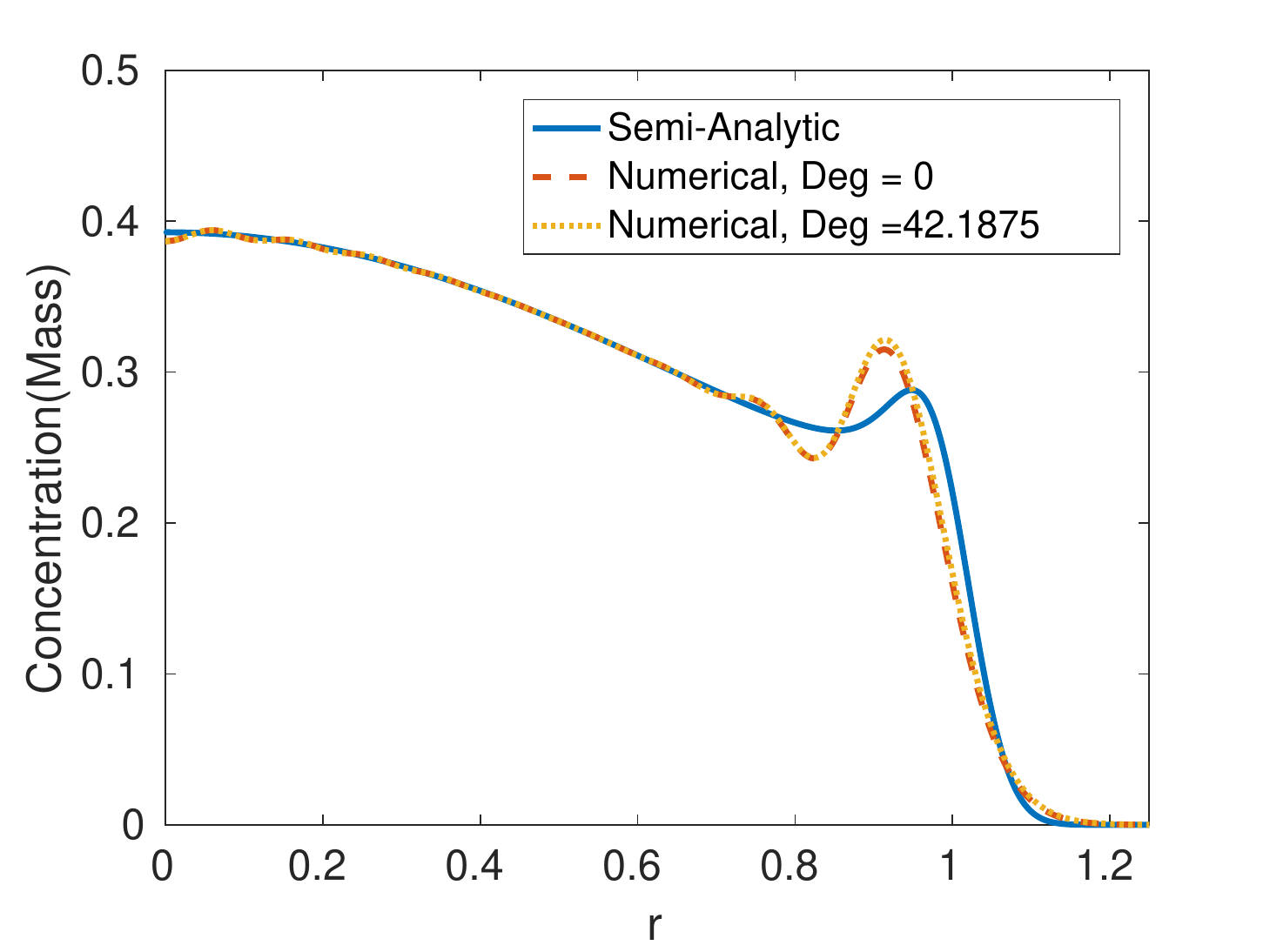}
\caption{DG-DG $\nsol{S}_{32}\nsol{S}_4$}
\end{subfigure}%
}
\caption{Line source plots of particle concentration. Simulations are run on a $301\times301$ grid to $t=1$ with $\Delta t = 3\Delta x$. Top row: two-dimensional heat maps.  Bottom row:  line-outs along the x-axis and along an ordinate direction. Solutions use initial condition \eqref{eq:LSinicond} with $\beta = 0.045$.}
\label{isofigure5}
\end{figure}

\begin{table}[tbhp]
\centering
\footnotesize
\begin{tabular}{|c|c|c|c|}
\hline
Method & DG $\nsol{S}_{16}$ & DG-DG $\nsol{S}_{32}\nsol{S}_4$ & FV-DG $\nsol{S}_{32}\nsol{S}_4$\\ \hline
Run time (mins)& 14.8 & 30.6 & 5.0 \\ \hline
$E_2$ & 0.18 & 0.067 & 0.031\\ \hline
$E_{\infty}$ & 0.46 & 0.18 & 0.11\\ \hline
%Run time (s) & Error \\
%\hline
%DG $\nsol{S}_{16}$ & 2236 & 0.089\\
%\hline
%Hy FV-DG $\nsol{S}_{32}\nsol{S}_4$ & 915 & 0.022\\
%\hline
%Hy FV-DG $\nsol{S}_{32}\nsol{S}_8$ & 1278 & 0.022\\
%\hline
\end{tabular}
\caption{Table of run times and errors (see \eqref{eq:err}) for numerical solution in Figure \ref{isofigure5}.}\label{table5}
\end{table}
\subsection{Lattice problem}
In this section, we use a more realistic example to demonstrate that the efficiency afforded by the spatial hybrid.
%{\color{blue} Experiment 3:\\
%\begin{itemize}
%    \item Solve isotropic RTE, equation \eqref{eq:RTE}
%    \item $\psi_0(x,\Omega) = 0$.
%    \item $\psi_{\rm{b}}=0$, $\epsilon = 1$, Quadrature: Product
%    \item Domain: $X=[-3.5,3.5]\times[-3.5,3.5]$, Mesh: $504\times504$, Final Time: $T = 2.8$
%    \item $\sigt(t,x)=10$ on red squares, 1 otherwise, $\sigs(t,x)=0$ on red squares, 1 otherwise, $q(t,x,\Omega) = 1$ on white square, 0 otherwise.
%    \item Compare error between the standard DG and the spatial hybrid methods using a standard DG $\nsol{S}_{66}$ as a numerical reference solution.
%    \item Compare DG $\nsol{S}_{16}$ against a spatial hybrid $\nsol{S}_{32}\nsol{S}_4$.
%\end{itemize}
%The experiment will see if we can use the computational time speed up of the spatial hybrid to get a more accurate solution if we quadruple the number of angles in the discretization of the uncollided equations for the spatial hybrid. Ray effects should be barely noticable on $\nsol{S}_{32}\nsol{S}_4$.
%}
The lattice test was first proposed in \cite{Brunner} as a cartoon loosely based on a nuclear reactor core assembly.  The problem is a checkerboard of highly scattering and highly absorbing regions with vacuum boundaries as shown in Figure \ref{Latfigure1}.  The computational domain is a $7\times 7$ square divided into smaller squares with side length one. The middle square is an isotropic source, surrounded by a checkerboard of purely scattering and purely absorbing squares as shown in Figure \ref{Latfigure1}(a) with material parameters in Figure \ref{Latfigure1}(b).  The initial data is void and the boundary conditions are absorbing, i.e., $\psi_0 =0$ and $\psi_{\rm{b}}=0$.  
We simulate the problem with $504\times 504$ spatial grid on the  domain $[-3.5,3.5]\times[-3.5,3.5]$, set $\Delta t =  10\Delta x$, and run to a final time $t=2.8$.

The results in Table \ref{table6} show again that the angular hybrid is capable of producing a better solution by increasing the angular resolution in the uncollided equation and decreasing the angular  resolution in the collided equation. However, as in Section \ref{sec:AccThin}, the resulting improvement in accuracy comes at the cost of increased run time. The spatial hybrid is able to reduce the run time significantly while producing a solution with comparable errors.

\begin{figure}[bp!]
\centering
\makebox[\linewidth][c]{%
\captionsetup[subfigure]{justification=centering}
\centering
\begin{subfigure}[b]{0.33\textwidth}
\centering
\includegraphics[width=.6\textwidth]{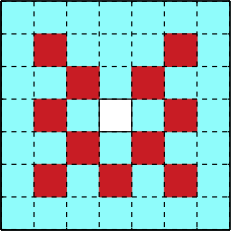}
\vspace{.22in}
\caption{Material Coefficients Location}
\end{subfigure}%

\begin{subfigure}[b]{0.33\textwidth}
\centering
\scalebox{.95}{
\begin{tabular}{l|r|r|r|}
 & $\sigma_{\rm{a}}$ & $\sigma_{\rm{t}}$ & $q$\\
 \hline
 red squares & 10 & 10 & 0\\
 \hline
 blue squares & 0 & 1 & 0\\
 \hline
 white square & 0 & 1 & 1\\
\hline
\end{tabular}}
\vspace{.3in}
\caption{Material properties}
%\label{table5}
\end{subfigure}

\begin{subfigure}[b]{0.33\textwidth}
\centering
\includegraphics[width=1\textwidth]{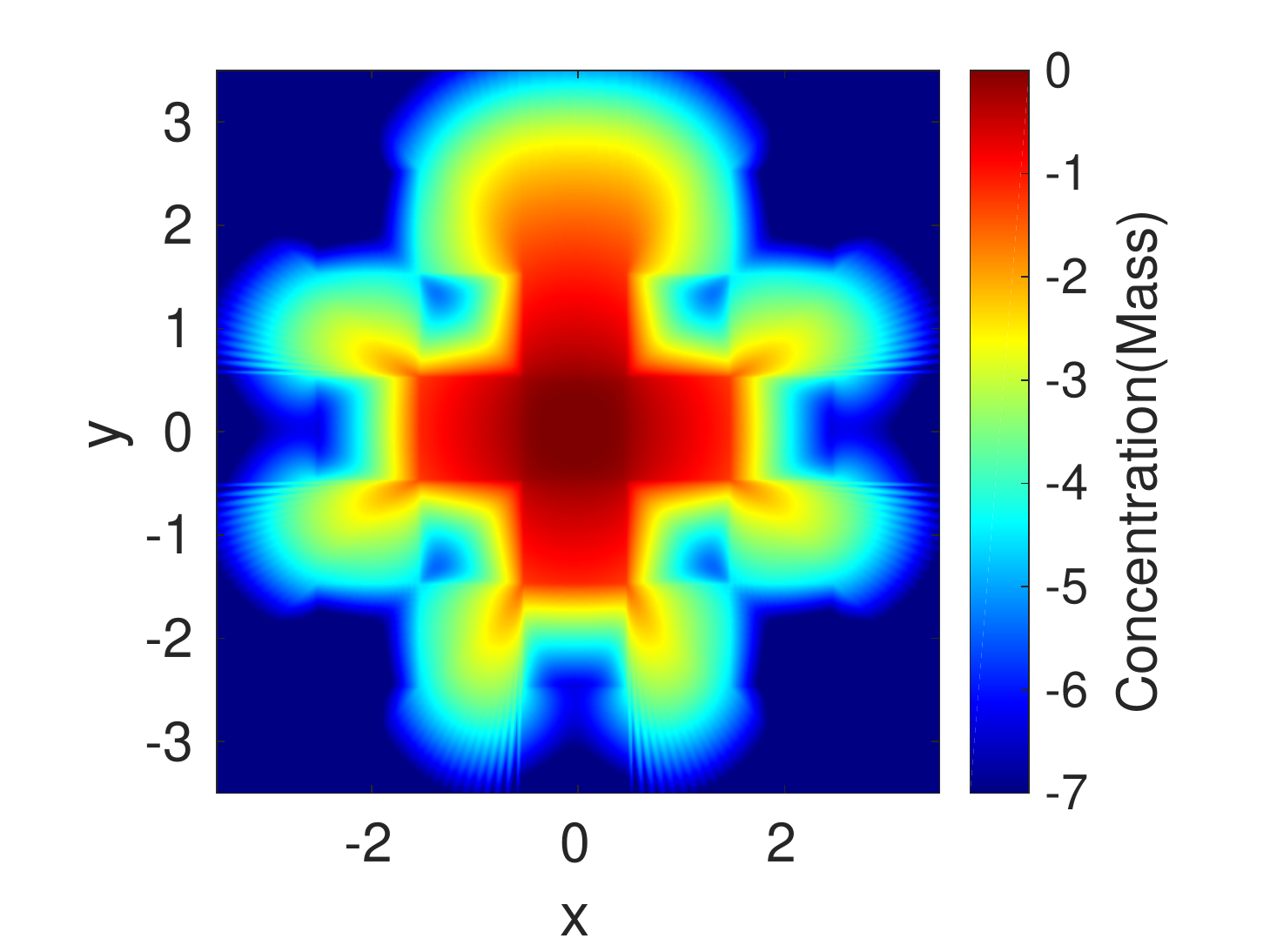}
\caption{DG $\nsol{S}_{64}$ numerical reference}
\end{subfigure}%

}\\
\makebox[\linewidth][c]{%
\captionsetup[subfigure]{justification=centering}
\centering
\begin{subfigure}[b]{0.33\textwidth}
\centering
\includegraphics[width=1\textwidth]{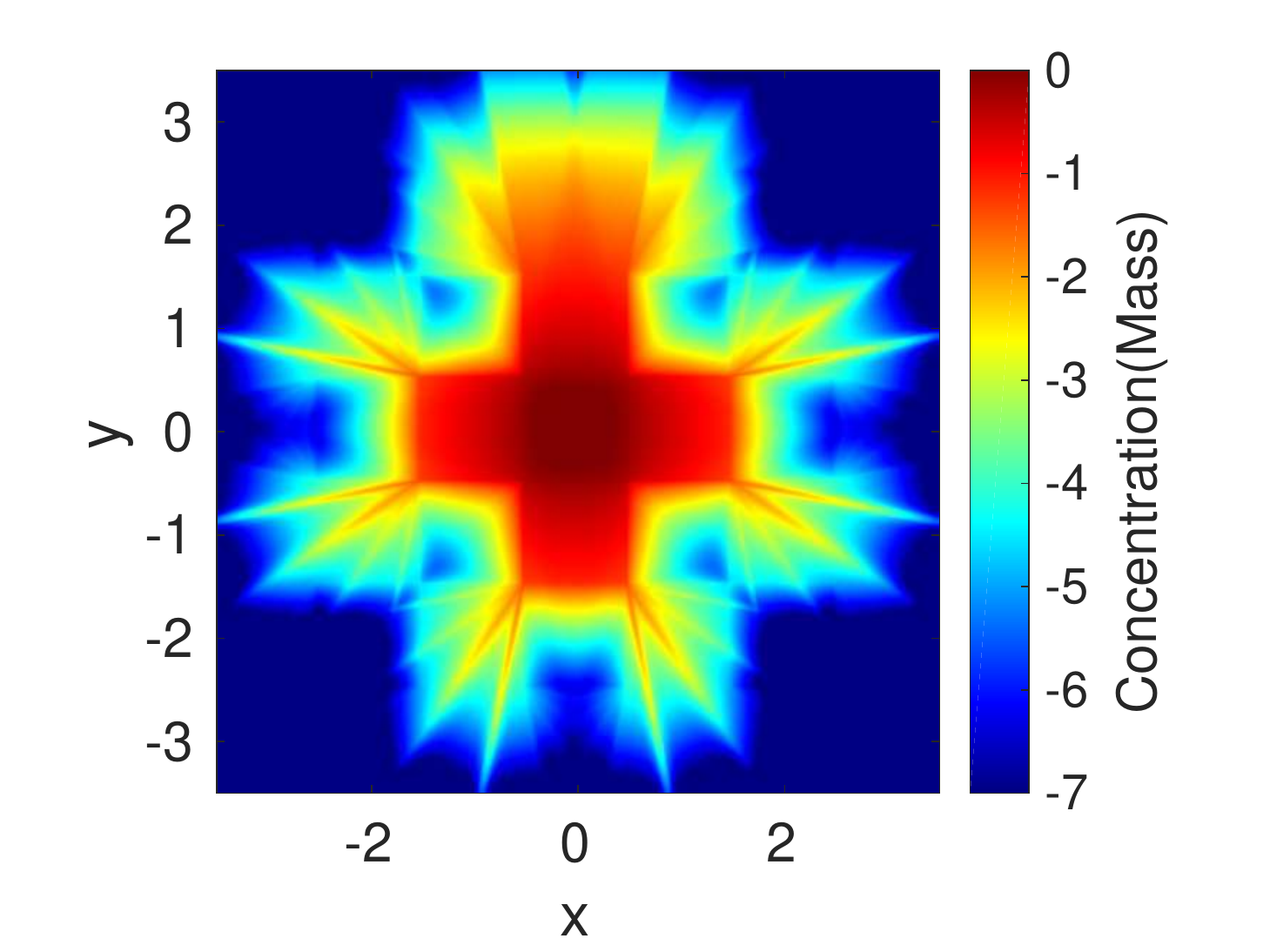}
\caption{DG $\nsol{S}_{8}$}
\end{subfigure}%
\begin{subfigure}[b]{0.33\textwidth}
\centering
\includegraphics[width=1\textwidth]{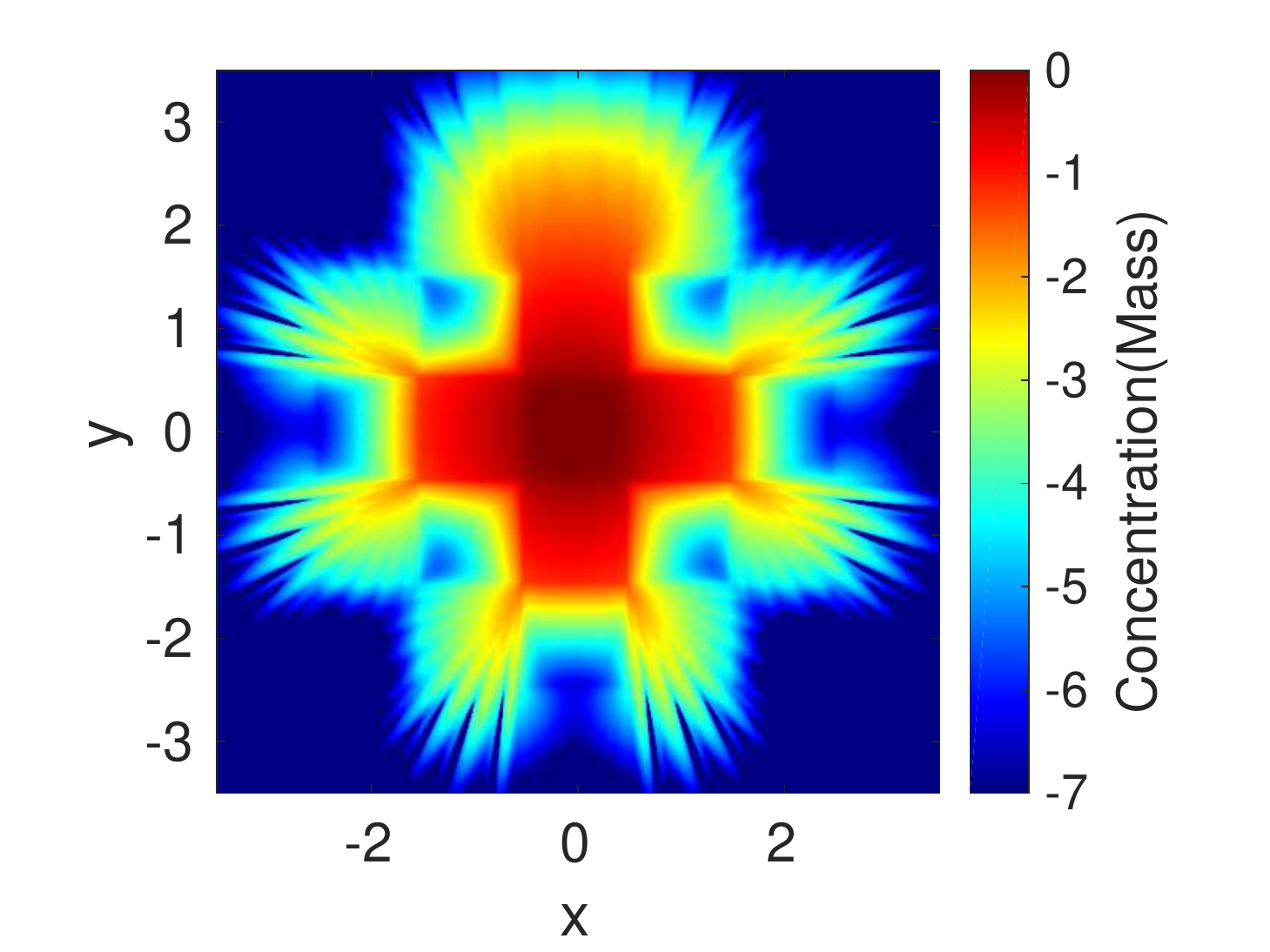}
\caption{FV-DG $\nsol{S}_{16}\nsol{S}_4$}
\end{subfigure}%
\begin{subfigure}[b]{0.33\textwidth}
\centering
\includegraphics[width=1\textwidth]{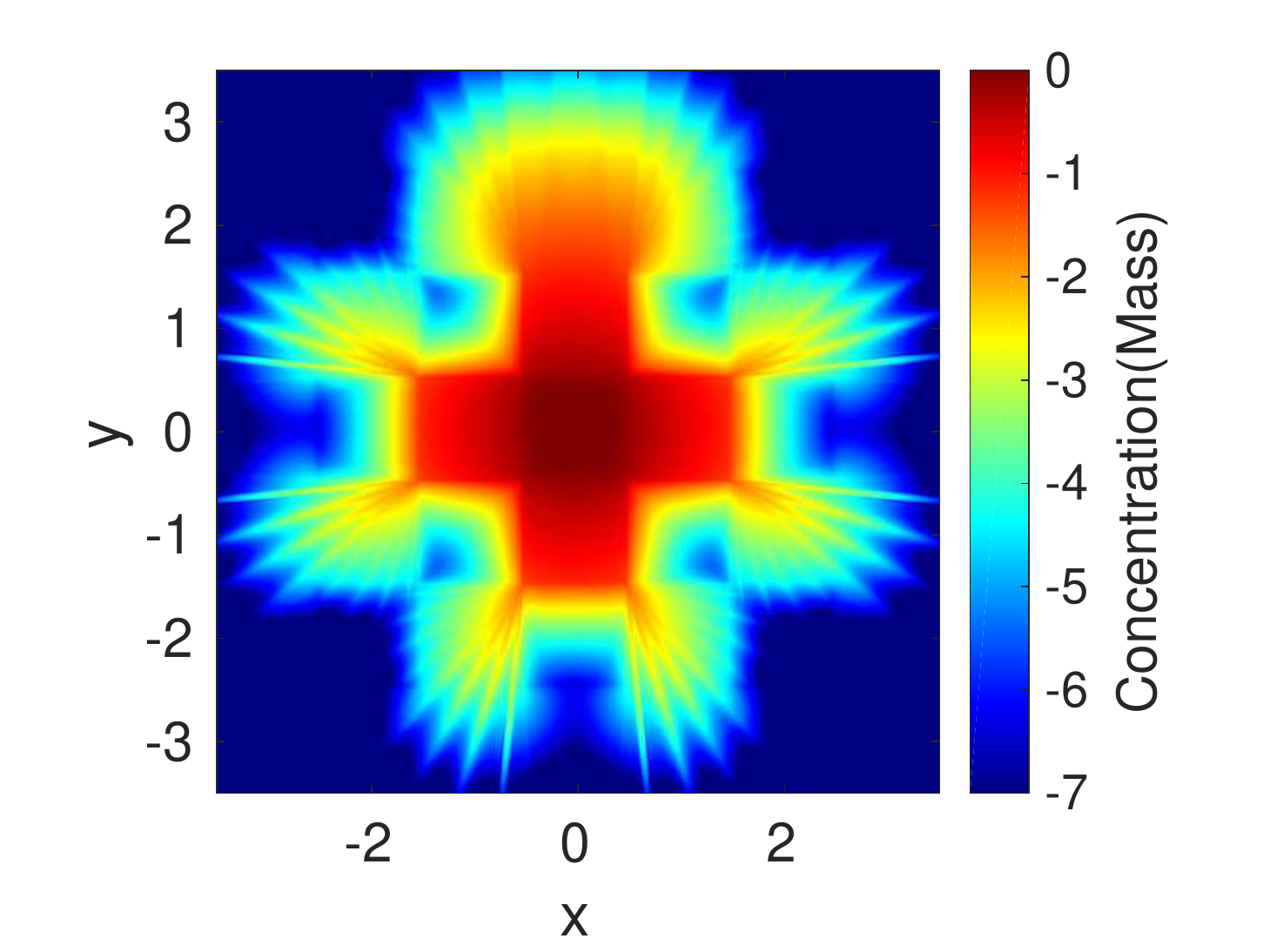}
\caption{DG-DG $\nsol{S}_{16}\nsol{S}_4$}
\end{subfigure}%
%\begin{subfigure}[b]{0.33\textwidth}
%\centering
%\includegraphics[width=1\textwidth]{}
%\caption{FV-DG $\nsol{S}_{40}\nsol{S}_4$}
%\end{subfigure}%
}\\
\caption{Lattice plots of particle concentration in a logarithmic scale. Simulations are run on a $504\times504$ grid to $t=2.8$ with $\Delta t = 10\Delta x$. Top row: material coefficient (a), with corresponding values (b), and DG numerical reference (c). Bottom row: numerical solutions.%\cdh{plots of concentration are not all that is here. what is the takeaway message from these results?  also, have you defined "particle concentration" anywhere in the paper.  you should and you might, at that point, want to mention that all plots are in terms of this quantity. }
}  
\label{Latfigure1}
\end{figure}

%\begin{figure}[H]
%\centering
%\begin{tabular}{|c|c|c|c|c|c|}
%\hline
%Method & Reference & DG $\nsol{S}_{16}$ & FV-DG $\nsol{S}_{32}\nsol{S}_4$ & FV-DG $\nsol{S}_{32}\nsol{S}_8$ & FV-DG $\nsol{S}_{40}\nsol{S}_4$\\ \hline
%Run time (mins) & 669.3 & 52.7 & 17.5 & 24.1 & 25.1\\ \hline
%Error & - & 0.0019 & 0.0013 & 0.0013 & 0.0013 \\ \hline
%%Method & Run time (min) & Error \\
%%\hline
%%DG $\nsol{S}_{16}$ & 283.4 & 0.0019\\
%%\hline
%%Hy FV-DG $\nsol{S}_{32}\nsol{S}_4$ & 105.6 & 0.0013\\
%\hline
%%Hy FV-DG $\nsol{S}_{32}\nsol{S}_8$ & 1278 & 0.022\\
%%\hline
%\end{tabular}
%\caption{Table of run times and errors}
%\label{table5}
%\end{figure}

%\begin{figure}[H]
%\centering
%\begin{tabular}{|c|c|c|c|c|c|}
%\hline
%Method & Reference & DG $\nsol{S}_{16}$ & FV-DG $\nsol{S}_{32}\nsol{S}_4$ & FV-DG $\nsol{S}_{32}\nsol{S}_8$ & FV-DG $\nsol{S}_{40}\nsol{S}_4$\\ \hline
%Run time (mins) & 569.1 & 30 & 9.1 & 12.5 & 13.1\\ \hline
%Error & - & 0.0029 & 0.0021 & 0.0021 & 0.0021 \\ \hline
%%Method & Run time (min) & Error \\
%%\hline
%%DG $\nsol{S}_{16}$ & 283.4 & 0.0019\\
%%\hline
%%Hy FV-DG $\nsol{S}_{32}\nsol{S}_4$ & 105.6 & 0.0013\\
%\hline
%%Hy FV-DG $\nsol{S}_{32}\nsol{S}_8$ & 1278 & 0.022\\
%%\hline
%\end{tabular}
%\caption{Table of run times and errors}
%\label{table5}
%\end{figure}

\begin{table}[bp!]
\centering
\footnotesize
\begin{tabular}{|c|c|c|c|c|}
\hline
Method & Reference & DG $\nsol{S}_{8}$ & DG-DG $\nsol{S}_{16}\nsol{S}_4$ & FV-DG $\nsol{S}_{16}\nsol{S}_4$ \\ \hline
Run time (mins) & 569.1 & 7.5 & 14.5 & 4.2 \\ \hline
$L^2$ Error & - & 0.0094 & 0.0029 & 0.0032 \\ \hline
$L^{\infty}$ Error & - & 0.015 & 0.0062 & 0.011 \\ \hline
%Method & Run time (min) & Error \\
%\hline
%DG $\nsol{S}_{16}$ & 283.4 & 0.0019\\
%\hline
%Hy FV-DG $\nsol{S}_{32}\nsol{S}_4$ & 105.6 & 0.0013\\
%\hline
%Hy FV-DG $\nsol{S}_{32}\nsol{S}_8$ & 1278 & 0.022\\
%\hline
\end{tabular}
\caption{Table of run times and errors (see \eqref{eq:err}) for numerical solutions in Figure \ref{Latfigure1}.}\label{table6}
\end{table}

%\cdh{Need more here.  See my discussion of the previous experiment.} Here the FV-DG solution is not quite as accurate as the DG-DG one, but again it is able to produce its solution in far less time.

%\begin{figure}[H]
%\centering
%\begin{tabular}{|c|c|c|c|}
%\hline
%Method, Two Lvl & CFL & Time (mins) & $L^2$ error \\ \hline
%$\nsol{FS}_{12,11}$ & 0.9 & 155.9 & 0.106 \\ \hline
%$\nsol{FS}_{12,7}$ & 0.9 & 110.5 & 0.271  \\ \hline
%$\nsol{FS}_{12,5}$ & 0.9 & 87.3 & 0.44  \\ \hline
%$\nsol{FS}_{12,11}$ & 3 & 57.5 & 0.181 \\ \hline
%$\nsol{FS}_{12,7}$ & 3 & 33.9 & 0.332  \\ \hline
%$\nsol{FS}_{12,5}$ & 3 & 32.8 & 0.476  \\ \hline
%\hline
%Method, Standard & & & \\ \hline
%$\nsol{FS}_{12,11}$ & 0.9 & 59.7 & 0.106 \\ \hline
%$\nsol{FS}_{12,11}$ & 3 & 24.97 & 0.181 \\ \hline
%
%%%Method & Run time (min) & Error \\
%%\hline
%%DG $\nsol{S}_{16}$ & 283.4 & 0.0019\\
%%\hline
%%Hy FV-DG $\nsol{S}_{32}\nsol{S}_4$ & 105.6 & 0.0013\\
%%\hline
%%Hy FV-DG $\nsol{S}_{32}\nsol{S}_8$ & 1278 & 0.022\\
%%\hline
%\end{tabular}
%\caption{Table of run times and errors}
%%\label{table6}
%\end{figure}

%\pagebreak
\section{Conclusion}\label{sec:Conclude}
We have presented a hybrid spatial discretization of the radiation transport equation (RTE) based on the formulation introduced in \cite{Coryhybrid}.  The method relies on the separation of the RTE into two components, one of which is discretized in space with a finite volume (FV) method, which uses less memory, and the other with a discontinuous Galerkin (DG) method, which performs better in scattering dominated regimes.   The spatial discretization is then combined with a DIRK method for time integration and a discrete ordinate discretization in angle.

Following the approach in \cite{Guermond}, we show that, like standard DG, the hybrid spatial discretization converges, in the limit of infinite scattering, to a  consistent discretization of the diffusion limit \eqref{eq:difflimeq}.   We also demonstrate  the hybrid approach is more efficient, in terms of memory usage and computational time than a uniform DG discretization.  The formulation in \cite{Coryhybrid} allows for hybridization in both space and angle, and we show how a combination of the two can improve the efficiency of simulations for two benchmark problems.

\begin{appendices}
%\section{Diagonally Implicit Runge Kutta}
%
%\input{finitevolume.tex}
%
%\input{discontGal.tex}
\section{Computational scaling}\label{App1}

%Here we try to give better context to values found in Figures \ref{Table 0} and \ref{Table 1} and the charts in Figure \ref{Bar2}. 
In this appendix, we explain the details behind the numbers in Table \ref{Table 0} and the charts in Figure \ref{Bar2}.  All computations rely on four main subroutines:  \texttt{source}, which implements \eqref{eq:setsource}; \texttt{integrate}, which implements \eqref{eq:integrate}; \texttt{copy}, which implements  \eqref{eq:copy}; and \texttt{sweep}, which implements the inversion of the operator in \eqref{eq:sweep} for a given source. 
In the \texttt{source} subroutine, a known source function is used to compute a coefficient for every unknown in a mesh cell and every angle. In the \texttt{integrate} subroutine, the angular unknowns associated to each spatial unknown are mapped to a single value.
 In the \texttt{copy} routine, a single value of each spatial unknown is copied across all angles. The \texttt{sweep} routine solves a linear system (which in part requires a matrix inversion of a size equal to the number of spatial unknowns for every angle and mesh cell). When the cross-sections are constant, which we assume for the experiment in Section \ref{sec:Eff}, the matrix used in the inversion can be pre-factored. The result is that the usual $O(n^3)$ operation count for an $n \times n$ matrix is reduced to $O(n^2)$, where $n$ is the number of unknowns.

 The cost of each of the subroutines above depends on the number of angles, number of mesh cells, and number of unknowns per mesh cell.  In standard DG or FV codes, we use $N^*$ angles and $M$ cells. In hybrid DG-DG and FV-DG we use $N_{\rm u}^*$ and $N_{\rm c}^*$  points for the uncollided and collided equations, respectively, on $M$ cells. FV methods will use one unknown per angle per cell for both quadrilateral  and triangular cells, and DG methods will use $2^d$ unknowns for quadrilateral cells and $(d+1)$ unknowns for triangular cells where $d$ is the dimension of the spatial domain.  With these values the  number of flops for each subroutine is given in Tables \ref{App Table 0} and \ref{App Table 1}.  The results in Table \ref{Table 0} are obtained by summing across each row in Tables \ref{App Table 0} or \ref{App Table 1}.

\begin{table}[tbhp]
\centering
\footnotesize
\begin{tabular}{|c|c|c|c|c|}
\hline
{}& \texttt{source} & \texttt{integrate} & \texttt{copy} & \texttt{sweep}\\
\hline
FV & $N^*$ & $N^*$ & $N^*$ & $N^*$\\
\hline
DG & $2^d N^*$ & $2^d N^*$ & $2^d N^*$ & $2^{2d} N^*$\\
\hline
DG-DG & $2^dN_{\rm u}^*$ & $2^d(N_{\rm u}^* + N_{\rm c}^*)$ & $2^d(N_{\rm u}^* + N_{\rm c}^*)$ & $2^{2d}(N_{\rm u}^* + N_{\rm c}^*)$\\
\hline
FV-DG & $N_{\rm u}^*$ & $N_{\rm u}^* + 2^dN_{\rm c}^*$ & $N_{\rm u}^* + 2^dN_{\rm c}^*$ & $N_{\rm u}^* + 2^{2d}N_{\rm c}^*$\\
\hline
\end{tabular} 
\caption{Computational scaling leading orders per rectangular element.}\label{App Table 0}
\end{table}

\begin{table}[tbhp]
\centering
\footnotesize
\begin{tabular}{|c|c|c|c|c|}
\hline
{}& \texttt{source} & \texttt{integrate} & \texttt{copy} & \texttt{sweep}\\
\hline
FV & $N^*$ & $N^*$ & $N^*$ & $N^*$\\
\hline
DG & $(d+1)N^*$ & $(d+1)N^*$ & $(d+1)N^*$ & $(d+1)^2 N^*$\\
\hline
DG-DG & $(d+1)N_{\rm u}^*$ & $(d+1)(N_{\rm u}^* + N_{\rm c}^*)$ & $(d+1)(N_{\rm u}^* + N_{\rm c}^*)$ & $(d+1)^2(N_{\rm u}^* + N_{\rm c}^*)$\\
\hline
FV-DG & $N_{\rm u}^*$ & $N_{\rm u}^* + (d+1)N_{\rm c}^*$ & $N_{\rm u}^* + (d+1)N_{\rm c}^*$ & $N_{\rm u}^* + (d+1)^2N_{\rm c}^*$\\
\hline
\end{tabular} 
\caption{Computational scaling leading orders per triangular element.}\label{App Table 1}
\end{table}

To generate the predictions in Figure \ref{Bar2}, we use the leading orders in Table \ref{App Table 0} along with knowledge of how many times each subroutine is called within a program.
% we are able to generate the predicted values seen in Figure \ref{Bar2}.  
Let $T_{\rm ref}$ be the minutes it takes to compute the standard DG reference, and let $\texttt n_{\rm so}$, $\texttt n_{\rm int}$, $\texttt n_{\rm cp}$ and ${\texttt n_{\rm sw}}$ be the total occurrences of the \texttt{source}, \texttt{integrate}, \texttt{copy}, and \texttt{sweep} subroutines respectively in the reference simulation. These are acquired by knowing either how many times these subroutines are performed in the code per iteration of the iterative solver (GMRES in our case) or per time step. We assume the total number of time steps and iterations of the iterative solver are known (the later based on calibration with the DG $\nsol{S}_4$ simulation). 
%\cdh{Is what I just said correct?}
Then $T_{\rm ref} = (2^d{\texttt n_{\rm so}} + 2^d{\texttt n_{\rm int}} + 2^d{\texttt n_{\rm cp}} +2^{2d} {\texttt n_{\rm sw}})kN^*M$, where $k$ is an unknown conversion constant that is assumed to be independent of the type of method used.  This implies that
\begin{equation}
kN^*M = \frac{T_{\rm ref}}{(2^d{\texttt n_{\rm so}}+2^d{\texttt n_{\rm int}} + 2^d{\texttt n_{\rm cp}} +2^{2d} {\texttt n_{\rm sw}})},
\end{equation}
 Let $\texttt n_{\rm int,m}$, $\texttt n_{\rm cp,m}$ and ${\texttt n_{\rm sw,m}}$ be the total occurrences of the \texttt{integrate}, \texttt{copy}, and \texttt{sweep} subroutines respectively with a loop structure involving $N_{\rm m}^*$ angles for $\rm m\in\{u,c\}$. With the constant $k$ determined, we assume the total number of time steps and iterations of the iterative solver in the other simulations are the same as in the reference simulation. The predicted times for the other three methods in Figure \ref{Bar2} are calculated as follows:
\begin{align}
T_{\text FV} & = ({\texttt n_{\rm so}} + {\texttt n_{\rm int}} + {\texttt n_{\rm cp}} + {\texttt n_{\rm sw}})kN^*M\\
T_{\text {DG-DG}} & = \left(({\texttt n_{\rm so}} + {\texttt n_{\rm int,u}} + {\texttt n_{\rm cp,u}} +2^{d} {\texttt n_{\rm sw,u}})N_{\rm u}^* + ({\texttt n_{\rm int,c}} + {\texttt n_{\rm cp,c}} +2^d{\texttt n_{\rm sw,c}})N_{\rm c}^*\right)2^dkM\\
T_{\text {FV-DG}} & = \left(({\texttt n_{\rm so}} + {\texttt n_{\rm int,u}} + {\texttt n_{\rm cp,u}} + {\texttt n_{\rm sw,u}})N_{\rm u}^* + 2^d({\texttt n_{\rm int,c}} + {\texttt n_{\rm cp,c}} +2^{d} {\texttt n_{\rm sw,c}})N_{\rm c}^*\right)kM,
\end{align}
where $N_{\rm u}^*$ and $N_{\rm c}^*$ are known. %\cdh{I don't understand the point of this last phrase.} 
For the simulations in Figure \ref{Bar2}, the values for the number of times each subroutine is performed is as follows.
\begin{table}[tbhp]
\footnotesize
\centering
\begin{tabular}{|c|c|c|c|c|c|c|c|c|c|}
\hline
${\texttt n_{\rm so}}$ & ${\texttt n_{\rm int}}$& ${\texttt n_{\rm cp}}$ & ${\texttt n_{\rm sw}}$ & ${\texttt n_{\rm int,u}}$ & ${\texttt n_{\rm cp,u}}$ & ${\texttt n_{\rm sw,u}}$ & ${\texttt n_{\rm int,c}}$ & ${\texttt n_{\rm cp,c}}$ & ${\texttt n_{\rm sw,c}}$\\
\hline
42 & 124 & 124 & 166 & 42 & 42 & 84 & 124 & 124 & 124\\
\hline
\end{tabular} 
\caption{Number of occurrences of subroutines used to compute solutions of the simulations in Figure \ref{Bar2}.}\label{App Table 2}
\end{table}

The memory predictions are easier to compute. Using the DG $\nsol{S}_4$ simulation as a reference, we measure the maximum memory expenditure during the run of the simulation. We assume that the majority of the memory expenditure is taken up by the largest arrays and Krylov vectors created by the GMRES solver in the code,  
%\cdh{what is a vector? Do you mean a Krylov vector?  need some context.} 
and we know ahead of time how many arrays or vectors are needed to run the simulation. 
These arrays are either used to hold various portions of the solution at each time step or are temporary arrays used in the GMRES solver. These arrays either scale with the number of angles and mesh cells or scale with just the mesh cells. The Krylov vectors are formed as part of the Krylov subspace used by the GMRES solver to solve \eqref{eq:hyFVDGmatvecBKrylov} and only scale with the number of mesh cells.
During the run of the code, we require 4 arrays of size $2^dN^*M$ and 4 arrays of size $2^dM$ to hold various forms of the solution and source at every time step. During the GMRES solver step a number of temporary arrays are created, one of size $2^dN^*M$ and 2 of size $2^dM$. Additionally as part of building the Krylov subspace, the $k$-th iteration of the GMRES solver 
%\cdh{solver for for what?} 
requires $k+1$ vectors of size $2^dM$.
% to construct the Krylov space \cdh{nothing is said about a Krylov space until now. context!}. 
The maximum iterations the solver took was 2 throughout all of the runs whose memory usage is shown in Figure \ref{Bar2}
%\cdh{what runs}
, so the code required an additional 3 vectors. The codes used eight bytes of memory ($7.63\times10^{-6}$ MB) for every entry in an array or Krylov vector and the product quadrature has $N^*=N^2$ ordinates. The computational domain uses $301\times301=M$ mesh cells and $d=2$. This leads to the following:
\begin{equation}
267.8 \text{ MB}= \left(5*2^dN^*M + 9*2^dM\right)*7.63\times10^{-6}\text{ MB} + x \implies x= 21.7\text{ MB}.
\end{equation}
This $x$ value is attributed to the overhead of the code and various other values that are held in memory that does not scale with $M$ or $N^*$. We assume that this value of $x$ is constant across simulations shown in Figure \ref{Bar2}.
%\cdh{which simulations?}. 
To predict the other values in Figure \ref{Bar2} we simply count all the total entries from all relevant arrays or vectors, multiply by 8 bytes, ($7.63*10^{-6}$ MB), and then add $x$. By relevant, we refer to arrays or vectors that scale in angles and mesh cells or just the mesh cells. The number of relevant arrays and vectors for each method is shown in Table \ref{App Table 3}, and for simplicity are all referred to as vectors.
\begin{table}[H]
\begin{tabular}{|c|c|}
\hline
Method & Relevant vectors\\
\hline
DG & 5 vectors of size $2^dN^*M$, 9 vectors of size $2^dM$\\
\hline
FV & 5 vectors of size $N^*M$, 9 vectors of size $M$\\
\hline
DG-DG & 4 vectors of size $2^dN_{\rm u}^*M$, 1 vector of size $2^dN_{\rm c}^*M$, 12 vectors of size $2^dM$\\
\hline
FV-DG & 4 vectors of size $N_{\rm u}^*M$, 1 vector of size $2^dN_{\rm c}^*M$, 12 vectors of size $2^dM$\\
\hline
\end{tabular}
\caption{Number and type of relevant vectors in each method.}
\label{App Table 3}
\end{table}
\section{Second-order finite volume reconstruction}\label{App2}
In this section, we specify the form of the reconstruction operator $\SRfull$ that is used for the calculations of Section \ref{sec:HyImp}; see \eqref{eq:FVbilin}.  As in Section \ref{sec:HyImp}, we assume a two-dimensional geometry using quadrilateral elements.    Let $\cT_h$ be a partition of $X\subset \bbR^2$ into $J^*\times K^*$ mesh cells. Let $C_{j,k}\in \cT_h$ be a quadrilateral with cell center $(x_j,y_k)$  and cell size $\Delta x\Delta y$ for all $1\le j\le J^*$, $1\le k\le K^*$. Let $N^*\in\bbN$ and let $\{\Omega_i\}_{i=1}^{N^*}\subset \sph$ where $\Omega_i:=(\Omega_{i,x},\Omega_{i,y},\Omega_{i,z})$. Let $f=[f_1,f_2,\ldots,f_{N^*}]^T\in (\cX_{h,0})^{N^*}$, where $f_i\in \cX_{h,0}$ for all $1\le i\le N^*$.  Denote the value of $f_i$ on cell $C_{j,k}$ as $f_{i,j,k}$. Then
\begin{equation}
\left.\left(\SRfull f\right)_i(x,y)\right|_{C_{j,k}}=f_{i,j,k} + s^x_{i,j,k}(x-x_j) + s^y_{i,j,k}(y-y_k),\ \forall\  (x,y)\in C_{j,k},
\end{equation}
where
\begin{equation}
s^x_{i,j,k}=\begin{cases}
\frac{f_{i,j,k}-f_{i,j-1,k}}{\Delta x}, & \Omega_{i,x}\ge 0,\ j > 1,\\
\frac{2(f_{i,j,k}-f_{i,j-1/2,k})}{\Delta x}, & \Omega_{i,x}\ge0,\ j = 1,\\
\frac{f_{i,j,k}-f_{i,j+1,k}}{\Delta x},& \Omega_{i,x}\le 0,\ j < J^*,\\
\frac{2(f_{i,j,k}-f_{i,j+1/2,k})}{\Delta x}, & \Omega_{i,x}\le 0,\ j = J^*,
\end{cases}\quad
%s^y_{i,j,k}=\begin{cases}
%\frac{f_{i,j,k}-f_{i,j,k-1}}{\Delta y},\ \Omega_{i,y}\ge 0,\ k > 1,\\
%\frac{2(f_{i,j,k}-f_{i,j,k-1/2})}{\Delta y}, \Omega_{i,y}\ge0,\ k = 1,\\
%\frac{f_{i,j,k}-f_{i,j,k+1}}{\Delta y},\ \Omega_{i,y}\le 0,\ k < K^*,\\
%\frac{2(f_{i,j,k}-f_{i,j,k+1/2})}{\Delta y}, \Omega_{i,y}\le 0,\ k = K^*,
%\end{cases}
\end{equation}
$s^y_{i,j,k}$ is defined similarly, the boundary terms are
\begin{subequations}
\begin{align}
f_{i,1/2,k}&=f_{{\rm b},i}(x_{1/2},y_k),\quad f_{i,J^*+1/2,k} = f_{{\rm b},i}(x_{J^*+1/2},y_k),\\ f_{i,j,1/2}&= f_{{\rm b},i}(x_{j},y_{1/2}),\quad f_{i,j,K^*+1/2} = f_{{\rm b},i}(x_{j},y_{K^*+1/2}),
\end{align}
\end{subequations} 
and we assume that the point-wise values of  $f_{{\rm b},i}$,  $1\le i\le N^*$, above are well-defined.

\end{appendices}

\bibliographystyle{siamplain}
\bibliography{./main}
\end{document}